\newcommand{\bfalpha}{\boldsymbol \alpha}
\newcommand{\bfbeta}{\boldsymbol \beta}
\newcommand{\bff}{\boldsymbol f}
\newcommand{\bfy}{\boldsymbol y}
\newcommand{\Dcal}{\mathcal{D}}
\newcommand{\Ucal}{\mathcal{U}}
\newcommand{\bfmu}{\boldsymbol \mu}
\newcommand{\bfe}{\boldsymbol e}
\newcommand{\bfp}{\boldsymbol p}
\newcommand{\bfq}{\boldsymbol q}
\newcommand{\bfr}{\boldsymbol r}
\newcommand{\bfs}{\boldsymbol s}
\newcommand{\bfu}{\boldsymbol u}
\newcommand{\bfC}{\boldsymbol C}
\newcommand{\bfF}{\boldsymbol F}
\newcommand{\bfP}{\boldsymbol P}
\newcommand{\bfR}{\boldsymbol R}
\newcommand{\bfU}{\boldsymbol U}
\newcommand{\bfQ}{\boldsymbol Q}
\newcommand{\bfS}{\boldsymbol S}
\newcommand{\winit}{w_{\text{init}}}
\newcommand{\win}{w}
\newcommand{\arate}{a}
\newcommand{\abas}{AADEIM}
\newcommand{\nh}{N}
\newcommand{\nr}{n}
\newcommand{\nab}{r} 
\newcommand{\nms}{m} 
\newcommand{\nz}{z} 
\newenvironment{keywords}%
   {\begin{trivlist}\item[]{\bfseries\sffamily Keywords:}\ }
   {\end{trivlist}}
\newtheorem{lemma}{Lemma}
\theoremstyle{definition}
\newtheorem{proposition}{Proposition}
\title{Model reduction for transport-dominated problems via online adaptive bases and adaptive sampling}
\author{Benjamin Peherstorfer\thanks{Courant Institute of Mathematical Sciences, New York University, New York, NY 10012. The work of Peherstorfer is supported in parts by the Air Force Center of Excellence on Multi-Fidelity Modeling of Rocket Combustor Dynamics, Award Number FA9550-17-1-0195.}}
\date{December 6, 2018 (revised June 9, 2020)}
\begin{document}

\maketitle

\begin{abstract}
This work presents a model reduction approach for problems with coherent structures that propagate over time such as convection-dominated flows and wave-type phenomena. Traditional model reduction methods have difficulties with these transport-dominated problems because propagating coherent structures typically introduce high-dimensional features that require high-dimensional approximation spaces. The approach proposed in this work exploits the locality in space and time of propagating coherent structures to derive efficient reduced models. Full-model solutions are approximated locally in time via local reduced spaces that are adapted with basis updates during time stepping. The basis updates are derived from querying the full model at a few selected spatial coordinates. A core contribution of this work is an adaptive sampling scheme for selecting at which components to query the full model to compute basis updates. The presented analysis shows that, in probability, the more local the coherent structure is in space, the fewer full-model samples are required to adapt the reduced basis with the proposed adaptive sampling scheme. Numerical results on benchmark examples with interacting wave-type structures and time-varying transport speeds and on a model combustor of a single-element rocket engine demonstrate the wide applicability of the proposed approach and runtime speedups of up to one order of magnitude compared to full models and traditional reduced models.
\end{abstract}

\begin{keywords}
	model reduction, transport-dominated problems, empirical interpolation, nonlinear model reduction, localized model reduction, online adaptive model reduction, sparse sampling, proper orthogonal decomposition
\end{keywords}

\section{Introduction}
\label{sec:Intro}
Reduced models of large-scale systems of equations are typically constructed in a one-time high-cost offline phase and then used in an online phase to repeatedly compute accurate approximations of the full-model solutions with significantly lower costs. In projection-based model reduction \cite{RozzaPateraSurvey,AntBG10,SIREVSurvey}, a low-dimensional space is constructed that approximates well the high-dimensional solution space of the full model. Then, the full-model equations are projected onto the low-dimensional space and reduced solutions are computed by solving the projected equations. However, solutions of full models that describe transport-dominated behavior, e.g., convection-dominated flows, wave-type phenomena, shock propagation, typically exhibit high-dimensional features, which means that no low-dimensional space (``linear approximation'') exists in which the full-model solutions can be approximated well; the Kolmogorov n-width is high. Thus, traditional model reduction methods fail for transport-dominated problems \cite{OHLBERGER2013901,dahmen_plesken_welper_2014,Tommasao}. In this work, we exploit that transport-dominated problems typically have a rich structure that is local in nature, which we leverage to derive efficient reduced models.

Model reduction projects the full-model equations on a low-dimensional---reduced--space that is spanned by a set of basis vectors. There are many methods for constructing reduced spaces, including proper orthogonal decomposition (POD) \cite{PODFluid,SirovichMethodOfSnapshots}, balanced truncation \cite{moore_principal_1981,mullis_synthesis_1976}, the reduced basis method \cite{prudhomme_reliable_2001,veroy_posteriori_2003,ROZZA20071244,refId0Haasdonk,RozzaPateraSurvey}, and interpolatory model reduction methods \cite{gugercin_2008,AntBG10}. In the following, we will be mostly concerned with full-model equations that are nonlinear in the states, where projecting the full-model equations onto a reduced space is typically insufficient to obtain a reduced model that is faster to solve than the full model, because the nonlinear terms entail computations that scale with the dimension of the full-model solution space. One remedy is empirical interpolation, which approximates nonlinear terms by evaluating them at a few, carefully selected interpolation points and approximating all other components via interpolation in a low-dimensional space  \cite{barrault_empirical_2004,grepl_efficient_2007,doi:10.1137/10081157X}. We will build on the discrete empirical interpolation method (DEIM) \cite{deim2010,QDEIM}, which is the discrete counterpart of empirical interpolation. Note that there are other techniques for nonlinear model reduction, e.g., missing point estimation \cite{4668528} and the Gauss-Newton with approximated tensors (GNAT) method \cite{GNAT}. All these methods build on the assumption that there is a low-dimensional space in which the full-model solutions can be approximated well, which is violated in case of transport-dominated problems. However, the solutions of transport-dominated problems typically are low dimensional if considered locally in time, which we exploit by approximating the full-model solutions in local low-dimensional spaces that are adapted via low-rank basis updates over time \cite{Peherstorfer15aDEIM,ZPW17SIMAXManifold}. The basis updates are derived by querying the full model at selected points in the spatial domain. We derive an adaptive sampling scheme that selects where to query the full model to compute the basis updates. An analysis is developed that shows that if the reduced-model residual is local in the spatial domain---which we observe for transport-dominated problems---then only few sampling points are necessary to adapt the local spaces with the proposed adaptive sampling scheme. Even though online operations of the proposed approach incur costs that depend on the dimension of the full-model states, the dimension of the adapted spaces can be selected lower than with static reduced models, which makes the proposed approach computationally efficient and leads to speedups of up to one order of magnitude in our numerical experiments. We summarize the key contributions of this work as follows:
\begin{itemize}
\item Providing an analysis that relates the locality of the residual in the spatial domain to the number of sampling points required for adapting reduced bases with the proposed approach.
\item Deriving a sampling scheme to select which components of the full model to sample for deriving basis updates.
\item Demonstrating the efficiency of the proposed approach on numerical examples with time-varying coefficients and nonlinear dynamics by reporting speedups of up to one order of magnitude compared to full-model solutions and traditional, static reduced models.
\end{itemize}

For reviewing the literature on model reduction for transport-dominated problems, we broadly distinguish between three lines of research. Literature that we categorize into the first line of research is primarily interested in the stability of reduced models of transport-dominated phenomena. The work \cite{dahmen_plesken_welper_2014} formulates the greedy basis construction methods, developed in the reduced basis community \cite{prudhomme_reliable_2001,veroy_posteriori_2003}, in special norms that are better suited for transport-dominated problems to obtain stable approximations. The authors of \cite{URBAN2012203,doi:10.1142/S0218202514500110} consider time-space discretizations to guarantee stability of the reduced models. Then, there is a large body of work on closure modeling \cite{Wang12PODclosureComp,2017arXiv171003569F,doi:10.1137/18M1177263,PARISH2016758,Iollo2000,ROWLEY2004115} where methods for stabilizing reduced models have been developed. Into the second line of research, we categorize work that transforms the full model to recover a low-rank structure that can then be exploited with a reduced space. The work on transformations in the model reduction community seems to have originated from \cite{OHLBERGER2013901}, where the transport dynamics are separated from the rest of the dynamics via freezing. In \cite{RimLeVeque,Rim2}, transport maps are constructed that reverse the transport and so recover a low-rank structure. Approaches have been developed that aim to find transformations numerically via, e.g., optimization \cite{Cagniart2019}. Shifted POD \cite{Reiss} recovers the shift due to the transport and applies POD after having reversed the shift. The shift operator introduced in \cite{Reiss} is time dependent and separates different transports to be applicable to, e.g., multiple waves traveling at different wave speeds. In \cite{2017arXiv171011481W,WelperInterpolate}, snapshots are transformed to better align them before the reduced bases are constructed. The third line of research on model reduction for transport-dominated problems constructs low-dimensional spaces that explicitly target transport-dominated problems.
There are approaches that exploit local structure, such as the approach introduced in \cite{Tommasao} that builds on the spatial locality of shock fronts. The work \cite{Abgrall2016} constructs bases via $L_1$ optimization and uses a dictionary approach. In \cite{doi:10.1002/nme.4800}, a special type of adaptation is developed that enriches the reduced space during the online phase if an error indicator signals a high error.
In \cite{PhysRevE.89.022923}, reduced bases are adapted over time via an auxiliary equation that describes the dynamics of the bases. Under certain conditions, the approach can be seen as an approximation of Lax pairs discussed in \cite{GERBEAU2014246}. Dynamic low-rank approximation \cite{Koch2007,SAPSIS20092347,NobileDO,Musharbash:231216,MUSHARBASH2018135} adapts bases in time similar to the proposed approach here; however, the bases in dynamic low-rank approximation approaches are constructed so that they are valid for all parameters in a given parameter domain, which is problematic if the solution manifold contains high-dimensional features because the propagating coherent structure depends on parameters, as in, e.g., \cite[Example~2.5]{Tommasao}. In contrast, the approach developed in this work is local in the parameter domain, additionally to being local in time and space. Furthermore, locality in the parameter domain enables deriving basis updates from only few samples of the full model.

This work is organized as follows. Section~\ref{sec:Prelim} sets up the problem and gives the problem formulation. Section~\ref{sec:ABAS} demonstrates the local structure in transport-dominated problems and proposes an approach to adaptively sample the full model to construct basis updates. Numerical results in Section~\ref{sec:NumExp} on benchmark examples and on a model combustor of a single-element rocket engine demonstrate that the proposed approach achieves significant speedups and is applicable to a wide range of problems. In particular, the numerical results indicate that the proposed approach faithfully approximates interactions between propagating coherent structures traveling at different speeds. Concluding remarks are provided in Section~\ref{sec:Conc}.

\section{Preliminaries}
\label{sec:Prelim}
We briefly discuss model reduction with empirical interpolation in Section~\ref{sec:Prelim:MOR} and then demonstrate on a toy example  in Section~\ref{sec:Prelim:ProblemFormulation} why these traditional model reduction methods fail for problems exhibiting transport-dominated phenomena.

\subsection{Model reduction with empirical interpolation}
\label{sec:Prelim:MOR}
Consider the system of discretized equations that is obtained after discretizing a partial differential equation (PDE) in space and time
\begin{equation}
\bfq_{k}(\bfmu) = \bff(\bfq_{k+1}(\bfmu), \bfmu)\,,\qquad k = 1, \dots, K\,,
\label{eq:Prelim:FOMProblem}
\end{equation}
where $\bfq_k(\bfmu) \in \mathbb{R}^{\nh}$ is the $\nh$-dimensional state at time $k$ and parameter $\bfmu \in \Dcal$, with parameter domain $\Dcal$. The number of time steps is $K \in \mathbb{N}$. The function $\bff: \mathbb{R}^{\nh} \times \Dcal \to \mathbb{R}^{\nh}$ describes the operators of the discretized PDE and typically is nonlinear in the state $\bfq_{k + 1}(\bfmu)$. The time discretization is implicit in time, which means that at each time step $k = 1, \dots, K$, a potentially nonlinear, large-scale system of equations has to be solved, e.g., with Newton's method. The formulation of the full model \eqref{eq:Prelim:FOMProblem} is different from the formulation in \cite{deim2010} where the linear and nonlinear terms of the equations are treated separately.

To derive a reduced model of the full model \eqref{eq:Prelim:FOMProblem} with empirical interpolation \cite{barrault_empirical_2004,deim2010,QDEIM}, consider the trajectory at parameter $\bfmu \in \Dcal$
\[
\bfQ(\bfmu) = [\bfq_1(\bfmu), \dots, \bfq_K(\bfmu)] \in \mathbb{R}^{\nh \times K}\,,
\]
which is the matrix with the states $\bfq_1(\bfmu), \dots, \bfq_K(\bfmu)$ as columns. Let the columns of $\bfU = [\bfu_1, \dots, \bfu_{\nr}] \in \mathbb{R}^{\nh \times \nr}$ be the POD basis of dimension $\nr \ll \nh$ obtained from the snapshot matrix
\[
\bfQ = [\bfQ(\bfmu_1), \dots, \bfQ(\bfmu_M)] \in \mathbb{R}^{\nh \times MK}\,,
\]
with parameters $\bfmu_1, \dots, \bfmu_M \in \Dcal$. The space spanned by the columns of $\bfU$ is denoted as $\Ucal \subset \mathbb{R}^{\nh}$ and is a subspace of $\mathbb{R}^{\nh}$. The critical assumption of traditional model reduction is that the singular values of $\bfQ$ decay fast so that only few basis vectors are necessary to approximate well the columns of $\bfQ$ in the corresponding space $\Ucal$. Following QDEIM, introduced in \cite{QDEIM}, select the interpolation points $p_1, \dots, p_{\nr} \in \{1, \dots, \nh\}$ and define the corresponding interpolation points matrix $\bfP = [\bfe_{p_1}, \dots, \bfe_{p_{\nr}}] \in \mathbb{R}^{\nh \times \nr}$, where $\bfe_{p_i} \in \mathbb{R}^{\nh}$ is the $p_i$-th canonical unit vector with entry 1 at the $p_i$-th component and entry 0 at all other components.
Define
\[
\tilde{\bff}(\bfq(\bfmu); \bfmu) = (\bfP^T\bfU)^{-1}\bfP^T\bff(\bfq(\bfmu); \bfmu)\,,
\]
so that $\bfU\tilde{\bff}(\bfq(\bfmu); \bfmu)$ is the DEIM approximation of $\bff(\bfq(\bfmu); \bfmu)$ at state $\bfq(\bfmu) \in \mathbb{R}^{\nh}$ and parameter $\bfmu \in \Dcal$. Note that computing $\bfP^T\bff(\bfq(\bfmu); \bfmu)$ typically requires evaluating $\bff(\bfq(\bfmu); \bfmu)$ at the $\nr$ interpolation points $p_1, \dots, p_{\nr}$ only, see \cite{barrault_empirical_2004,deim2010,QDEIM}. We overload the notation $\tilde{\bff}$ in the following so that if we have a reduced state $\tilde{\bfq}(\bfmu) \in \mathbb{R}^{\nr}$, then $\tilde{\bff}(\tilde{\bfq}(\bfmu); \bfmu) = (\bfP^T\bfU)^{-1}\bfP^T\bff(\bfU\tilde{\bfq}(\bfmu); \bfmu)$. The reduced model corresponding to $\tilde{\bff}$ is
\begin{equation}
\tilde{\bfq}_k(\bfmu) = \tilde{\bff}(\tilde{\bfq}_{k + 1}(\bfmu); \bfmu)\,,\qquad k = 1, \dots, K\,,
\label{eq:Prelim:RedProblem}
\end{equation}
with the reduced trajectory $\tilde{\bfQ}(\bfmu) = [\tilde{\bfq}_1(\bfmu), \dots, \tilde{\bfq}_K(\bfmu)] \in \mathbb{R}^{\nr \times K}$. Note that we approximate the state and the nonlinear function in the same space $\Ucal$, which is in contrast to the original use of DEIM in \cite{deim2010} and similar to model reduction via missing point estimation \cite{4668528}. Once a reduced model \eqref{eq:Prelim:RedProblem} is constructed in the offline phase, it is solved in the online phase. The one-time high costs of constructing the reduced model are compensated by approximating the full-model solutions with reduced-model solutions for a large number of parameters online, see, e.g., \cite{RozzaPateraSurvey,AntBG10,SIREVSurvey,PWG17MultiSurvey} for details on the wide range of outer-loop and many-query applications where such an offline/online splitting is beneficial.

\begin{figure}
\begin{tabular}{cc}
{\LARGE\resizebox{0.45\columnwidth}{!}{\input{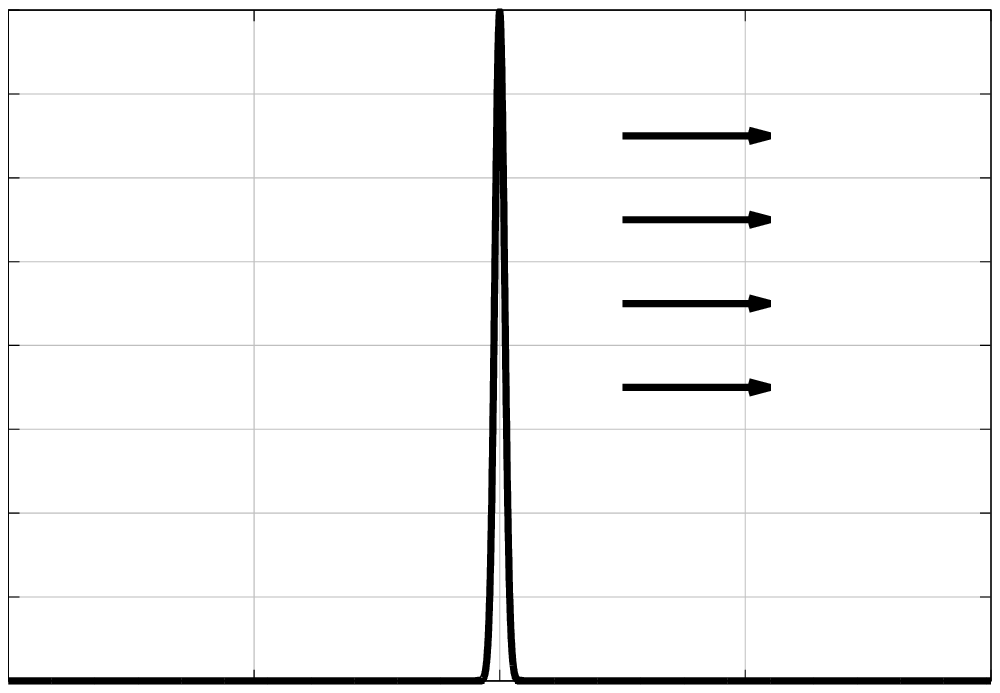}}} & {\LARGE\resizebox{0.45\columnwidth}{!}{\input{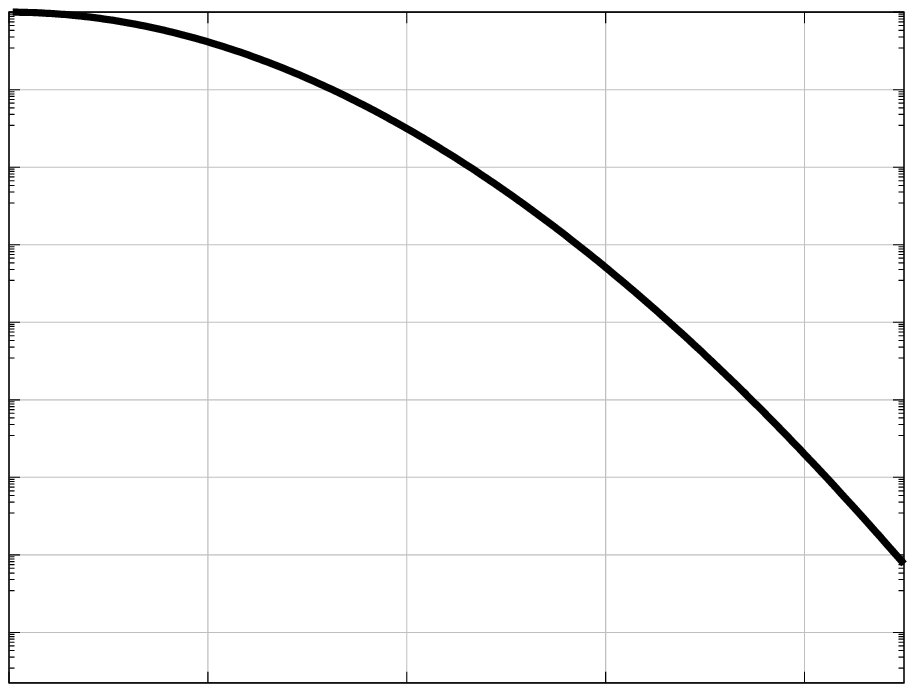}}}\\
{\scriptsize (a) initial condition} & {\scriptsize (b) singular values}
\end{tabular}
\caption{Advection equation: Plot (a) shows the initial condition and the direction of the transport. Plot (b) shows the slow decay of the singular values of the snapshots.}
\label{fig:AdVec:Motivation}
\end{figure}

\subsection{Problem formulation}
\label{sec:Prelim:ProblemFormulation}
It has been observed that states of problems with transport-dominated behavior can require DEIM (reduced) spaces $\Ucal$ with high dimensions, see, e.g., \cite{OHLBERGER2013901,dahmen_plesken_welper_2014,Tommasao}. The following toy example illustrates this by demonstrating the slow decay of the singular values of the snapshot matrix of solutions of the advection equation. Let $\Omega = (-1, 1) \subset \mathbb{R}$ be the spatial domain and consider the advection equation
\begin{equation}
\partial_tq(x, t) + \mu\partial_xq(x,t) = 0\,,\qquad x \in \Omega\,,
\label{eq:Prelim:AdVec}
\end{equation}
with periodic boundary conditions $q(1, t) = q(-1, t)$ and time $t \in [0, \infty)$. Set the initial condition to
\[
q(x, 0) = \frac{1}{\sqrt{\pi 0.02}}\exp\left(-\frac{x^2}{0.0002}\right)\,,
\]
which is the probability density function of a normal random variable with standard deviation 0.01 and mean 0, see Figure~\ref{fig:AdVec:Motivation}a. Discretize \eqref{eq:Prelim:AdVec} with a second-order upwind scheme and $\nh = 8192$ inner grid points in the spatial domain and time step size $\delta t = 10^{-6}$ and end time $T = 0.08$. The singular values of the trajectory $\bfQ(\mu)$ are plotted in Figure~\ref{fig:AdVec:Motivation}b for $\mu = 10$. According to the decay of the singular values, a DEIM space of dimension $\nr \geq 200$ is necessary to approximate the trajectory $\bfQ(\mu)$ with a projection error below $10^{-15}$ in the Euclidean norm, which is a rather slow decay compared to the fast decay observed in many other problems \cite{RozzaPateraSurvey,SIREVSurvey}. This example demonstrates that efficient reduced models of transport-dominated problems will have to exploit structure beyond the classical low-rank structure that traditional model reduction methods rely on.

\begin{figure}
\begin{tabular}{cc}
{\LARGE\resizebox{0.45\columnwidth}{!}{\input{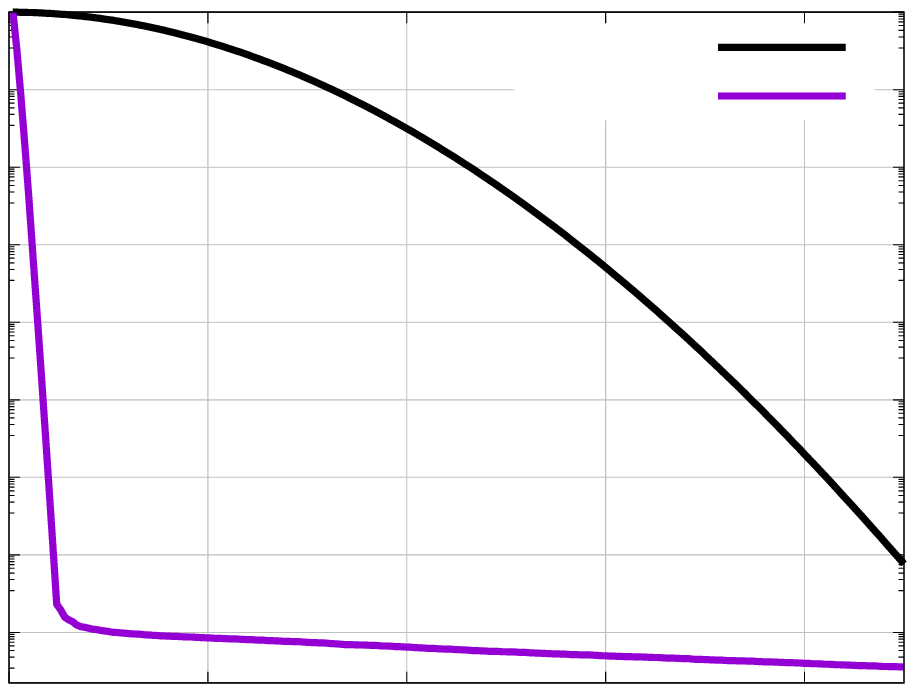}}} & {\LARGE\resizebox{0.45\columnwidth}{!}{\input{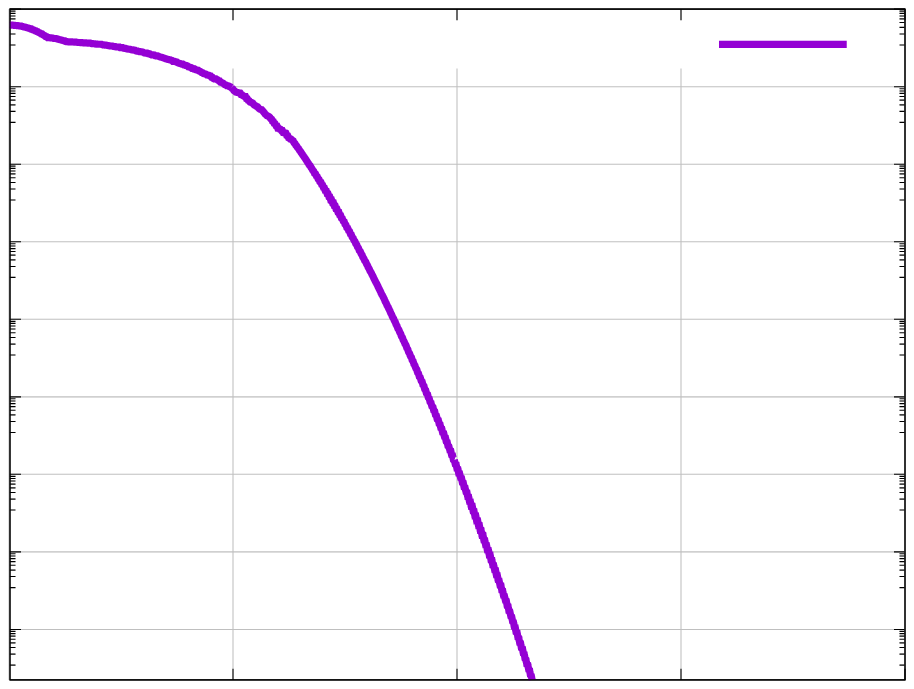}}}\\
{\scriptsize (a) local low-rank structure} & {\scriptsize (b) decay of residual (local coherence)}
\end{tabular}
\caption{Advection equation: The plot in (a) shows that the singular values of a local trajectory decay orders of magnitude faster than the singular values of a trajectory that is global in time in this example, which we exploit via online adaptive basis updates. Plot (b) indicates that the squared residual of DEIM approximations of states of transport-dominated problems decays rapidly, where the index on the x-axis refers to the component of the sorted squared residual. We will show that this fast decay of the residual means that basis updates can be derived from only few components---samples---of the residual.}
\label{fig:AdVec:SingularValues}
\end{figure}

\section{Exploiting local structure via online adaptive basis updates}
\label{sec:ABAS}
We propose \abas{} (adaptive bases and adaptive sampling) to exploit local structure to construct online adaptive reduced models of full models that exhibit transport-dominated behavior. We focus on two types of local structure: Local low-rankness that we exploit via online adaptive bases and local coherence that enables updating the bases from few samples of the full model. Section~\ref{sec:ABAS:LocalStructure} describes local low-rankness and local coherence in more detail. Section~\ref{sec:ABAS:LocalLowRank} discusses the adaptation of the DEIM basis to exploit local low-rank structure and Section~\ref{sec:ABAS:LocalCoherence} shows that only few samples are necessary to derive basis updates if the adapted DEIM spaces are locally coherent. Algorithm~\ref{alg:ABAS} in Section~\ref{sec:ABAS:Algorithm} summarizes the proposed approach and provides practical considerations.

In most of this section, we drop the dependency on the parameter $\bfmu$ of the function $\bff$, the state $\bfq_k$ at time step $k$, and the trajectory $\bfQ$, as well as their reduced counterparts. Parametrization of the proposed reduced models based on \abas{} is discussed in Section~\ref{sec:ABAS:Algorithm}.

\subsection{Local structure in transport-dominated problems}
\label{sec:ABAS:LocalStructure}
Consider the toy example introduced in Section~\ref{sec:Prelim:ProblemFormulation}. Let $\win \in \mathbb{N}$ be a window size and consider the local trajectory $\bfQ_k = [\bfq_{k - \win + 1}, \dots, \bfq_k] \in \mathbb{R}^{\nh \times \win}$ at time step $k$, which consists of the $\win$ states from time steps $k - \win + 1$ to time step $k$. Figure~\ref{fig:AdVec:SingularValues}a compares the singular values of the local trajectory $\bfQ_k$ to the singular values of the global trajectory $\bfQ$ for $\win = 500$ and $k = 1$. The results indicate that the singular values of the local trajectory $\bfQ_k$ decay orders of magnitude faster than the singular values of the global trajectory $\bfQ$. We call this behavior---that the singular values of local trajectories decay fast while the singular values of the global trajectory decay slowly---local low-rank structure. By adapting the DEIM spaces, we will exploit this local low-rank structure in Section~\ref{sec:ABAS:LocalLowRank}. Similar observations about local low-rank structure are exploited in \cite{Koch2007,SAPSIS20092347,NobileDO,Musharbash:231216,MUSHARBASH2018135,GERBEAU2014246,LDEIM,pehersto15dynamic}, cf.~Section~\ref{sec:Intro}. In Appendix~\ref{sec:Appx:Example}, we analyze analytically an example to demonstrate its low-rank structure.

Let us now consider the local DEIM space $\Ucal_k$ of dimension $\nr = 3$ and the corresponding local interpolation points matrix $\bfP_k$ obtained from the local trajectory $\bfQ_k$. The residual of approximating the state $\bfq_k$ at time step $k = 500$ with DEIM in $\Ucal_k$ is
\[
\bfr_k = \bfq_k - \bfU_k(\bfP_k^T\bfU_k)^{-1}\bfP_k^T\bfq_k.
\]
Figure~\ref{fig:AdVec:SingularValues}b shows the decay of the sorted squared components of $\bfr_k$. The squared residual decays fast in this example, which means that the residual is localized in a few components only. Thus, the residual is low at most of the $\nh = 8192$ components. Intuitively, such a fast decay of the residual means that the basis matrix $\bfU_k$ of the local DEIM space needs to be corrected at a few components only. We will show that such a fast decay of the residual can be the result of a local coherence structure of the local DEIM spaces, which we will make precise and will exploit with adaptive sampling in Section~\ref{sec:ABAS:LocalCoherence}.

\subsection{Exploiting local low-rank structure: Basis updates}
\label{sec:ABAS:LocalLowRank}
To exploit local low-rank structure as described in Section~\ref{sec:ABAS:LocalStructure}, we adapt the DEIM spaces and the DEIM interpolation points during the time steps $k = 1, \dots, K$ in the online phase. The adaptation is initialized with the DEIM basis matrix $\bfU_1$ and interpolation points matrix $\bfP_1$ at time step $k = 1$. Then, at each time step $k = 1, \dots, K$, the DEIM basis matrix $\bfU_k$ is adapted via an additive low-rank update to $\bfU_{k + 1}$. The update is based on ADEIM \cite{Peherstorfer15aDEIM}. The interpolation points matrix $\bfP_{k + 1}$ is derived with QDEIM from the adapted basis matrix $\bfU_{k + 1}$. In the following description, the DEIM interpolant is adapted at each time step $k = 1, \dots, K$ for ease of exposition, even though all of the following directly applies to situations where the adaptation is performed at selected time steps only, e.g., every other time step or via a criterion that decides adaptively when to update the DEIM interpolant.

\subsubsection{Adaptation with ADEIM}
Let $k$ be the current time step and $\bfU_k$ and $\bfP_k$ the DEIM basis matrix and the DEIM interpolation points matrix, respectively. Consider the matrix $\bfF_k \in \mathbb{R}^{\nh \times \win}$ and the coefficient matrix $\bfC_k = (\bfP_k^T\bfU_k)^{-1}\bfP_k^T\bfF_k$. The residual of the DEIM approximation of the columns of $\bfF_k$ is $\bfR_k = \bfU_k\bfC_k - \bfF_k$. Let now $\bfS_k = [\bfe_{s_1^{(k)}}, \dots, \bfe_{s_{\nms}^{(k)}}] \in \mathbb{R}^{\nh \times \nms}$ be the sampling points matrix corresponding to the $s_1^{(k)}, \dots, s_{\nms}^{(k)} \in \{1, \dots, \nh\}$ samplings points with $\nms > \nr$. ADEIM \cite{Peherstorfer15aDEIM} adapts the DEIM basis matrix $\bfU_k$ to $\bfU_{k + 1}$ with a low-rank update $\bfalpha_k\bfbeta_k^T \in \mathbb{R}^{\nh \times \nr}$
\[
\bfU_{k + 1} = \bfU_k + \bfalpha_k\bfbeta_k^T\,,
\]
with $\bfalpha_k \in \mathbb{R}^{\nh \times \nab}, \bfbeta_k \in \mathbb{R}^{\nr \times \nab}$ with rank $\nab \in \mathbb{N}$. The ADEIM update $\bfalpha_k\bfbeta_k^T$ is the rank-$\nab$ matrix that minimizes the residual $\bfU_{k + 1}\bfC_k - \bfF_k$ at the sampling points $\bfS_k$ in the Frobenius norm, which means that the ADEIM update $\bfalpha_k\bfbeta_k^T$ minimizes
\begin{equation}
\|\bfS_k^T\left((\bfU_k + \bfalpha_k\bfbeta_k^T)\bfC_k - \bfF_k\right)\|_F^2\,,
\label{eq:ABAS:Basis:MinObj}
\end{equation}
see \cite{Peherstorfer15aDEIM} for details on how to compute $\bfalpha_k\bfbeta_k^T$ and the computational costs of computing the update. Minimizing \eqref{eq:ABAS:Basis:MinObj} with respect to the update $\bfalpha_k\bfbeta_k^T$ keeps the coefficient matrix $\bfC_k$ fixed, which avoids introducing nonlinear interactions between $\bfC_{k + 1}$ and $\bfU_{k + 1}$ and so allows an efficient computation of the update $\bfalpha_k\bfbeta_k^T$ as described in \cite{Peherstorfer15aDEIM}. Note, first, that $\|\bfS_k^T(\bfU_{k + 1}\bfC_{k + 1} - \bfF_k)\|_F^2$ with coefficient matrix $\bfC_{k + 1}$ is upper bounded by \eqref{eq:ABAS:Basis:MinObj}, which means that by minimizing \eqref{eq:ABAS:Basis:MinObj} one minimizes an upper bound of the error that takes the updated coefficient matrix $\bfC_{k + 1}$ into account. Second, the basis update is performed many times (e.g., every other time step), and so the overall approach alternates between updating the basis by minimizing \eqref{eq:ABAS:Basis:MinObj} and updating the coefficient matrix when making a time step, which requires computing $\bfC_{k+1}$ corresponding to the updated basis matrix $\bfU_{k + 1}$; we refer to \cite{Peherstorfer15aDEIM} for details.

Critical for the adaptation is the matrix $\bfF_k$, because ADEIM adapts the space $\Ucal_k$ such that the residual of approximating the columns of $\bfF_k$ is minimized at the sampling points. A potentially good choice for the columns of $\bfF_k$ would be the states $\bfq_{k - \win + 1}, \dots, \bfq_{k}$ of the full model at time steps $k - \win + 1, \dots, k$; however, the states of the full model are unavailable because their availability would mean the full model has been solved. Instead, we set the columns of $\bfF_k$ as follows. Let $\breve{\bfS}_k \in \mathbb{R}^{\nh \times (\nh - \nms)}$ be the complementary sampling points matrix derived from the points $\{1, \dots, \nh\} \setminus \{s_1^{(k)}, \dots, s_{\nms}^{(k)}\}$ that have not been selected as sampling points. Let further $\hat{\bfq}_k \in \mathbb{R}^{\nh}$ be the vector with
\begin{equation}
\bfS_k^T\hat{\bfq}_k = \bfS_k^T\bff(\bfU_k\tilde{\bfq}_k)\,,\qquad \breve{\bfS}_k^T\hat{\bfq}_k = \breve{\bfS}_k^T\bfU_k(\bfP_k^T\bfU_k)^{-1}\bfP_k^T\bff(\bfU_k\tilde{\bfq}_k)\,,
\label{eq:ABAS:Basis:FChoice}
\end{equation}
which means that the components of $\hat{\bfq}_k$ corresponding to the sampling points in $\bfS_k$ are equal to the components of $\bff(\bfU_k\tilde{\bfq}_k)$ and all other components are approximated via DEIM given by $\bfU_k$ and $\bfP_k$. Note that $\tilde{\bfq}_k$ is the reduced state at time $k$ and that $\bff$ is the function that defines the full model \eqref{eq:Prelim:FOMProblem}. The matrix $\bfF_k$ that we use in the following for adaptation is
\[
\bfF_k = [\hat{\bfq}_{k - \win + 1}, \dots, \hat{\bfq}_k]\,.
\]
The vectors $\hat{\bfq}_{k - \win + 1}, \dots, \hat{\bfq}_k$ serve as surrogates of the full-model states $\bfq_{k - \win}, \dots, \bfq_{k - 1}$ to which the DEIM space is adapted, because the full-model states $\bfq_{k + 1}$ satisfy $\bfq_k = \bff(\bfq_{k + 1})$ for $k = 1, \dots, K$, see equation \eqref{eq:Prelim:FOMProblem}.

\subsubsection{Analysis of the error of the adapted space}
\label{sec:AADEIM:ErrorAnalysisUpdate}
We now provide an analysis of the ADEIM adaptation. Let $\Ucal$ and $\bar{\Ucal}$ be two $\nr$-dimensional subspaces of $\mathbb{R}^{\nh}$. We measure the distance between $\Ucal$ and $\bar{\Ucal}$ as
\begin{equation}
d(\bar{\Ucal}, \Ucal) = \|\bar{\bfU} - \bfU\bfU^T\bar{\bfU}\|_F^2\,,
\label{eq:ADEIM:AnalysisError}
\end{equation}
where $\bfU$ and $\bar{\bfU}$ are orthonormal basis matrices of $\Ucal$ and $\bar{\Ucal}$, respectively. The distance $d(\bar{\Ucal}, \Ucal)$ is symmetric and invariant under orthogonal basis transformations, which is true because  $d(\bar{\Ucal}, \Ucal) = \nr - \|\bfU^T\bar{\bfU}\|_F^2$ holds. The $\nr$-dimensional subspace of $\mathbb{R}^{\nh}$ to which we want to adapt at iteration $k$ is denoted as $\bar{\Ucal}_{k + 1}$ and the adapted space is $\Ucal_{k + 1}$. The following lemma quantifies the reduction of the residual after an ADEIM update and establishes Proposition~\ref{prop:Convergence} that bounds the error $d(\bar{\Ucal}_{k + 1}, \Ucal_{k + 1})$ of the adapted space $\Ucal_{k + 1}$ with respect to the space $\bar{\Ucal}_{k + 1}$.

\begin{lemma}
Let $\bfP^T_k\bfF_k$ have rank $\nr$ and let $\bfC_k = (\bfP^T_k\bfU_k)^{-1}\bfP_k^T\bfF_k$ be the coefficient matrix and $\bfR_k = \bfU_k\bfC_k - \bfF_k$ the residual matrix. Let further $\bfS_k$ be the sampling points matrix and $\bfU_{k + 1} = \bfU_k + \bfalpha_k\bfbeta_k^T$ the adapted basis matrix of the adapted space $\Ucal_{k + 1}$. Let $\bar{\nab}$ be the rank of $\bfS_k^T\bfR_k$ and let $\nab \in \mathbb{N}$ with $\nab \leq \bar{\nab}$ be the rank of the update $\bfalpha_k\bfbeta_k^T$. Then,
\[
\left\|\bfS^T_k\left(\bfU_{k + 1}\bfC_k - \bfF_k\right)\right\|_F^2 = \|\bfS^T_k\bfR_k\|_F^2 - \sum_{i = 1}^r \sigma_i^2\,,
\]
where $\sigma_1 \geq \sigma_2 \geq \dots \geq \sigma_{\bar{\nab}} > 0$ are the singular values of $\bfS_k^T\bfR_k$.
\label{lm:ADEIM:SingValReduction}
\end{lemma}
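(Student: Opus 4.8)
The plan is to recast the ADEIM adaptation as a single structured low-rank approximation problem and then read off the residual reduction from an SVD. First I would substitute the update $\bfU_{k+1} = \bfU_k + \bfalpha_k\bfbeta_k^T$ into \eqref{eq:ABAS:Basis:MinObj} and use $\bfR_k = \bfU_k\bfC_k - \bfF_k$ to write
\[
\bfS_k^T\left(\bfU_{k+1}\bfC_k - \bfF_k\right) = \bfS_k^T\bfR_k + \left(\bfS_k^T\bfalpha_k\right)\left(\bfbeta_k^T\bfC_k\right),
\]
so that minimizing the objective is the same as minimizing $\|\bfS_k^T\bfR_k + \bfS_k^T\bfalpha_k\bfbeta_k^T\bfC_k\|_F^2$. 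Two observations drive the argument. The objective depends on $\bfalpha_k$ only through the $\nms\times\nab$ block $\bfS_k^T\bfalpha_k$, which is unconstrained because $\bfS_k^T$ merely selects rows. And, since $\bfP_k^T\bfF_k$ has rank $\nr$, the matrix $\bfC_k = (\bfP_k^T\bfU_k)^{-1}\bfP_k^T\bfF_k$ has full row rank $\nr$, so $\bfbeta_k^T\bfC_k$ can be made equal to any $\nab\times\win$ matrix whose rows lie in the row space of $\bfC_k$.

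Next I would optimize in two stages. Fixing $\bfbeta_k$ and writing $\bfW = \bfbeta_k^T\bfC_k$, the inner minimization over the free factor $\bfS_k^T\bfalpha_k$ is a linear least-squares problem with minimizer $\bfS_k^T\bfalpha_k = -\bfS_k^T\bfR_k\bfW^{+}$; substituting back gives the residual $\|\bfS_k^T\bfR_k(\bfI - \bfW^{+}\bfW)\|_F^2 = \|\bfS_k^T\bfR_k\|_F^2 - \|\bfS_k^T\bfR_k\boldsymbol{\Pi}_{\bfW}\|_F^2$, where $\boldsymbol{\Pi}_{\bfW} = \bfW^{+}\bfW$ is the orthogonal projector onto the row space of $\bfW$. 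The outer problem is then to maximize $\|\bfS_k^T\bfR_k\boldsymbol{\Pi}_{\bfW}\|_F^2$ over all projectors onto at most $\nab$-dimensional subspaces of $\mathrm{rowspace}(\bfC_k)$. By the Schmidt--Mirsky/Eckart--Young theorem this maximum equals the sum of the $\nab$ largest squared singular values of the relevant matrix, attained by the subspace spanned by the corresponding dominant right singular vectors, which yields a reduction of $\sum_{i=1}^{\nab}\sigma_i^2$ and hence the claimed identity.

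The step I expect to be the crux is this last one: showing that forcing the right factor into $\mathrm{rowspace}(\bfC_k)$ costs nothing, so that the constrained maximum coincides with the unconstrained best rank-$\nab$ approximation of $\bfS_k^T\bfR_k$ and the $\sigma_i$ are the singular values of $\bfS_k^T\bfR_k$ itself rather than of the projected surrogate $\bfS_k^T\bfR_k\boldsymbol{\Pi}_{\bfC_k}$. Concretely, I must verify that the dominant $\nab$-dimensional right singular subspace of $\bfS_k^T\bfR_k$ lies in $\mathrm{rowspace}(\bfC_k) = \mathrm{rowspace}(\bfP_k^T\bfF_k)$. This is where the hypotheses have to be used most carefully --- the rank-$\nr$ condition on $\bfP_k^T\bfF_k$, the interpolation identity $\bfP_k^T\bfR_k = 0$, and the precise construction of $\bfF_k$ (so that the rows of $\bfR_k$ selected by $\bfS_k$ are spanned by the rows of $\bfC_k$) --- to establish the containment $\mathrm{rowspace}(\bfS_k^T\bfR_k) \subseteq \mathrm{rowspace}(\bfC_k)$. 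Once that containment holds, $\boldsymbol{\Pi}_{\bfC_k}$ acts as the identity on $\bfS_k^T\bfR_k$, the projected and unprojected singular values agree, and the Eckart--Young estimate delivers the stated equality. Pinning down this containment is the one genuinely delicate point; the remaining algebra is routine.
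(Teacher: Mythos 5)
Your two-stage reduction---inner least squares over the free block $\bfS_k^T\bfalpha_k$, outer maximization over projectors onto subspaces of the row space of $\bfC_k$---is sound, and it is a genuinely different (and more self-contained) route than the paper's proof, which simply invokes Lemma~3.5 of the ADEIM paper and converts its generalized eigenproblem into an SVD. However, the step you postpone is not a delicate-but-routine verification: the containment of the row space of $\bfS_k^T\bfR_k$ in the row space of $\bfC_k$ does \emph{not} follow from the hypotheses you list, and without it the claimed identity is false. Take $\nh=3$, $\nr=1$, $\win=2$, $\nms=2$, $\bfU_k=\bfe_1$, $\bfP_k=\bfe_1$, $\bfS_k=[\bfe_2,\bfe_3]$, and $\bfF_k=\bigl(\begin{smallmatrix}1&0\\0&1\\1&1\end{smallmatrix}\bigr)$. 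Then $\bfP_k^T\bfF_k=(1\;\,0)$ has rank $\nr$, $\bfC_k=(1\;\,0)$, and $\bfS_k^T\bfR_k=\bigl(\begin{smallmatrix}0&-1\\-1&-1\end{smallmatrix}\bigr)$ has rank $\bar\nab=2$ with $\sigma_1^2=(3+\sqrt5)/2$. Every rank-one update satisfies $\bfbeta_k^T\bfC_k\in\operatorname{span}\{(1\;\,0)\}$, so
\begin{equation*}
\left\|\bfS_k^T\bigl(\bfU_{k+1}\bfC_k-\bfF_k\bigr)\right\|_F^2
=(a\beta)^2+(b\beta-1)^2+2\;\geq\;2
\end{equation*}
for scalars $a,b,\beta$, with minimum exactly $2$, whereas the lemma asserts the value $3-\sigma_1^2=(3-\sqrt5)/2<1$. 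The interpolation identity $\bfP_k^T\bfR_k=0$ cannot rescue this: it controls the rows of $\bfR_k$ at the interpolation points, while the rows selected by $\bfS_k$ involve rows of $\bfF_k$ that need not lie in the row space of $\bfP_k^T\bfF_k$.

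What closes the gap is the stronger assumption that $\bfF_k$ itself has rank $\nr$, i.e., that its columns span an $\nr$-dimensional space. Combined with $\operatorname{rank}(\bfP_k^T\bfF_k)=\nr$, this forces the row spaces of $\bfF_k$, $\bfP_k^T\bfF_k$, and $\bfC_k$ to coincide, hence the row space of $\bfR_k=\bfU_k\bfC_k-\bfF_k$ (and a fortiori of $\bfS_k^T\bfR_k$) lies in that of $\bfC_k$; your projector $\boldsymbol{\Pi}_{\bfC_k}$ then acts as the identity on $\bfS_k^T\bfR_k$ and the Eckart--Young step delivers the stated equality, including for $\nab<\bar\nab$. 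This is precisely the hypothesis that Proposition~\ref{prop:Convergence} adds (``$\bfF_k$ has rank $\nr$ and its columns are in the $\nr$-dimensional space $\bar{\Ucal}_{k+1}$'') in the setting where the lemma is actually used, so your argument becomes correct once you import it. Note that the paper's own proof glosses over the same point: transforming the generalized eigenproblem of ADEIM yields the squared singular values of the projected residual $\bfS_k^T\bfR_k\boldsymbol{\Pi}_{\bfC_k}$, and identifying these with the singular values of $\bfS_k^T\bfR_k$ uses the identical containment. So you located exactly the right crux; the error is in asserting it can be pinned down from the stated hypotheses, when in fact it must be added as an assumption (or the $\sigma_i$ in the statement must be read as the singular values of $\bfS_k^T\bfR_k\boldsymbol{\Pi}_{\bfC_k}$).
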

\begin{proof}
Because $\bfP^T_k\bfF_k$ has rank $\nr$, and because $\bfP^T_k\bfU_k$ is regular given the interpolation points are selected by standard empirical interpolation algorithms \cite{barrault_empirical_2004,deim2010,QDEIM}, the coefficient matrix $\bfC_k$ has full row rank. Then, the statement of this lemma follows from \cite[Lemma~3.5]{Peherstorfer15aDEIM} by transforming the generalized symmetric positive definite eigenproblem into the symmetric eigenproblem with matrix $(\bfS_k^T\bfR_k)^T(\bfS_k^T\bfR_k)$, which then is equivalent to computing the singular value decomposition of $\bfS_k^T\bfR_k$ so that the squared singular values of $\bfS_k^T\bfR_k$ are equal to the eigenvalues of the generalized eigenproblem. This shows this lemma by using the last identity of \cite[proof of Lemma~3.5]{Peherstorfer15aDEIM}.
\end{proof}

\begin{proposition}
Consider the same setup as in Lemma~\ref{lm:ADEIM:SingValReduction} and additionally assume that $\bfF_k$ has rank $\nr$ and its columns are in the $\nr$-dimensional space $\bar{\Ucal}_{k + 1}$. Let further $\Ucal_{k + 1}$ be the adapted space derived with the rank-$\nab$ ADEIM update and sampling points matrix $\bfS_k$. Then, it holds
\begin{equation}
d(\Ucal_{k + 1}, \bar{\Ucal}_{k + 1}) \leq \frac{\rho_k^2}{\sigma_{\text{min}}^2(\bfF_k)}
\label{eq:ABAS:ErrorBound}
\end{equation}
with
\begin{equation}
\rho_k^2 = \|\breve{\bfS}_k^T\bfR_k\|_F^2 + \sum_{i = \nab + 1}^{\bar{\nab}}\sigma_i^2\,,
\label{eq:ADEIMProp:FactorRho}
\end{equation}
where $\breve{\bfS}_k$ denotes the complementary sampling matrix of $\bfS_k$ and $\sigma_{\text{min}}(\bfF_k)$ is the smallest non-zero singular value of $\bfF_k$.
\label{prop:Convergence}
\end{proposition}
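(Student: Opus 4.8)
The plan is to bound the subspace distance by the full-residual Frobenius norm $\|\bfU_{k+1}\bfC_k - \bfF_k\|_F^2$ and then to evaluate that residual exactly as $\rho_k^2$ by splitting it over the sampled and complementary rows. First I would note that, since $\bfF_k$ has rank $\nr$ and its columns lie in the $\nr$-dimensional space $\bar{\Ucal}_{k+1}$, the columns of $\bfF_k$ in fact span $\bar{\Ucal}_{k+1}$, so $\bar{\Ucal}_{k+1} = \operatorname{range}(\bfF_k)$. Writing a factorization $\bfF_k = \bar{\bfU}\bfG$ with $\bar{\bfU} \in \mathbb{R}^{\nh \times \nr}$ an orthonormal basis of $\bar{\Ucal}_{k+1}$ and $\bfG \in \mathbb{R}^{\nr \times \win}$ of full row rank, the relation $\bfF_k^T\bfF_k = \bfG^T\bfG$ shows that the nonzero singular values of $\bfF_k$ and $\bfG$ coincide, so $\sigma_{\min}(\bfF_k) = \sigma_{\min}(\bfG)$.

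For the lower bound I would denote by $\Pi_{k+1}$ the orthogonal projector onto $\Ucal_{k+1} = \operatorname{range}(\bfU_{k+1})$ and use that the orthogonal projection is the best column-wise approximation: because the columns of $\bfU_{k+1}\bfC_k$ lie in $\Ucal_{k+1}$,
\[
\|(\bfI - \Pi_{k+1})\bfF_k\|_F^2 \leq \|\bfU_{k+1}\bfC_k - \bfF_k\|_F^2\,.
\]
Substituting $\bfF_k = \bar{\bfU}\bfG$ and applying the elementary bound $\|\bfA\bfG\|_F^2 = \operatorname{tr}(\bfA\bfG\bfG^T\bfA^T) \geq \sigma_{\min}^2(\bfG)\,\|\bfA\|_F^2$ with $\bfA = (\bfI - \Pi_{k+1})\bar{\bfU}$ gives
\[
\|(\bfI - \Pi_{k+1})\bfF_k\|_F^2 \geq \sigma_{\min}^2(\bfF_k)\,\|(\bfI - \Pi_{k+1})\bar{\bfU}\|_F^2 = \sigma_{\min}^2(\bfF_k)\,d(\bar{\Ucal}_{k+1}, \Ucal_{k+1})\,.
\]
Since $d$ is symmetric, $d(\Ucal_{k+1}, \bar{\Ucal}_{k+1}) = d(\bar{\Ucal}_{k+1}, \Ucal_{k+1})$, and chaining the two displays already yields $d(\Ucal_{k+1}, \bar{\Ucal}_{k+1}) \leq \|\bfU_{k+1}\bfC_k - \bfF_k\|_F^2/\sigma_{\min}^2(\bfF_k)$.

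It then remains to identify $\|\bfU_{k+1}\bfC_k - \bfF_k\|_F^2$ with $\rho_k^2$. Because $\bfS_k$ and $\breve{\bfS}_k$ select complementary and exhaustive row indices, for any $\bfM$ one has $\|\bfM\|_F^2 = \|\bfS_k^T\bfM\|_F^2 + \|\breve{\bfS}_k^T\bfM\|_F^2$. On the sampled rows, Lemma~\ref{lm:ADEIM:SingValReduction} gives $\|\bfS_k^T(\bfU_{k+1}\bfC_k - \bfF_k)\|_F^2 = \|\bfS_k^T\bfR_k\|_F^2 - \sum_{i=1}^{\nab}\sigma_i^2 = \sum_{i=\nab+1}^{\bar{\nab}}\sigma_i^2$, using that $\|\bfS_k^T\bfR_k\|_F^2$ equals the sum of all $\bar{\nab}$ squared singular values. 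On the complementary rows I would invoke the construction of the ADEIM update: since $\bfalpha_k\bfbeta_k^T$ is assembled from the singular vectors of $\bfS_k^T\bfR_k$, it is supported on the sampled rows, i.e.\ $\breve{\bfS}_k^T\bfalpha_k = \boldsymbol{0}$, whence $\breve{\bfS}_k^T\bfU_{k+1} = \breve{\bfS}_k^T\bfU_k$ and therefore $\breve{\bfS}_k^T(\bfU_{k+1}\bfC_k - \bfF_k) = \breve{\bfS}_k^T\bfR_k$. Adding the two contributions reproduces exactly $\rho_k^2$ as in \eqref{eq:ADEIMProp:FactorRho}, closing the argument.

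The step I expect to be most delicate is the claim $\breve{\bfS}_k^T\bfalpha_k = \boldsymbol{0}$ that the update leaves the unsampled rows untouched. This is not an added hypothesis but a structural property of how ADEIM forms its rank-$\nab$ correction from the singular value decomposition of $\bfS_k^T\bfR_k$, and it has to be read off from the construction in \cite{Peherstorfer15aDEIM} rather than rederived here; without it the complementary term would involve the adapted basis and the clean identity with $\rho_k^2$ would break. Everything else is routine: the best-approximation inequality, the trace bound $\|\bfA\bfG\|_F^2 \geq \sigma_{\min}^2(\bfG)\|\bfA\|_F^2$ (which uses only that $\bfG\bfG^T \succeq \sigma_{\min}^2(\bfG)\bfI$ on $\mathbb{R}^{\nr}$), the exhaustive row split, and the symmetry of $d$ recorded after \eqref{eq:ADEIM:AnalysisError}.
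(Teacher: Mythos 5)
Your proof is correct and follows essentially the same route as the paper's: the split of the residual into sampled and complementary rows, Lemma~\ref{lm:ADEIM:SingValReduction} on the sampled rows, the ADEIM no-change property $\breve{\bfS}_k^T\bfU_{k+1} = \breve{\bfS}_k^T\bfU_k$ on the complementary rows, the best-approximation inequality, and the factorization $\bfF_k = \bar{\bfU}\bfG$ together with $\sigma_{\min}(\bfG) = \sigma_{\min}(\bfF_k)$. The only (cosmetic) difference is that you work with the orthogonal projector $\Pi_{k+1}$ onto $\Ucal_{k+1}$ from the outset, which cleanly absorbs the paper's closing remark about $\bfU_{k+1}$ possibly being non-orthonormal and the resulting oblique projection.
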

\begin{proof}
First, note that the matrix $\breve{\bfS}_k^T\bfalpha_k\bfbeta_k^T$ has zero entries only, because the ADEIM update $\bfalpha_k\bfbeta_k^T$ updates only rows of $\bfU_k$ corresponding to the sampling points $\bfS_k$, see \cite[Lemma~3.5]{Peherstorfer15aDEIM}. Thus, it holds
\begin{equation}
\breve{\bfS}_k^T\bfU_{k+1} = \breve{\bfS}_k^T\bfU_k\,.
\label{eq:ADEIMProp:NoChange}
\end{equation}
Consider now the norm of the residual with respect to the adapted space $\Ucal_{k + 1}$
\begin{align}
\|\bfU_{k+1}\bfC_k - \bfF_k\|_F^2 = & \|\bfS_k^T(\bfU_{k+1}\bfC_k - \bfF_k)\|_F^2 + \|\breve{\bfS}_k^T(\bfU_{k+1}\bfC_k - \bfF_k)\|_F^2\label{eq:ADEIMProp:A}\\
= & \|\bfS_k^T(\bfU_k\bfC_k - \bfF_k)\|_F^2 - \sum_{i = 1}^{\nab} \sigma_i^2 + \|\breve{\bfS}_k^T(\bfU_{k+1}\bfC_k - \bfF_k)\|_F^2\label{eq:ADEIMProp:B} \\
= & \|\bfS_k^T(\bfU_k\bfC_k - \bfF_k)\|_F^2 -  \sum_{i = 1}^{\nab} \sigma_i^2 + \|\breve{\bfS}_k^T(\bfU_k\bfC_k - \bfF_k)\|_F^2\label{eq:ADEIMProp:C} \\
= & \sum_{i = \nab+1}^{\bar{\nab}} \sigma_i^2 + \|\breve{\bfS}_k^T\bfR_k\|_F^2 \label{eq:ADEIMProp:D}\,.
\end{align}
Equation \eqref{eq:ADEIMProp:A} follows because $\breve{\bfS}_k$ is the complementary sampling points matrix of $\bfS_k$. Equation \eqref{eq:ADEIMProp:B} follows from Lemma~\ref{lm:ADEIM:SingValReduction}. Equation \eqref{eq:ADEIMProp:C} holds because of \eqref{eq:ADEIMProp:NoChange}. Equation~\eqref{eq:ADEIMProp:D} uses $\bfR_k = \bfU_k\bfC_k - \bfF_k$ and that $\sum_{i = 1}^{\bar{r}} \sigma_i^2 = \|\bfS_k^T\bfR_k\|_F^2$. Consider now the projection of the columns of $\bfF_{k}$ onto $\Ucal_{k + 1}$ and observe that
\begin{equation}
\|\bfF_k - \bfU_{k + 1}\bfU_{k + 1}^T\bfF_k\|_F^2 \leq \|\bfU_{k + 1}\bfC_k - \bfF_k\|_F^2\,.
\label{eq:ADEIMProp:ProjBetterThanReg}
\end{equation}
Note that the basis matrix $\bfU_{k + 1}$ obtained with the ADEIM update is not necessarily orthonormal and therefore it might be necessary to consider the oblique projection $\bfU_{k + 1}(\bfU_{k + 1}^T\bfU_{k + 1})^{-1}\bfU_{k + 1}^T\bfF_k$ onto the column span $\Ucal_{k + 1}$ of $\bfU_{k + 1}$ in \eqref{eq:ADEIMProp:ProjBetterThanReg}; however, the projection error in the Frobenius norm is the same for both projections. With \eqref{eq:ADEIMProp:D} and the definition of $\rho_k^2$ follows
\begin{equation}
\|\bfF_k - \bfU_{k + 1}\bfU_{k + 1}^T\bfF_k\|_F^2 \leq \rho_k^2\,.
\label{eq:ADEIMProp:HelperHelper1}
\end{equation}
Let now $\bar{\bfU}_{k + 1}$ be an orthonormal basis matrix of $\bar{\Ucal}_{k + 1}$. Since $\bar{\Ucal}_{k + 1}$ is spanned by the columns of $\bfF_k$, there exists a full-rank matrix $\tilde{\bfF}_k \in \mathbb{R}^{\nr \times \win}$ such that $\bfF_k = \bar{\bfU}_{k + 1}\tilde{\bfF}_{k}$. We obtain
\begin{equation}
\begin{aligned}
\|\bfF_k - \bfU_{k + 1}\bfU_{k + 1}^T\bfF_k\|_F^2  = & \|(\bar{\bfU}_{k + 1} - \bfU_{k + 1}\bfU_{k + 1}^T\bar{\bfU}_{k + 1})\tilde{\bfF}_k\|_F^2\\
 \geq & \|\bar{\bfU}_{k + 1} - \bfU_{k + 1}\bfU_{k + 1}^T\bar{\bfU}_{k + 1}\|_F^2\sigma_{\text{min}}^2(\tilde{\bfF}_k)\\
 = & \|\bar{\bfU}_{k + 1} - \bfU_{k + 1}\bfU_{k + 1}^T\bar{\bfU}_{k + 1}\|_F^2\sigma_{\text{min}}^2(\bfF_k)\,,
\end{aligned}
\label{eq:ADEIMProp:HelperHelper2}
\end{equation}
where we used $\sigma_{\text{min}}(\tilde{\bfF}_k) = \sigma_{\text{min}}(\bfF_k)$, which holds because $\bar{\bfU}_{k + 1}$ is orthonormal. The matrix $\bfF_k$ has rank $\nr$ and therefore there are $\nr$ non-zero singular values corresponding to the basis vectors in $\bar{\bfU}_{k + 1}$. Combining \eqref{eq:ADEIMProp:HelperHelper2} with \eqref{eq:ADEIMProp:HelperHelper1} and the definition of $d(\bar{\Ucal}_{k + 1}, \Ucal_{k + 1})$ shows \eqref{eq:ABAS:ErrorBound}. Note that if $\bfU_{k + 1}$ is not orthonormal, then the oblique projection is used in \eqref{eq:ADEIMProp:HelperHelper2} to obtain $\|\bar{\bfU}_{k + 1} - \bfU_{k + 1}(\bfU_{k + 1}^T\bfU_{k + 1})^{-1}\bfU_{k + 1}^T\bar{\bfU}_{k + 1}\|_F^2 \sigma_{\text{min}}^2(\bfF_k) \leq \rho_k^2$, which leads to \eqref{eq:ABAS:ErrorBound} as well because $\|\bar{\bfU}_{k + 1} - \bfU_{k + 1}(\bfU_{k + 1}^T\bfU_{k + 1})^{-1}\bfU_{k + 1}^T\bar{\bfU}_{k + 1}\|_F^2 = \|\bar{\bfU}_{k + 1} - \hat{\bfU}_{k + 1}\hat{\bfU}_{k + 1}^T\bar{\bfU}_{k + 1}\|_F^2 = d(\bar{\Ucal}_{k + 1}, \Ucal_{k + 1})$ for an orthonormal basis matrix $\hat{\bfU}_{k + 1}$ that spans the same column space $\Ucal_{k + 1}$ as $\bfU_{k + 1}$.

\end{proof}

\subsection{Exploiting local coherence: Adaptive sampling}
\label{sec:ABAS:LocalCoherence}
Proposition~\ref{prop:Convergence} shows that the choice of the sampling points $\bfS_k$ influences the bound of the error $d(\bar{\Ucal}_{k + 1}, \Ucal_{k + 1})$ of the adapted space $\Ucal_{k + 1}$. In this section, we derive an adaptive sampling strategy that minimizes the upper bound derived in Proposition~\ref{prop:Convergence} in case of full-rank ADEIM updates. Then, we show that if the adaptive sampling strategy is used, the error $d(\bar{\Ucal}_{k + 1}, \Ucal_{k + 1})$ decays at least as fast with the number of sampling points $\nms$ as the norm of the DEIM residual, which in turn means that only few sampling points are necessary to adapt the basis if the residual decays fast.

\subsubsection{Adaptive sampling strategy based on residual}
\label{sec:ABAS:LocalCoherence:AdaptSampling}
Following Proposition~\ref{prop:Convergence} and the decay factor $\rho_k$ defined in \eqref{eq:ADEIMProp:FactorRho}, we select the sampling points so that $\|\breve{\bfS}_k^T\bfR_k\|_F^2$ is minimized. Note that $\sigma_{\text{min}}(\bfF_k)$ in \eqref{eq:ABAS:ErrorBound} is independent of the sampling points $\bfS_k$ and therefore it is sufficient to derive sampling points that lead to a small decay factor $\rho_k$.

Consider the component-wise residual
\begin{equation}
r_i = \|\bfR_k^T\bfe_i\|_2^2
\label{eq:ABAS:Coherence:CompResidual}
\end{equation}
and let $j_1, \dots, j_{\nh}$ be an ordering of $1, \dots, \nh$ such that $r_{j_1} \geq r_{j_2} \geq \dots \geq r_{j_{\nh}}$. Then, select the first $j_1, \dots, j_{\nms}$ components as sampling points and form the corresponding sampling points matrix
\begin{equation}
\bfS_k = [\bfe_{j_1}, \dots, \bfe_{j_{\nms}}]\,.
\label{eq:ABAS:Coherence:AdaptSamplingS}
\end{equation}
If a full-rank ADEIM update is applied, i.e., $\bar{\nab} = \nab$ in Proposition~\ref{prop:Convergence}, then this choice of sampling points is optimal in the sense that the bound $\rho_k$ is minimized.

We now show that a fast decay in the residual implies a fast decay in the error bound of $d(\bar{\Ucal}_{k + 1}, \Ucal_{k + 1})$.

\begin{proposition}
Consider the same setup as in Proposition~\ref{prop:Convergence}. Let $j_1, \dots, j_{\nh}$ be an ordering of $\{1, \dots, \nh\}$ such that the component-wise residual defined in \eqref{eq:ABAS:Coherence:CompResidual} decays as
\begin{equation}
r_{j_i} \leq c_1 \mathrm e^{-c_2 i^{\arate}}\,,
\label{eq:ABAS:Coherence:ResidualDecay}
\end{equation}
with rate $\arate > 1$ and constants $c_1, c_2 > 0$. The error of the adapted space $\Ucal_{k + 1}$ is bounded as
\begin{equation}
d(\bar{\Ucal}_{k + 1}, \Ucal_{k + 1}) \leq c_3\mathrm e^{-c_2 \nms^{\arate}}\,,
\label{eq:ABAS:Coherence:ErrorBoundNrSamples}
\end{equation}
if a full-rank ADEIM update is applied and $\nms$ sampling points are selected via the adaptive sampling \eqref{eq:ABAS:Coherence:AdaptSamplingS}. The constant $c_3 = c_1/((1 - \mathrm e^{-c_2})\sigma_{\text{min}}^2(\bfF_k))$ is independent of $\nms$. In particular, setting the number of sampling points to
\begin{equation}
m \geq \operatorname{min}\left\{\nh, \left(-\frac{1}{c_2}\log\left(\frac{\epsilon }{c_3}\right)\right)^{1/\arate}\right\}
\label{eq:CorConvergenceExp:M}
\end{equation}
guarantees $d(\bar{\Ucal}_{k + 1}, \Ucal_{k + 1}) \leq \epsilon$ for a threshold $\epsilon > 0$.
\label{cor:NumberM}
\end{proposition}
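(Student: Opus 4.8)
The plan is to start directly from the bound established in Proposition~\ref{prop:Convergence} and simplify it under the two new hypotheses of this statement, and then to control the resulting tail sum by a geometric series. First I would invoke the full-rank assumption $\nab = \bar{\nab}$: this makes the sum $\sum_{i = \nab + 1}^{\bar{\nab}}\sigma_i^2$ in \eqref{eq:ADEIMProp:FactorRho} empty, so that $\rho_k^2 = \|\breve{\bfS}_k^T\bfR_k\|_F^2$ and Proposition~\ref{prop:Convergence} reduces to $d(\bar{\Ucal}_{k + 1}, \Ucal_{k + 1}) \leq \|\breve{\bfS}_k^T\bfR_k\|_F^2 / \sigma_{\text{min}}^2(\bfF_k)$.

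Next I would rewrite the Frobenius norm in terms of the component-wise residual $r_i = \|\bfR_k^T\bfe_i\|_2^2$ of \eqref{eq:ABAS:Coherence:CompResidual}. Since $\breve{\bfS}_k$ collects exactly the canonical unit vectors of the non-sampled indices, $\|\breve{\bfS}_k^T\bfR_k\|_F^2$ equals the sum of $r_i$ over those non-sampled indices. Because the adaptive sampling rule \eqref{eq:ABAS:Coherence:AdaptSamplingS} retains the $\nms$ indices carrying the largest component residuals, the complement is precisely $\{j_{\nms + 1}, \dots, j_{\nh}\}$, and therefore $\rho_k^2 = \sum_{i = \nms + 1}^{\nh} r_{j_i}$. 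Applying the decay hypothesis \eqref{eq:ABAS:Coherence:ResidualDecay} then gives $\rho_k^2 \leq c_1 \sum_{i = \nms + 1}^{\infty} \mathrm{e}^{-c_2 i^{\arate}}$.

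The step I expect to be the main obstacle is turning this super-geometric tail into an honest geometric series with the clean prefactor $1/(1 - \mathrm{e}^{-c_2})$. The key elementary inequality is that for $\arate \geq 1$, integer $\nms \geq 1$, and integer $i \geq \nms + 1$ one has $i^{\arate} \geq \nms^{\arate} + (i - \nms - 1)$; this follows from the mean value theorem applied to $x \mapsto x^{\arate}$, since $\arate \xi^{\arate - 1} \geq \arate \geq 1$ for any $\xi \geq \nms \geq 1$ gives $i^{\arate} - \nms^{\arate} \geq i - \nms \geq i - \nms - 1$. Substituting this bound factors $\mathrm{e}^{-c_2 \nms^{\arate}}$ out of each term and, after the index shift $l = i - \nms - 1$, leaves $\sum_{l = 0}^{\infty} \mathrm{e}^{-c_2 l} = 1/(1 - \mathrm{e}^{-c_2})$. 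Hence $\rho_k^2 \leq c_1 \mathrm{e}^{-c_2 \nms^{\arate}}/(1 - \mathrm{e}^{-c_2})$, and dividing by $\sigma_{\text{min}}^2(\bfF_k)$ yields \eqref{eq:ABAS:Coherence:ErrorBoundNrSamples} with the stated constant $c_3$, which is manifestly independent of $\nms$.

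Finally, for the sample-count guarantee \eqref{eq:CorConvergenceExp:M}, I would require the right-hand side of \eqref{eq:ABAS:Coherence:ErrorBoundNrSamples} to be at most $\epsilon$, that is $c_3 \mathrm{e}^{-c_2 \nms^{\arate}} \leq \epsilon$, and solve for $\nms$ by taking logarithms: since $c_2 > 0$, this is equivalent to $\nms^{\arate} \geq -\frac{1}{c_2}\log(\epsilon/c_3)$, hence $\nms \geq (-\frac{1}{c_2}\log(\epsilon/c_3))^{1/\arate}$. The minimum with $\nh$ in \eqref{eq:CorConvergenceExp:M} covers the boundary case, since one can never select more than $\nh$ sampling points, and at $\nms = \nh$ the complementary sampling matrix $\breve{\bfS}_k$ is empty, so $\rho_k = 0$ and $d(\bar{\Ucal}_{k + 1}, \Ucal_{k + 1}) = 0 \leq \epsilon$ holds trivially.
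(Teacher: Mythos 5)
Your proof is correct and follows essentially the same route as the paper: the full-rank update reduces the bound of Proposition~\ref{prop:Convergence} to $\rho_k^2 = \|\breve{\bfS}_k^T\bfR_k\|_F^2$, the adaptive sampling rule identifies this with the tail sum $\sum_{i=\nms+1}^{\nh} r_{j_i}$, and the tail is bounded by $c_1 \mathrm e^{-c_2\nms^{\arate}}/(1-\mathrm e^{-c_2})$ before dividing by $\sigma_{\text{min}}^2(\bfF_k)$ and solving for $\nms$. The only cosmetic difference is the elementary inequality used to extract the factor $\mathrm e^{-c_2\nms^{\arate}}$: you use the mean-value-theorem bound $i^{\arate} \geq \nms^{\arate} + (i-\nms-1)$, whereas the paper uses superadditivity $(i+(\nms+1))^{\arate} \geq i^{\arate} + (\nms+1)^{\arate}$ together with $i^{\arate} \geq i$ and $(\nms+1)^{\arate} \geq \nms^{\arate}$; both yield the same geometric series and the same constant $c_3$.
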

\begin{proof}
In case of a full-rank update, the factor $\rho_k^2$ in \eqref{eq:ABAS:ErrorBound} is
\[
\rho_k^2 = \|\breve{\bfS}_k^T\bfR_k\|_F^2\,,
\]
as shown in Proposition~\ref{prop:Convergence}. Then, we obtain with \eqref{eq:ABAS:Coherence:ResidualDecay} and the adaptive sampling \eqref{eq:ABAS:Coherence:AdaptSamplingS}
\begin{equation}
\begin{aligned}
\|\breve{\bfS}_k^T\bfR_k\|_F^2 = & \sum_{i = \nms + 1}^{\nh} \|\bfR_k^T\bfe_{j_i}\|_2^2 \leq c_1 \sum_{i = \nms + 1}^{\nh} \mathrm e^{-c_2 i^{\arate}} \leq c_1 \sum_{i = \nms + 1}^{\infty} \mathrm e^{-c_2 i^{\arate}}\\
\leq & c_1\sum_{i = 0}^{\infty} \mathrm e^{-c_2 (i + (\nms + 1))^{\arate}} \leq c_1 \mathrm e^{-c_2 (\nms + 1)^{\arate}} \sum_{i = 0}^{\infty} \mathrm e^{-c_2 i^{\arate}}\,,
\end{aligned}
\label{eq:NumberM:HelperZ}
\end{equation}
where the last inequality holds because $i \geq 0, \arate > 1$ and thus $(i + (m + 1))^{\arate} \geq i^{\arate} + (m + 1)^{\arate}$. Using that $\arate > 1$, we obtain $i^\arate \geq i$ and $(m + 1)^{\arate} \geq m^{\arate}$ and thus
\begin{equation}
\|\breve{\bfS}_k^T\bfR_k\|_F^2 \leq c_1 \mathrm e^{-c_2 (\nms + 1)^{\arate}} \sum_{i = 0}^{\infty} \mathrm e^{-c_2 i^{\arate}} \leq c_1 \mathrm e^{-c_2 \nms^{\arate}} \sum_{i = 0}^{\infty} \mathrm e^{-c_2 i} = c_1 e^{-c_2 \nms^{\arate}} \frac{1}{\mathrm 1 - e^{-c_2}}\,.
\label{eq:NumberM:Helper2}
\end{equation}
Set $c_3 = c_1/((1 - \mathrm e^{-c_2})\sigma^2_{\text{min}}(\bfF_k))$ to obtain \eqref{eq:ABAS:Coherence:ErrorBoundNrSamples} with Proposition~\ref{prop:Convergence}. Setting $\nms$ as in \eqref{eq:CorConvergenceExp:M} shows $d(\bar{\Ucal}_{k + 1}, \Ucal_{k + 1}) \leq \epsilon$.
\end{proof}

Proposition~\ref{cor:NumberM} is formulated for an exponential decay of the residual \eqref{eq:ABAS:Coherence:ResidualDecay}; however, the steps in the proof of Proposition~\ref{eq:ABAS:Coherence:ResidualDecay} seem to generalize to other decay behavior of the residual such as an algebraic decay. The key in the proof of Proposition~\ref{eq:ABAS:Coherence:ResidualDecay} is \eqref{eq:NumberM:HelperZ}, which builds an argument based on a geometric series that the sum of the norms of the residual decays exponentially in the number of sampling points $\nms$. In case of, e.g., an algebraic decay of the sum of norms, we expect that such an upper bound would decay algebraically. Then, the rate of the residual decay can be carried over to the decay of the error $d(\bar{\Ucal}_{k + 1}, \Ucal_{k + 1})$ of the adapted space $\Ucal_{k + 1}$ with mild modifications of the proof of Proposition~\ref{cor:NumberM}.

\subsubsection{Local coherence}
Following, e.g., \cite{Candes2009}, define the coherence of a space $\Ucal$ as
\[
\gamma(\Ucal) = \frac{\nh}{\nr}\max_{i = 1, \dots, \nh} \|\bfU^T\bfe_i\|_2^2\,,
\]
where $\bfU$ is an orthonormal basis of $\Ucal$ and $\bfe_i$ is the $i$-th canonical unit vector. Define further the local coherence as
\[
\gamma_i(\Ucal) = \frac{\nh}{\nr}\|\bfU^T\bfe_i\|_2^2\,,
\]
for $i = 1, \dots, \nh$, see, e.g., \cite{6651836,NIPS2015_6018}. We refer to \cite{0266-5611-23-3-008,Candes2009,6651836,NIPS2015_6018} for details.

We now show that the rate of the decay of the local coherence of the current DEIM space $\Ucal_{k}$ and of the space $\bar{\Ucal}_{k + 1}$ to which we want to adapt is reflected in the decay of the bound of the error $d(\bar{\Ucal}_{k + 1}, \Ucal_{k + 1})$ of the adapted space $\Ucal_{k + 1}$ with respect to the number of sampling points $\nms$. Thus, we now relate the decay of the error $d(\bar{\Ucal}_{k + 1}, \Ucal_{k + 1})$ of the adapted space to a property of the spaces $\Ucal_k$ and $\bar{\Ucal}_{k + 1}$, namely their decay of the local coherence.

\begin{lemma}
Let $j_1, \dots, j_{\nh}$ be an ordering of $\{1, \dots, \nh\}$ such that
\begin{equation}
\gamma_{j_i}(\Ucal_k) \leq c_4 \exp\left(- c_5 i^{\arate}\right)\,,\qquad \gamma_{j_i}(\bar{\Ucal}_{k + 1}) \leq \bar{c}_4 \exp\left(-\bar{c}_5 i^{\bar{\arate}}\right)
\label{eq:LinkCohResidual:Assumptions}
\end{equation}
holds for $i = 1, \dots, \nh$, with $c_4, \bar{c}_4, c_5, \bar{c}_5 > 0$ and $\arate, \bar{\arate} > 1$. Let the columns of $\bfF_k$ be in $\bar{\Ucal}_{k + 1}$. Define $\bfC_k = (\bfP^T_k\bfU_k)^{-1}\bfP_k^T\bfF_k$, then for the residual $\bfR_k = \bfU_k\bfC_k - \bfF_k$ holds
\begin{equation}
\|\bfR^T_k\bfe_{j_i}\|_2^2 \leq c_7(\Lambda_k)\frac{\nr}{\nh}\|\bfF_k\|_2^2\mathrm e^{-\min\left\{c_5, \bar{c}_5\right\} i^{\min\left\{\arate, \bar{\arate}\right\}}}\,,\qquad i = 1, \dots, \nh\,,
\label{eq:LinkCohResidual:Result}
\end{equation}
with a constant $c_7(\Lambda_k) > 0$ that is independent of $\arate, \bar{\arate}, c_5, \bar{c}_5$ and that depends on $\Lambda_k = \|(\bfP^T_k\bfU_k)^{-1}\|_2$.
\label{prop:LinkCohResidual}
\end{lemma}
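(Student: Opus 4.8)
The plan is to bound each column norm $\|\bfR_k^T\bfe_{j_i}\|_2$ of the residual matrix separately and only square at the very end, since the hypotheses \eqref{eq:LinkCohResidual:Assumptions} are stated componentwise. Writing the residual row corresponding to index $j_i$ as $\bfe_{j_i}^T\bfR_k = \bfe_{j_i}^T\bfU_k\bfC_k - \bfe_{j_i}^T\bfF_k$, the triangle inequality and submultiplicativity give
\[
\|\bfR_k^T\bfe_{j_i}\|_2 \le \|\bfU_k^T\bfe_{j_i}\|_2\,\|\bfC_k\|_2 + \|\bfF_k^T\bfe_{j_i}\|_2\,.
\]
The two endpoint factors $\|\bfU_k^T\bfe_{j_i}\|_2$ and $\|\bfF_k^T\bfe_{j_i}\|_2$ will be controlled through local coherence, while $\|\bfC_k\|_2$ will be absorbed into the constant $c_7(\Lambda_k)$.

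First I would translate the two endpoint factors into coherence. By the definition of local coherence, $\|\bfU_k^T\bfe_{j_i}\|_2^2 = \tfrac{\nr}{\nh}\gamma_{j_i}(\Ucal_k) \le \tfrac{\nr}{\nh}c_4\,\mathrm e^{-c_5 i^{\arate}}$. For the $\bfF_k$ term I use that its columns lie in $\bar{\Ucal}_{k+1}$: writing $\bfF_k = \bar{\bfU}_{k+1}\tilde{\bfF}_k$ with $\bar{\bfU}_{k+1}$ orthonormal gives $\bfF_k^T\bfe_{j_i} = \tilde{\bfF}_k^T\bar{\bfU}_{k+1}^T\bfe_{j_i}$, hence $\|\bfF_k^T\bfe_{j_i}\|_2 \le \|\tilde{\bfF}_k\|_2\,\|\bar{\bfU}_{k+1}^T\bfe_{j_i}\|_2 = \|\bfF_k\|_2\sqrt{\tfrac{\nr}{\nh}\gamma_{j_i}(\bar{\Ucal}_{k+1})}$, where $\|\tilde{\bfF}_k\|_2 = \|\bfF_k\|_2$ because left-multiplication by an orthonormal matrix preserves singular values. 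For the coefficient factor I use $\bfC_k = (\bfP_k^T\bfU_k)^{-1}\bfP_k^T\bfF_k$ to get $\|\bfC_k\|_2 \le \Lambda_k\|\bfP_k^T\bfF_k\|_2 \le \Lambda_k\|\bfF_k\|_2$, the last inequality because $\bfP_k^T$ selects a subset of rows and therefore has spectral norm one.

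Substituting the two coherence bounds \eqref{eq:LinkCohResidual:Assumptions} and factoring out the common $\|\bfF_k\|_2\sqrt{\nr/\nh}$ yields
\[
\|\bfR_k^T\bfe_{j_i}\|_2 \le \|\bfF_k\|_2\sqrt{\tfrac{\nr}{\nh}}\left(\Lambda_k\sqrt{c_4}\,\mathrm e^{-c_5 i^{\arate}/2} + \sqrt{\bar{c}_4}\,\mathrm e^{-\bar{c}_5 i^{\bar{\arate}}/2}\right)\,.
\]
The final step collapses the two distinct exponential rates into the single min-rate of the statement. Because $i \ge 1$ and $\arate,\bar{\arate} > 1$, both $c_5 i^{\arate}$ and $\bar{c}_5 i^{\bar{\arate}}$ dominate $\min\{c_5,\bar{c}_5\}\,i^{\min\{\arate,\bar{\arate}\}}$, so each exponential is at most $\mathrm e^{-\min\{c_5,\bar{c}_5\}\,i^{\min\{\arate,\bar{\arate}\}}/2}$. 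Pulling this common factor out of the sum and squaring gives exactly \eqref{eq:LinkCohResidual:Result} with $c_7(\Lambda_k) = (\Lambda_k\sqrt{c_4} + \sqrt{\bar{c}_4})^2$, which depends on $\Lambda_k$ but not on the rate parameters $c_5,\bar{c}_5,\arate,\bar{\arate}$.

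I expect the main obstacle to be bookkeeping rather than conceptual: one must verify that the two separate decays can legitimately be merged into one min-rate factor (this is precisely where $i \ge 1$ together with both exponents exceeding one is used) and that the resulting constant genuinely avoids any hidden dependence on the rates. The secondary points to state carefully are the orthonormality identity $\|\tilde{\bfF}_k\|_2 = \|\bfF_k\|_2$ and the row-selection bound $\|\bfP_k^T\bfF_k\|_2 \le \|\bfF_k\|_2$, both standard but needed to keep $c_7(\Lambda_k)$ in the clean closed form above.
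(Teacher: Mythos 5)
Your proof is correct and follows essentially the same route as the paper's: the paper expands $\|\bfF_k^{(j_i)} - \bfU_k^{(j_i)}\bfC_k\|_2^2$ and applies Cauchy--Schwarz, which is algebraically identical to your triangle-inequality-then-square argument, and it uses the same factorization $\bfF_k = \bar{\bfU}_{k+1}\tilde{\bfF}_k$, the same bound $\|\bfC_k\|_2 \leq \Lambda_k\|\bfF_k\|_2$, and the same merging of the two exponential rates into the min-rate. Your constant $c_7(\Lambda_k) = (\Lambda_k\sqrt{c_4} + \sqrt{\bar{c}_4})^2$ coincides exactly with the paper's $c_6 + \bar{c}_6 + 2\sqrt{c_6\bar{c}_6}$.
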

\begin{proof}
Denote with $\bfU_k^{(i)}$, $\bar{\bfU}_{k + 1}^{(i)}$, and $\bfF_k^{(i)}$ the $i$-th row of $\bfU$, $\bar{\bfU}_{k + 1}$, and $\bfF_k$, respectively. The matrix $\bar{\bfU}_{k + 1}$ is an orthonormal basis matrix of $\bar{\Ucal}_{k + 1}$. Let further $\tilde{\bfF}_k \in \mathbb{R}^{\nr \times \win}$ be a matrix such that $\bfF_k = \bar{\bfU}_{k + 1}\tilde{\bfF}_k$. Note that $\|\bfF_k\|_2 = \|\tilde{\bfF}_k\|_2$, because $\bar{\bfU}_{k + 1}$ is orthonormal. Then, with $\bfC_k = (\bfP_k^T\bfU_k)^{-1}\bfP_k^T\bfF_k$, follows
\[
\begin{aligned}
\|\bfR^T_k\bfe_{j_i}\|_2^2 = & \|\bfF_k^{(j_i)} - \bfU_k^{(j_i)}\bfC_k\|_2^2 \\
= & \|\bfF_k^{(j_i)}\|_2^2 - 2 \bfF_k^{(j_i)}(\bfU_k^{(j_i)}\bfC_k)^T  + \|\bfU_k^{(j_i)}\bfC_k\|_2^2\\
\leq & \|\bfF_{k}^{(j_i)}\|_2^2 + 2 \|\bfF_k^{(j_i)}\|_2\|\bfU_k^{(j_i)}\|_2\|\bfC_k\|_2 + \|\bfU_k^{(j_i)}\|_2^2\|\bfC_k\|_2^2\,.
\end{aligned}
\]
Now make the following estimate
\[
\|\bfF_k^{(j_i)}\|_2^2 \leq \|\bar{\bfU}_{k + 1}^{(j_i)}\|_2^2 \|\tilde{\bfF}_k\|_2^2 = \|\bar{\bfU}_{k + 1}^{(j_i)}\|_2^2\|\bfF_k\|_2^2\,,
\]
because $\|\bfF_k\|_2 = \|\tilde{\bfF}_k\|_2$. Further, we have
\[
\|\bfC_k\|_2 \leq \|(\bfP_k^T\bfU_k)^{-1}\|_2\|\bfF_k\|_2\,.
\]
Set now $\bar{c}_6 = \bar{c}_4$ and $c_6 = c_4 \|(\bfP_k^T\bfU_k)^{-1}\|_2^2$ to bound
\[
\|\bfR_k^T\bfe_{j_i}\|_2^2 \leq \frac{\nr}{\nh}\|\bfF_k\|_2^2\left(\bar{c}_6\mathrm e^{-\bar{c}_5 i^{\bar{\arate}}} + 2\sqrt{\bar{c}_6 c_6}\mathrm e^{-\frac{\bar{c}_5}{2}i^{\bar{\arate}}-\frac{c_5}{2}i^{\arate}} + c_6\mathrm e^{-c_5i^{\arate}}\right)\,.
\]
Now set $c_7(\Lambda_k) = c_6 + \bar{c}_6 + 2\sqrt{c_6\bar{c}_6}$ to obtain
\[
\|\bfR_k^T\bfe_{j_i}\|_2^2 \leq c_7(\Lambda_k)\frac{\nr}{\nh}\|\bfF_k\|_2^2 \mathrm e^{-\min\{c_5,\bar{c}_5\}i^{\min\{\arate, \bar{\arate}\}}}\,,
\]
which shows the proposition.
\end{proof}

Since we consider subspaces of finite-dimensional spaces only, i.e., $\nh$ is finite, the bounds in \eqref{eq:LinkCohResidual:Assumptions} hold for any subspace by increasing the constants and choosing the rate close to 1. However, Lemma~\ref{prop:LinkCohResidual} still is meaningful because it shows that the constants and rates that appear in \eqref{eq:LinkCohResidual:Assumptions} are obtained in the bound of the residual in \eqref{eq:LinkCohResidual:Result} as well. Thus, the local coherence structure is directly reflected in the decay of the DEIM residual. The constant $c_7(\Lambda_k)$ depends on $\Lambda_k = \|(\bfP_k^T\bfU_k)^{-1}\|_2$, which is a well-studied quantity in the context of empirical interpolation and typically found to be low in practice \cite{barrault_empirical_2004,deim2010,QDEIM,PDG18ODEIM}.

The following proposition combines the local coherence structure exploited in Lemma~\ref{prop:LinkCohResidual} and the adaptive sampling of Proposition~\ref{cor:NumberM} to derive the number of sampling points $\nms$ that are required to achieve $d(\bar{\Ucal}_{k + 1}, \Ucal_{k + 1}) \leq \epsilon$ in probability. We draw random samples from the space $\bar{\Ucal}_{k + 1}$ to avoid making assumptions on the specific right-hand side matrix $\bfF_k$, which still allows us to bound in probability the condition number of $\bfF_k$ that is required when applying Proposition~\ref{cor:NumberM} in the following. We draw a random sample from the space $\bar{\Ucal}_{k + 1}$ by taking an i.i.d.~standard Gaussian matrix $\tilde{\bfF}_k$ and multiplying it with the basis matrix $\bar{\bfU}_{k + 1}$. Other distributions over the elements of $\bar{\Ucal}_{k + 1}$ can be considered to derive similar statements as in the proposition below.

\begin{proposition}
Assume that Lemma~\ref{prop:LinkCohResidual} applies and consider the same setting as in Proposition~\ref{prop:Convergence} except that $\bfF_k = \bar{\bfU}_{k + 1}\tilde{\bfF}_{k}$ with $\tilde{\bfF}_k$ being an $\nr \times \nr$ matrix with independent and identically distributed (i.i.d.) standard Gaussian entries and $\bfP_k^T\bar{\bfU}_{k + 1}$ having full rank $\nr$. Then, $d(\bar{\Ucal}_{k + 1}, \Ucal_{k + 1}) \leq \epsilon$ with probability at least $1 - \delta$ if a full-rank update is applied and
\begin{equation}
m \geq \operatorname{min}\left\{\nh, \left(-\frac{1}{\min\{c_5, \bar{c}_5\}} \log\left(\frac{\epsilon \delta^2 \nh}{c_8(\Lambda_k)\nr}\right)\right)^{1/\min\{\arate, \bar{\arate}\}}\right\}\,,
\label{eq:ABAS:Coherence:RandomM}
\end{equation}
where $c_8(\Lambda_k)$ is a constant independent of $\nms, \arate$, and $\bar{\arate}$.
\label{prop:Random}
\end{proposition}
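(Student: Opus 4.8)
The plan is to reduce this probabilistic statement to the deterministic Proposition~\ref{cor:NumberM}, using Lemma~\ref{prop:LinkCohResidual} to convert the local-coherence decay into a residual decay, and then to control the only random quantity entering the constant $c_3$ of Proposition~\ref{cor:NumberM}, namely the condition number of the Gaussian matrix $\tilde{\bfF}_k$.

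First I would apply Lemma~\ref{prop:LinkCohResidual}, whose hypotheses are exactly the coherence-decay assumptions made here, to obtain $\|\bfR_k^T\bfe_{j_i}\|_2^2 \leq c_7(\Lambda_k)\tfrac{\nr}{\nh}\|\bfF_k\|_2^2\,\mathrm e^{-\min\{c_5,\bar c_5\} i^{\min\{\arate,\bar\arate\}}}$. This is precisely the exponential-decay hypothesis \eqref{eq:ABAS:Coherence:ResidualDecay} of Proposition~\ref{cor:NumberM} with $c_1 = c_7(\Lambda_k)\tfrac{\nr}{\nh}\|\bfF_k\|_2^2$, $c_2 = \min\{c_5,\bar c_5\}$, and rate $\min\{\arate,\bar\arate\}$. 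One subtlety: Lemma~\ref{prop:LinkCohResidual} sorts by coherence whereas the adaptive sampling \eqref{eq:ABAS:Coherence:AdaptSamplingS} sorts by residual; but since $\|\breve{\bfS}_k^T\bfR_k\|_F^2$ is the sum of the $\nh-\nms$ \emph{smallest} component residuals, it is bounded above by the tail sum of \emph{any} ordering, in particular the coherence ordering, so the estimate \eqref{eq:NumberM:HelperZ} carries over unchanged. Proposition~\ref{cor:NumberM} then gives $d(\bar{\Ucal}_{k+1},\Ucal_{k+1}) \leq c_3\,\mathrm e^{-c_2\nms^{\arate}}$ with
\[
c_3 = \frac{c_7(\Lambda_k)\,\nr\,\|\bfF_k\|_2^2}{\nh\,(1-\mathrm e^{-c_2})\,\sigma_{\text{min}}^2(\bfF_k)}\,.
\]

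The second step removes the dependence on $\bfF_k$. Because $\bfF_k = \bar{\bfU}_{k+1}\tilde{\bfF}_k$ with $\bar{\bfU}_{k+1}$ orthonormal, the matrices $\bfF_k$ and $\tilde{\bfF}_k$ share singular values, so $\|\bfF_k\|_2^2/\sigma_{\text{min}}^2(\bfF_k) = \kappa(\tilde{\bfF}_k)^2$ is the squared condition number of the $\nr\times\nr$ standard Gaussian matrix $\tilde{\bfF}_k$, giving $c_3 = \tfrac{c_7(\Lambda_k)\nr}{\nh(1-\mathrm e^{-c_2})}\kappa(\tilde{\bfF}_k)^2$. I would then invoke the standard nonasymptotic singular-value estimates for square Gaussian matrices: $\sigma_{\text{max}}(\tilde{\bfF}_k)$ concentrates near $2\sqrt{\nr}$ with sub-Gaussian upper tail, while $\sigma_{\text{min}}$ obeys the linear small-ball bound $\mathbb{P}[\sigma_{\text{min}}(\tilde{\bfF}_k) \leq t] \leq C_0\sqrt{\nr}\,t$. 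Splitting the failure probability, with probability at least $1-\delta$ one obtains $\sigma_{\text{max}}^2(\tilde{\bfF}_k) = \mathcal{O}(\nr)$ and $\sigma_{\text{min}}^2(\tilde{\bfF}_k) \geq c\,\delta^2/\nr$, whence $\kappa(\tilde{\bfF}_k)^2 \leq C\nr^2/\delta^2$; absorbing these constants and $\nr$-powers into a single $c_8(\Lambda_k)$ yields, on this event, $c_3 \leq c_8(\Lambda_k)\nr/(\delta^2\nh)$.

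Finally, since the number of sampling points prescribed by \eqref{eq:CorConvergenceExp:M} is monotone increasing in $c_3$, substituting the high-probability bound $c_3 \leq c_8(\Lambda_k)\nr/(\delta^2\nh)$ into \eqref{eq:CorConvergenceExp:M} produces exactly the stated choice \eqref{eq:ABAS:Coherence:RandomM} and hence $d(\bar{\Ucal}_{k+1},\Ucal_{k+1}) \leq \epsilon$ on the $1-\delta$ event. The main obstacle — and the only genuinely probabilistic ingredient — is the lower tail of $\sigma_{\text{min}}(\tilde{\bfF}_k)$: its linear small-ball behavior is precisely what produces the $\delta^2$ (rather than $\delta$) inside the logarithm of \eqref{eq:ABAS:Coherence:RandomM}. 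Care is needed to keep track of the $\nr$-dependence (which is harmless, since $c_8$ may depend on $\nr$) and to ensure the $\sigma_{\text{max}}$ deviation does not spoil the clean exponential form, e.g.\ by using $\sigma_{\text{max}} \leq 2\sqrt{\nr} + \sqrt{2\log(2/\delta)}$ when $\delta$ is very small.
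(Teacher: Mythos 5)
Your proof has the same skeleton as the paper's: Lemma~\ref{prop:LinkCohResidual} supplies the exponential decay of the component-wise residual, Proposition~\ref{cor:NumberM} (equivalently Proposition~\ref{prop:Convergence} combined with the tail-sum estimate \eqref{eq:NumberM:Helper2}) converts it into $d(\bar{\Ucal}_{k+1},\Ucal_{k+1}) \leq c_3\,\mathrm e^{-c_2 \nms^{\arate}}$ with $c_3$ proportional to $\|\bfF_k\|_2^2/\sigma_{\text{min}}^2(\bfF_k) = \kappa^2(\tilde{\bfF}_k)$, and a probabilistic bound on $\kappa(\tilde{\bfF}_k)$ finishes the argument. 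Two of your side remarks are genuinely valuable: the observation that the residual-sorted tail sum $\|\breve{\bfS}_k^T\bfR_k\|_F^2$ is dominated by the tail sum under \emph{any} ordering (in particular the coherence ordering of Lemma~\ref{prop:LinkCohResidual}) is exactly what justifies reusing \eqref{eq:NumberM:Helper2} here, a point the paper passes over silently. You should also state explicitly that $\tilde{\bfF}_k$ is almost surely invertible, so that $\bfP_k^T\bfF_k = \bfP_k^T\bar{\bfU}_{k+1}\tilde{\bfF}_k$ has full rank and the full-rank setting of Proposition~\ref{prop:Convergence} applies; the paper makes this explicit.

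The gap is in the condition-number step, which is the one place you diverge from the paper. The paper invokes the joint tail bound $P[\kappa(\tilde{\bfF}_k) \geq x] \leq c\,x^{-1}$ of \cite{EdelmanTailBounds} and \cite{doi:10.1137/S0895479803429764}, which immediately gives $\kappa^2 \leq c^2\delta^{-2}$ with probability $1-\delta$, with $c$ depending only on $\nr$. You instead assemble the bound from $P[\sigma_{\text{min}} \leq t] \leq C_0\sqrt{\nr}\,t$ and $P[\sigma_{\text{max}} \geq 2\sqrt{\nr}+s] \leq \mathrm e^{-s^2/2}$. But forcing the second event to have probability at most $\delta/2$ requires $s \geq \sqrt{2\log(4/\delta)}$, no matter how you allocate the failure probabilities, so on the good event you only obtain $\sigma_{\text{max}}^2 \lesssim \nr + \log(1/\delta)$, not $\mathcal{O}(\nr)$; hence $\kappa^2 \lesssim \nr\left(\nr + \log(1/\delta)\right)/\delta^2$ rather than $C\nr^2/\delta^2$. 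Your claim ``$\sigma_{\text{max}}^2(\tilde{\bfF}_k) = \mathcal{O}(\nr)$ with probability $1-\delta$'' is therefore false uniformly in $\delta$, and the constant $c_8$ you end up with depends on $\delta$ --- which defeats the content of \eqref{eq:ABAS:Coherence:RandomM}, whose whole point is to isolate the $\delta$-dependence as the explicit $\delta^2$ inside the logarithm. Your own proposed remedy (using $\sigma_{\text{max}} \leq 2\sqrt{\nr}+\sqrt{2\log(2/\delta)}$ for small $\delta$) is the source of the loss, not a cure. Two honest repairs: (i) restrict to $\delta \geq 2\,\mathrm e^{-\nr}$, in which case $\log(2/\delta) \leq \nr$ and your route does yield $\kappa^2 \leq C\nr^2/\delta^2$; or (ii) cite the dedicated condition-number tail bound, as the paper does, which holds for all $\delta$.
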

\begin{proof}
Since Lemma~\ref{prop:LinkCohResidual} applies, we have with \eqref{eq:NumberM:Helper2} that
\[
\|\breve{\bfS}_k^T\bfR_k\|_F^2 \leq \frac{c_7(\Lambda_k)\frac{\nr}{\nh}\|\bfF_k\|_2^2}{1 - \mathrm e^{-\min\{c_5, \bar{c}_5\}}} \mathrm e^{-\min\{c_5, \bar{c}_5\} \nms ^{\min\{\arate, \bar{\arate}\}}}\,.
\]
With Proposition~\ref{prop:Convergence}, which is applicable with a full-rank update because $\bfP^T_k\bfF_k = \bfP^T\bar{\bfU}_{k + 1}\tilde{\bfF}_k$ has full row rank $\nr$ if $\tilde{\bfF}_k$ is regular, follows
\[
d(\bar{\Ucal}_{k + 1}, \Ucal_{k + 1}) \leq \frac{c_7(\Lambda_k)\frac{\nr}{\nh}\kappa^2(\bfF_k)}{1 - \mathrm e^{-\min\{c_5, \bar{c}_5\}}}  \mathrm e^{-\min\{c_5, \bar{c}_5\}\nms^{\text{min}\{\arate, \bar{\arate}\}}}\,,
\]
where $\kappa(\bfF_k)$ is the spectral condition number of $\bfF_k$. Since $\bfF_k = \bar{\bfU}_{k + 1}\tilde{\bfF}_k$ with $\bar{\bfU}_{k + 1}$ orthonormal, we have $\kappa(\bfF_k) = \kappa(\tilde{\bfF}_k)$. We now bound the spectral condition number of $\tilde{\bfF}_k$ with high probability by exploiting that $\tilde{\bfF}_k$ has i.i.d.~standard Gaussian entries. The work \cite[Theorem~1.1]{EdelmanTailBounds} and \cite[Theorem~1.1]{doi:10.1137/S0895479803429764} show that the spectral condition number of $\tilde{\bfF}_k$ is bounded in probability as
\[
P[\kappa(\tilde{\bfF}_k) \geq \delta] \leq c \delta^{-1}\,,
\]
for $\delta > 0$ and with a positive constant $c$ that depends $\nr$. Thus, we have $P[\kappa(\tilde{\bfF}_k) \leq c\delta^{-1}] \geq 1 - \delta$ and $P[\kappa^2(\tilde{\bfF}_k) \leq c^2\delta^{-2}] \geq 1 - \delta$, which leads to
\[
d(\bar{\Ucal}_{k + 1}, \Ucal_{k + 1}) \leq \frac{c_8}{\delta^2}\frac{\nr}{\nh} \mathrm e^{-\min\{c_5, \bar{c}_5\}\nms^{\text{min}\{\arate, \bar{\arate}\}}}
\]
with probability at least $1 - \delta$ and $c_8(\Lambda_k) = (c_7(\Lambda_k) c^2)/(1 - \mathrm e^{-\min\{c_5, \bar{c}_5\}})$. Setting $\nms$ as in \eqref{eq:ABAS:Coherence:RandomM} leads to $d(\bar{\Ucal}_{k + 1}, \Ucal_{k + 1}) \leq \epsilon$ with probability $1 - \delta$.
\end{proof}

In the following, we focus on problems with moving coherent structures that are local in the spatial domain. Then, typically, the residual \eqref{eq:ABAS:Coherence:CompResidual} decays fast, cf.~Section~\ref{sec:Prelim:ProblemFormulation}, and local basis updates with \abas{} are sufficient. In contrast, if the basis updates have to take into account new features that are global in the spatial domain, then the residual \eqref{eq:ABAS:Coherence:CompResidual} might decay slowly. One can then either perform a global update in the sense that the number of sampling points is $\nms = \nh$ or perform multiple local updates. Since the adaptive sampling strategy selects the points with the largest residual, multiple local updates can have a similar effect as one global update.

\subsection{Practical considerations and algorithm}
\label{sec:ABAS:Algorithm}
We now provide details on the practical implementation of the \abas{} approach and summarize \abas{} in Algorithm~\ref{alg:ABAS}.

\subsubsection{Practical considerations}
The \abas{} model is initialized with a DEIM interpolant with basis matrix $\bfU_1$ and $\bfP_1$. We propose to construct $\bfU_1$ and $\bfP_1$ from a local trajectory computed with the full model. This means that the full model is solved for $\winit \in \mathbb{N}$ time steps, with $\winit \ll K$, and the initial DEIM interpolant $(\bfU_1, \bfP_1)$ is constructed from the corresponding full-model states. Initializing the \abas{} model with a DEIM interpolant obtained from full-model states has two benefits. First, no offline phase is necessary to initialize the \abas{} model, which means that it is unnecessary to develop (e.g., greedy) strategies to build an initial \abas{} model offline. Second, because there is no offline phase, the \abas{} model is initialized and then adapted for the parameter at hand. Thus, an explicit parametrization of the \abas{} model is unnecessary, which avoids the common challenges of parametrized model reduction \cite{RozzaPateraSurvey,SIREVSurvey} and sampling---potentially high-dimensional---parameter spaces. However, having no offline phase also means that information available from before the online phase is ignored. For example, if only certain regions in the parameter domain lead to transport-dominated phenomena, then a reduced basis constructed in the traditional way with, e.g., greedy methods \cite{prudhomme_reliable_2001,veroy_posteriori_2003,RozzaPateraSurvey}, could be used without adaptation for parameters corresponding to non-transport-dominated phenomena. Furthermore, one may argue that the initialization of the proposed approach with a few full-model states can be seen as an offline phase. The number of full-model states that are used for initialization is then an offline parameter that has to be chosen adequately. In all our experiments, the number of initial full-model states is set to a fixed value, which demonstrates that reduced models based on \abas{} seem to be robust with respect to the initial window size. A more detailed discuss is given in Section~\ref{sec:NumExp:Burgers:Performance} and Figure~\ref{fig:Burgers:InitWinSize}.

The adaptive sampling strategy as described in Section~\ref{sec:ABAS:LocalCoherence:AdaptSampling} requires the residual $\bfR_k$ at all $\nh$ components. To avoid computing the residual $\bfR_k$ at all components at each adaptation iteration, we adapt the sampling points at every $\nz$-th iteration instead. Thus, only at every $\nz$-th iteration, the residual $\bfR_k$ is computed at all components to adapt the sampling points, whereas at all other iterations, the residual is computed only at the sampling points. Note that even if the sampling points are adapted at each iteration $k = 1, \dots, K$, and so the residual is computed at each component, it still leads to a lower runtime to compute the ADEIM update only at $\nms < \nh$ sampling points, because the costs of computing the ADEIM update scales linearly in the number of sampling points $\nms$ \cite{Peherstorfer15aDEIM}.

One parameter of the proposed approach is the window size $\win$. The analysis developed in Section~\ref{sec:AADEIM:ErrorAnalysisUpdate} gives guidance on how to select the window size. Lemma~\ref{lm:ADEIM:SingValReduction} and Proposition~\ref{prop:Convergence} require that the columns of $\bfF_k \in \mathbb{R}^{\nh \times \win}$ and $\bfP^T\bfF_k \in \mathbb{R}^{\nh \times \win}$ span $\nr$-dimensional spaces. Thus, the condition $\win \geq \nr$ is imposed on the window size $\win$. At the same time, the discussion in Section~\ref{sec:ABAS:LocalStructure} indicates that a smaller window size $\win$ leads to faster decay of the singular values and thus to a potentially lower error of the adaptive reduced model. Given these two insights, we set $\win$ at least $\nr$ and not much larger than $\nr$ in the following. In the numerical examples in Section~\ref{sec:NumExp:Burgers} and Section~\ref{sec:NumExp:Combustion}, the window size is set to $\win = \nr + 1$, which avoids numerical problems if two columns of $\bfF_k$ are numerically linearly dependent. We refer to \cite{Peherstorfer15aDEIM} for further discussions and more extensive numerical studies regarding the window size and its effect on ADEIM.

\subsubsection{Algorithm}

\begin{algorithm}[t]
\caption{Adaptive bases and adaptive sampling (\abas{})}\label{alg:ABAS}
\begin{algorithmic}[1]
\Procedure{\abas{}}{$\bfq_0, \bff, \bfmu, \nr, \winit, \win, \nms, \nz, \nab$}
\State Solve full model for $\winit$ time steps $\bfQ = \texttt{solveFOM}(\bfq_0, \bff, \bfmu)$\label{alg:ABAS:SolveFOM}
\State Set $k = \winit+1$\label{alg:ABAS:StartROMInit}
\State Compute $\nr$-dimensional POD basis $\bfU_k$ of $\bfQ$\label{alg:ABAS:PODBasisConstruction}
\State Compute QDEIM interpolation points $\bfp_k = \texttt{qdeim}(\bfU_k)$\Comment{see appendix}
\State Initialize $\bfF = \bfQ[:, k-\win+1:k-1]$ and $\tilde{\bfq}_{k - 1} = \bfU_k^T\bfQ[:, k - 1]$\label{alg:ABAS:EndROMInit}
\For{$k = \winit + 1, \dots, K$}\label{alg:ABAS:ROMLoop}
\State Solve $\tilde{\bfq}_{k - 1} = \tilde{\bff}(\tilde{\bfq}_k; \bfmu)$ with DEIM interpolant with $\bfU_k$ and $\bfp_k$\label{alg:ABAS:SolveROM}
\State Store $\bfQ[:, k] = \bfU_k\tilde{\bfq}_k$
\If{$\operatorname{mod}(k, \nz) == 0 || k == \winit + 1$}\label{alg:ABAS:IfSampling}
\State Compute $\bfF[:, k] = \bff(\bfQ[:, k]; \bfmu)$\label{alg:ABAS:AdaptSamplingPointsStart}
\State $\bfR_k = \bfF[:, k - \win + 1:k] - \bfU_k(\bfU_k[\bfp_k, :])^{-1}\bfF[\bfp_k, k - \win + 1:k]$
\State $[\sim, \bfs_k] = \texttt{sort}(\texttt{sum}(\bfR_k.\widehat{~~}2, 2), \text{'descend'})$
\State Set $\breve{\bfs}_k = \bfs_k[\nms+1:\text{end}]$ and $\bfs_k = \bfs_k[1:\nms]$\label{alg:ABAS:AdaptSamplingPointsEnd}
\Else
\State Set $\bfs_k = \bfs_{k-1}$ and $\breve{\bfs}_k = \breve{\bfs}_{k - 1}$
\State Compute $\bfF[\bfs_k, k] = \bff(\bfQ[\bfs_k, k]; \bfmu)$\label{alg:ABAS:EvalResidualAtSamplingPoints}
\State Approximate $\bfF[\breve{\bfs}_k, k] = \bfU_k[\breve{\bfs}_k, :](\bfU_k[\bfp_k, :])^{-1}\bfF[\bfp_k, k]$\label{alg:ABAS:ApproxResidualAtSamplingPoints}
\EndIf
\State Set current window $\bfF_k = \bfF[:, k - \win + 1:k]$
\State Set $[\bfU_{k + 1}, \bfp_{k + 1}] = \texttt{adeim}(\bfU_k, \bfp_k, \bfs_k, \bfF_k[\bfp_k, :], \bfF_k[\bfs_k, :], \nab)$\Comment{see appendix}\label{alg:ABAS:AdaptBasisPoints}
\EndFor\\
\Return Return trajectory $\bfQ$
\EndProcedure
\end{algorithmic}
\end{algorithm}
Algorithm~\ref{alg:ABAS} gives details on the \abas{} approach by summarizing time stepping of a reduced model that uses the proposed adaptive basis updates and adaptive sampling. Helper functions used in Algorithm~\ref{alg:ABAS} are given in Appendix~\ref{app:Code}. Inputs of the algorithm are the initial condition $\bfq_0 \in \mathbb{R}^{\nh}$, the full-model function $\bff$ that describes the underlying dynamical system, and the parameter $\bfmu \in \Dcal$. Parameters of the approach are the dimension $\nr$ of the reduced space, the time step $\winit$ until which the full model is solved to initialize the reduced model, the window size $\win$ for the adaptation, the number of sampling points $\nms$, the frequency $\nz$ of the sampling points adaptation, and the rank of the update $\nab$.

Line~\ref{alg:ABAS:SolveFOM} solves the full model until time step $\winit$ to compute the corresponding trajectory $\bfQ_{\winit} = [\bfq_1, \dots, \bfq_{\winit}] \in \mathbb{R}^{\nh \times \winit}$. Lines~\ref{alg:ABAS:StartROMInit}--\ref{alg:ABAS:EndROMInit} initialize the reduced model by constructing the basis matrix $\bfU_k$ from $\bfQ_{\winit}$ and the vector of interpolation points $\bfp_k$. The loop in line~\ref{alg:ABAS:ROMLoop} iterates over the time steps $k = \winit + 1, \dots, K$ at which the reduced model is solved instead of the full model. Line~\ref{alg:ABAS:SolveROM} finds the reduced state $\tilde{\bfq}_k$ that satisfies the reduced model with respect to $\tilde{\bff}_k$ corresponding to the current DEIM interpolant with basis matrix $\bfU_k$ and interpolation points $\bfp_k$. The \texttt{if} branch on line~\ref{alg:ABAS:IfSampling} decides if either the sampling points are adapted or the sampling points from the previous iteration are reused. If the sampling points are adapted, then the full-model function $\bff$ is evaluated at all components and $\bfS_k$ is derived via the adaptive sampling strategy. Note that $\bfR_k.\widehat{~~}2$ is $\textsc{Matlab}$ notation and means that the entries of $\bfR_k$ are squared. If the sampling points are not adapted, then the full-model $\bff$ is evaluated at $\bfs_{k - 1}$ (see line~\ref{alg:ABAS:EvalResidualAtSamplingPoints}) and all other components are approximated (see line~\ref{alg:ABAS:ApproxResidualAtSamplingPoints}). The basis and the interpolation points are adapted with ADEIM at line~\ref{alg:ABAS:AdaptBasisPoints}. After the time stepping, the trajectory $\bfQ \in \mathbb{R}^{\nh \times K}$ is returned. The first $\winit$ columns in $\bfQ$ are computed with the full model and the subsequent $K - \winit + 1$ columns are computed with the adaptive reduced model.

\subsubsection{Costs and online efficiency of approach}
The proposed approach as summarized in Algorithm~\ref{alg:ABAS} has online computational costs that scale with the dimension $\nh$ of the full model. Thus, the proposed approach is \emph{not} online efficient in the sense of traditional model reduction; however, the proposed approach still has the potential to achieve speedups compared to static reduced models and full models because operations with costs that scale with $\nh$ are used economically and the online adaptation allows operating in lower dimensional reduced spaces than with static reduced models: First, the full model is solved for $\winit$ time steps at line~\ref{alg:ABAS:SolveFOM} in Algorithm~\ref{alg:ABAS}. The number of initial time steps $\winit$ is typically chosen orders of magnitude smaller than $K$, and thus only few time steps are computed with the full model. Furthermore, since $\winit \ll K$, constructing the POD basis at line~\ref{alg:ABAS:PODBasisConstruction} is typically cheap as well, even though the costs scale with $\nh$. Second, adapting the sampling points at lines~\ref{alg:ABAS:AdaptSamplingPointsStart}--\ref{alg:ABAS:AdaptSamplingPointsEnd} requires evaluating the full-model function $\bff$ at all $\nh$ components, which incurs costs that scale with $\nh$. The sampling points are adapted in $K/\nz$ iterations, which means that $K/\nz$ evaluations of the full-model function $\bff$ at all $\nh$ components are necessary. This shows that the speedup of the proposed approach is limited by the number of times the sampling points are adapted; however, evaluating $\bff$ is significantly cheaper than performing a time step with the full model, because the latter requires solving a nonlinear system of $\nh$ equations, whereas the former typically requires a function evaluation only. Third, the online adaptation typically enables operating in lower dimensional reduced spaces than with static reduced models if the full model exhibits transport-dominated phenomena. Thus, the computational costs of the nonlinear solve for computing the reduced state at the next time step (line~\ref{alg:ABAS:SolveROM} in Algorithm~\ref{alg:ABAS}) is kept low, which has the potential to lead to speedups compared to static reduced models and full models as demonstrated by the numerical results in Section~\ref{sec:NumExp}.

\section{Numerical results}
\label{sec:NumExp}
This section demonstrates model reduction based on \abas{} on three numerical examples. First, the toy example based on the advection equation introduced in Section~\ref{sec:Prelim:ProblemFormulation} is revisited in Section~\ref{sec:NumExp:AdVec}. Second, \abas{} is demonstrated on the Burgers' equation with a setup that leads to two interacting waves and time-varying viscosity and time-varying transport-direction coefficients. Third, we consider a model of a rocket combustor and demonstrate that the \abas{} approach achieves significant speedups in contrast to static reduced models that take even longer to run than the full model. All runtime results are computed with a \textsc{Matlab} 2017b implementation.

\begin{figure}
\centering{\huge\resizebox{0.8\columnwidth}{!}{\input{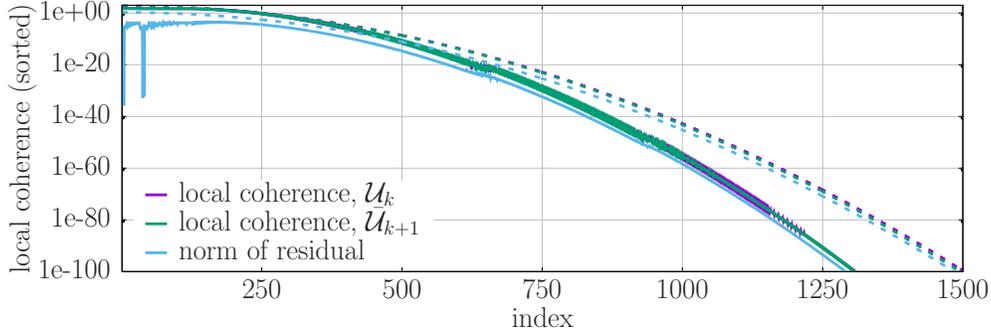}}}\\
\caption{Advection equation: The plot demonstrates that the norm of the rows of the residual $\bfR_k$ is bounded by the decay of the local coherence of the space $\Ucal_{k}$ and $\bar{\Ucal}_{k + 1}$ as proved in Lemma~\ref{prop:LinkCohResidual}. The solid curves are the norm of the residual and the local coherence, respectively, and the corresponding dashed curves are the bounds.}
\label{fig:AdVec:Coherence}
\end{figure}

\subsection{Advection equation}
\label{sec:NumExp:AdVec}
Consider the same setup as in Section~\ref{sec:Prelim:ProblemFormulation}. Set $k = 25$ and $\win = 25$ and let $\Ucal_k$ be the $\nr = 3$ dimensional DEIM space derived from $[\bfq_{k - \win + 1}, \dots, \bfq_{k}] \in \mathbb{R}^{\nh \times \win}$ and let $\bfU_k$ and $\bfP_k$ be the corresponding basis matrix and interpolation points matrix, respectively. Let now $\bar{\Ucal}_{k + 1}$ be the $\nr = 3$ dimensional space derived from $[\bfq_{76}, \dots, \bfq_{100}]$. Figure~\ref{fig:AdVec:Coherence} shows the local coherence $\gamma_i(\Ucal_k)$ and $\gamma_i(\bar{\Ucal}_{k + 1})$ for $i = 1, \dots, 1500$; the local coherence is sorted. The corresponding dashed curve are the bounds as in Lemma~\ref{prop:LinkCohResidual}. Let now $\tilde{\bfF}_k \in \mathbb{R}^{\nr \times \nr}$ have i.i.d.~standard Gaussian entries and consider $\bfF_k = \bar{\bfU}_{k + 1}\tilde{\bfF}_k$ and the corresponding residual $\bfR_k = \bfF_k - \bfU_k(\bfP_k^T\bfU_k)^{-1}\bfP_k^T\bfF_k$. The row-wise squared norm $\|\bfR^T_k\bfe_i\|_2^2$ of the residual $\bfR_k$ is plotted in Figure~\ref{fig:AdVec:Coherence}, together with the bound given by Lemma~\ref{prop:LinkCohResidual}. The results provide evidence that the decay of the residual is inherited from the decay of the local coherence as shown in Lemma~\ref{prop:LinkCohResidual}.

\begin{figure}
\centering{\huge\resizebox{0.8\columnwidth}{!}{\input{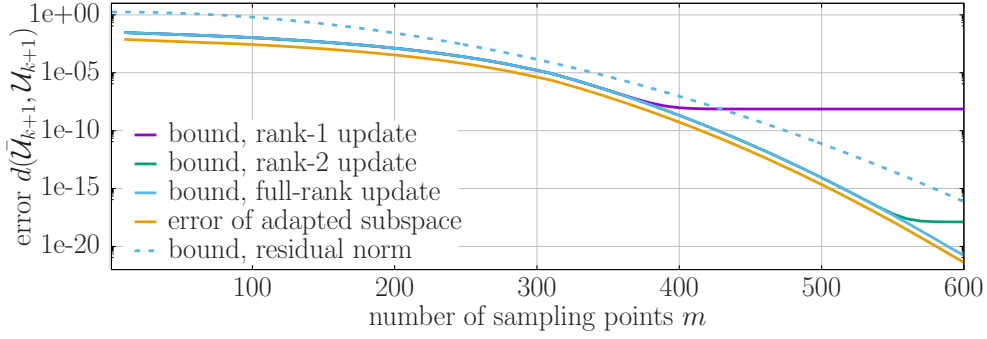}}}
\caption{Advection equation: The plot demonstrates that the fast decay of the error of the adapted space $d(\bar{\Ucal}_{k + 1}, \Ucal_{k + 1})$ is inherited from the fast decay of the residual. Furthermore, the plot shows that the bounds of the error of the adapted space stop decaying if a low-rank update is applied instead of a full-rank update.}
\label{fig:AdVec:ProjectionError}
\end{figure}

Consider now Figure~\ref{fig:AdVec:ProjectionError} that shows the error $d(\bar{\Ucal}_{k + 1}, \Ucal_{k + 1})$ of the adapted space $\Ucal_{k + 1}$, for ADEIM updates with rank $\nab = 1, 2, 3$, against the number of sampling points $\nms$. First, observe that if the rank is $\nab < 3$, then the bound of $d(\bar{\Ucal}_{k + 1}, \Ucal_{k + 1})$ levels off because the singular values $\sigma_{\nab + 1}, \dots, \sigma_{\bar{\nab}}$ dominate the decay factor $\rho_k$ \eqref{eq:ADEIMProp:FactorRho}. Second, the error $d(\bar{\Ucal}_{k + 1}, \Ucal_{k+1})$ is bounded by $\rho^2_k/\sigma_{\text{min}}^2(\bfF_k)$ as proved in Proposition~\ref{prop:Convergence}. Finally, the results show that the fast decay of the error of the adapted space with respect to the number of sampling points $\nms$ is inherited from the fast decay of the residual (shown as the dashed curve in Figure~\ref{fig:AdVec:ProjectionError}), which demonstrates Proposition~\ref{cor:NumberM}.

\subsection{Burgers' equation with time-varying viscosity}
\label{sec:NumExp:Burgers}
We now apply \abas{} to the Burgers' equation with time-varying viscosity and a transport direction that changes with time.

\begin{figure}
\begin{tabular}{c}
{\LARGE\resizebox{0.95\columnwidth}{!}{\input{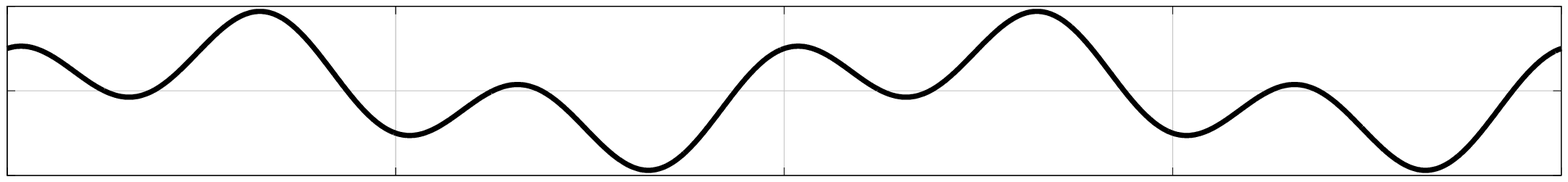}}}\\
{\scriptsize (a) viscosity coefficient}\\
{\LARGE\resizebox{0.95\columnwidth}{!}{\input{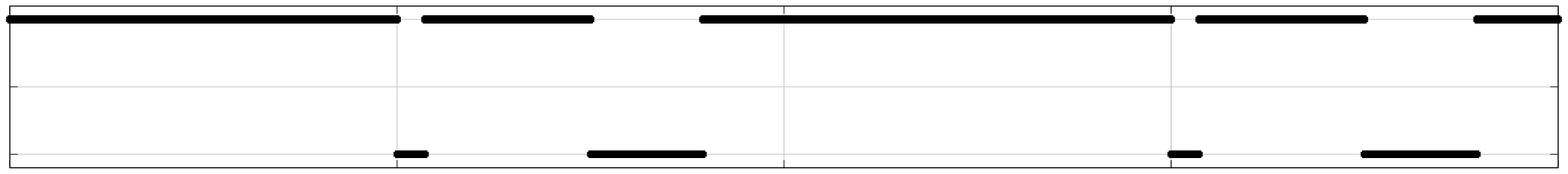}}}\\
{\scriptsize (b) transport direction}
\end{tabular}
\caption{Burgers' example: Plot (a) and (b) show the variation of the viscosity and the transport direction, respectively,  over time. Note that the viscosity \eqref{eq:Burgers:Nu} and transport coefficients \eqref{eq:Burgers:Eta} are periodic and therefore shown only up to time $t = 0.2$. The nominal viscosity is $\mu = 3 \times 10^{-3}$.}
\label{fig:Burgers:NuSign}
\end{figure}

\subsubsection{Problem setup}
\label{sec:NumExp:Burgers:ProblemSetup}
Let $\Omega = (-1, 1) \subset \mathbb{R}$ be the spatial domain, set $\bar{\Omega} = [-1, 1]$, and let $T = 1.5$ be end time. Consider now the Burgers' equation
\begin{equation}
\partial_t q(x, t) + \eta(t)q(x, t)\partial_x q(x, t) = \nu(t) \partial_{x}^2 q(x, t)\,,\qquad x \in \Omega\,,
\label{eq:NumExp:BurgersEq}
\end{equation}
with time $t \in [0, T]$, the solution function $q: \bar{\Omega} \times [0, T] \to \mathbb{R}$, the time-varying viscosity $\nu: [0, T] \to \mathbb{R}$, and the transport direction $\eta: [0, T] \to \{-1, 1\}$. The viscosity is
\begin{equation}
\nu(t) = \mu\left(\sin\left(20 \pi t\right) + \cos\left(60 \pi t\right) + 2\right)\,,
\label{eq:Burgers:Nu}
\end{equation}
where $\mu \in \Dcal \subset \mathbb{R}$ is the nominal viscosity parameter. The transport direction is
\begin{equation}
\eta(t) = \operatorname{sign}\left(\sin\left(20 \pi t\right) + \cos\left(60 \pi t\right) + 1\right)\,.
\label{eq:Burgers:Eta}
\end{equation}
The viscosity and transport direction change over time, as shown in Figure~\ref{fig:Burgers:NuSign} for $\mu = 3 \times 10^{-3}$. The PDE \eqref{eq:NumExp:BurgersEq} is closed with Dirichlet boundary conditions and the initial condition given by
\begin{equation}
q(x, 0) = \begin{cases}
2\,, & x = -2\\
1\,, & -\frac{1}{2} \leq x \leq -\frac{1}{3}\\
0\,, & \text{else}
\end{cases}\,.
\label{eq:NumExp:Burgers:IC}
\end{equation}

\begin{figure}
\begin{tabular}{cc}
{\Huge\resizebox{0.45\columnwidth}{!}{\input{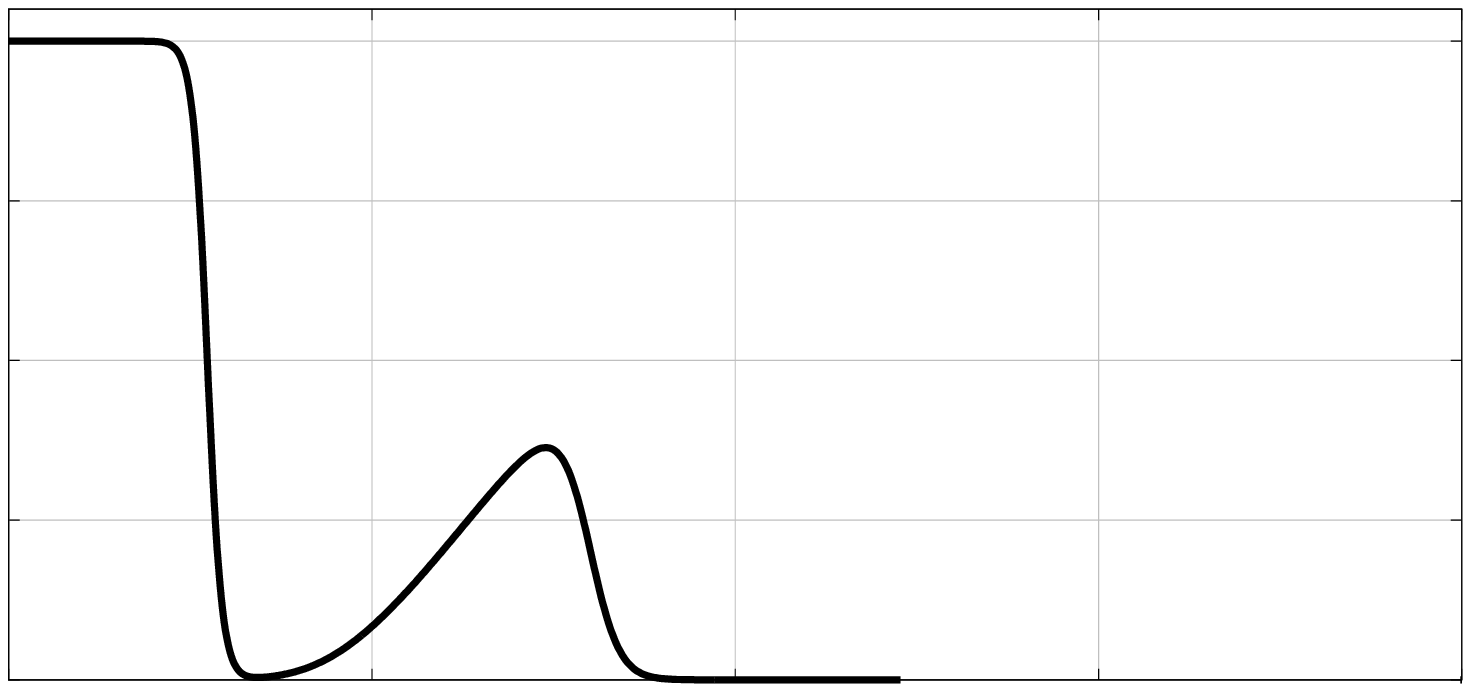}}} & {\Huge\resizebox{0.45\columnwidth}{!}{\input{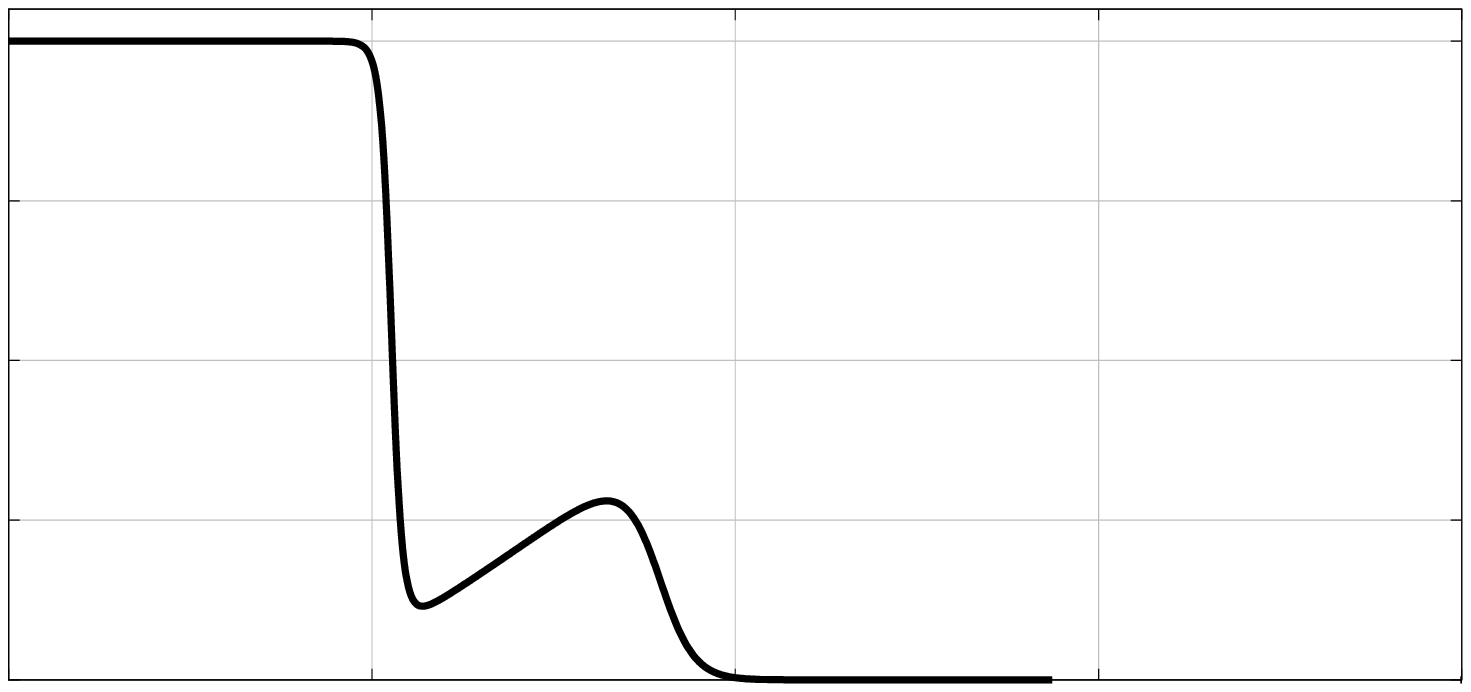}}}\\
{\scriptsize (a) time 0.4125} & {\scriptsize (b) time 0.775}\\
{\Huge\resizebox{0.45\columnwidth}{!}{\input{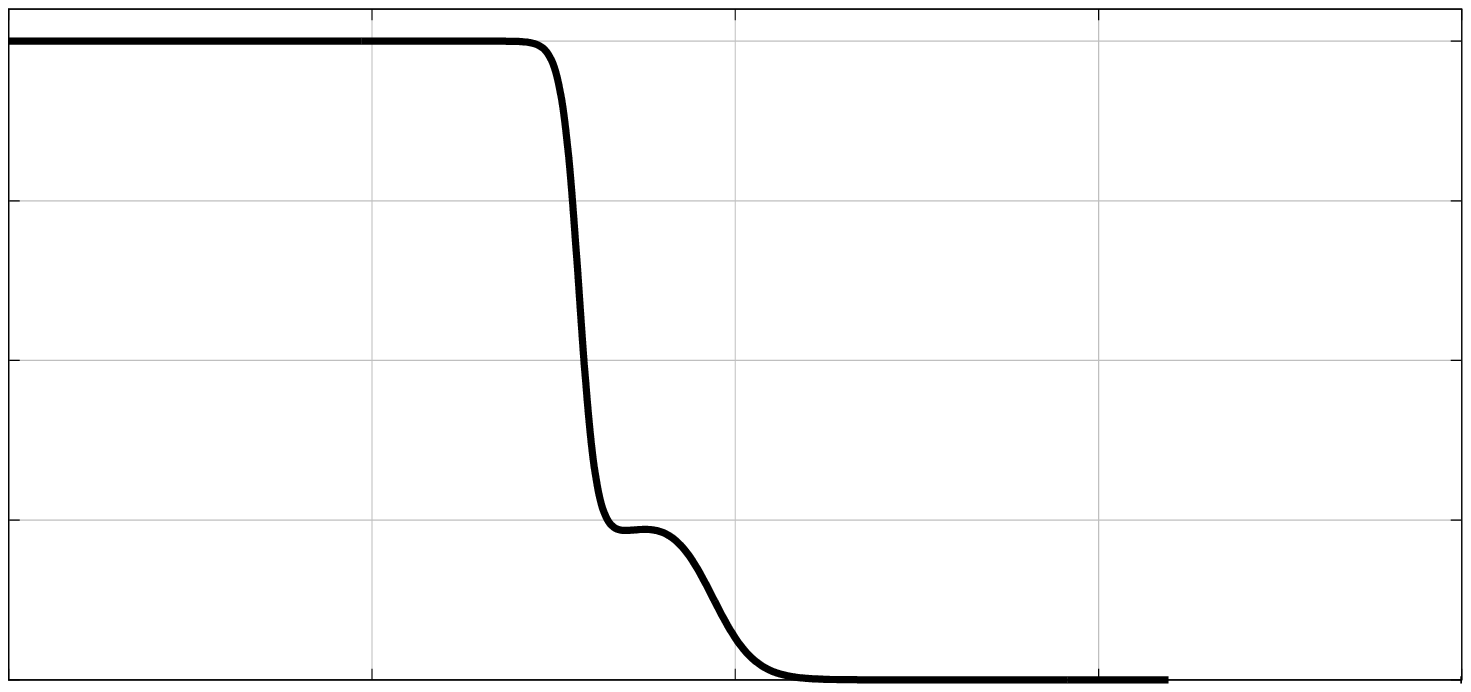}}} & {\Huge\resizebox{0.45\columnwidth}{!}{\input{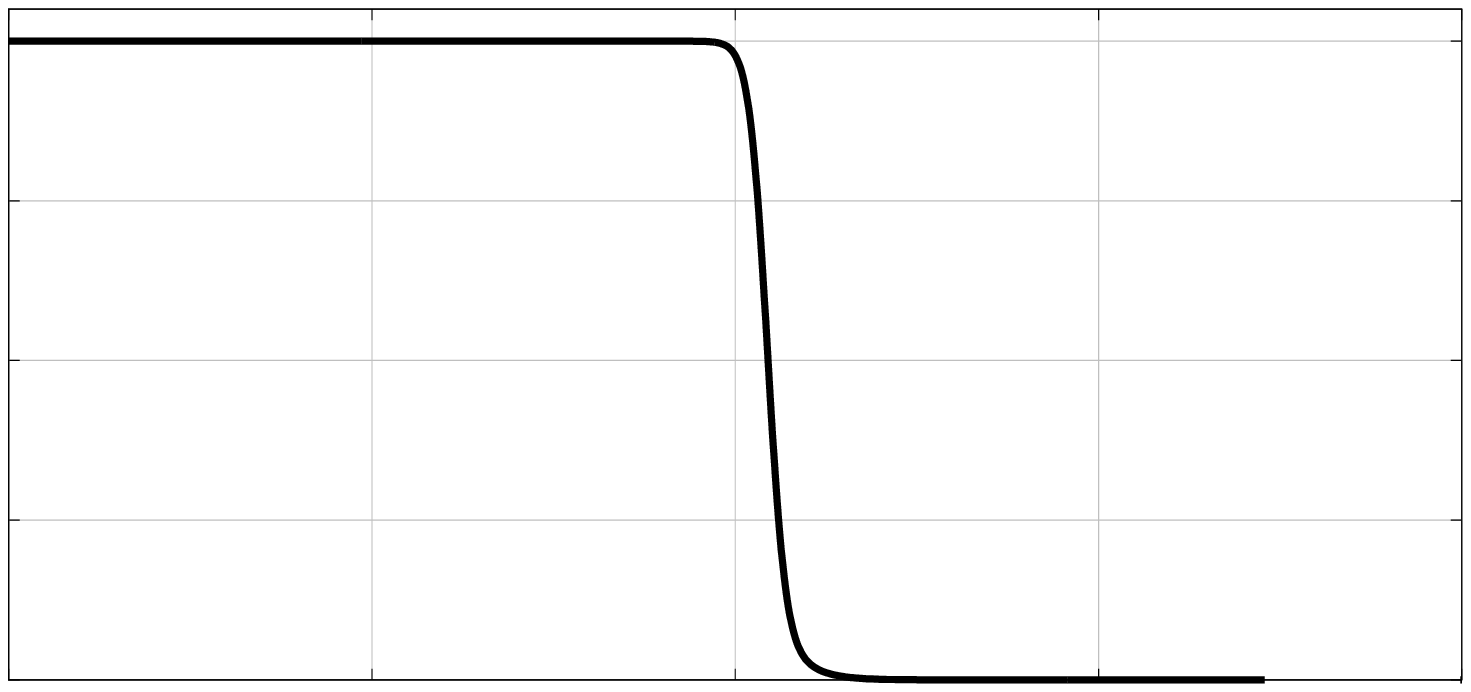}}}\\
{\scriptsize (c) time 1.1375} & {\scriptsize (d) time 1.5}
\end{tabular}
\caption{Burgers' example: The plots show the solution of the full model at different times in the time interval $[0, 1.5]$. The initial condition \eqref{eq:NumExp:Burgers:IC} leads to two waves propagating to the right, with the waves starting to interact around time $t = 0.7$. Note that the transport direction and the viscosity change over time, which cannot be seen in these plots, cf.~Figure~\ref{fig:Burgers:NuSign}.}
\label{fig:Burgers:SolutionsFOM}
\end{figure}

The full model is obtained by discretizing the spatial domain $\Omega$ of the PDE \eqref{eq:NumExp:BurgersEq} on an equidistant grid with $\nh = 1024$ inner grid points. A second-order finite-difference scheme is used. The time domain is discretized with the implicit Euler method and time step size $\delta t = 5 \times 10^{-5}$. In each time step, Newton's method is used to solve the corresponding system of nonlinear equations of the form \eqref{eq:Prelim:FOMProblem}. At each time step, 15 Newton iterations are performed with a step size of 1. Jacobians are derived analytically and returned in assembled form to the Newton routine. The linear solves in each Newton step are performed with \textsc{Matlab}'s backslash operator. No parallelization is used besides \textsc{Matlab}'s built-in parallelization for the backslash operator. The same linear-algebra routines, Newton scheme, and parallelization are used for time-stepping in all models. The solution of the full model for $\mu = 3 \times 10^{-3}$ is shown in Figure~\ref{fig:Burgers:SolutionsFOM}.

Figure~\ref{fig:Burgers:Residual}a reports the decay of the singular values corresponding to local trajectories of length $\win = 50$ at time $t = 0.77, 1.12, 1.50$, respectively, and parameter $\mu = 3 \times 10^{-3}$. The results show an orders of magnitude faster decay of the singular values of the local trajectories than of the global trajectory, which indicates that this problem exhibits a local low-rank structure. Figure~\ref{fig:Burgers:Residual}b shows the decay of the squared residual of approximating the solution at $t = 0.77, 1.12, 1.50$ at the corresponding local spaces of dimension $\nr = 8$ of the trajectories for which the singular values are plotted in Figure~\ref{fig:Burgers:Residual}a. A fast decay of the norm of the rows of the residual is observed.

\begin{figure}
\begin{tabular}{cc}
{\huge\resizebox{0.45\columnwidth}{!}{\input{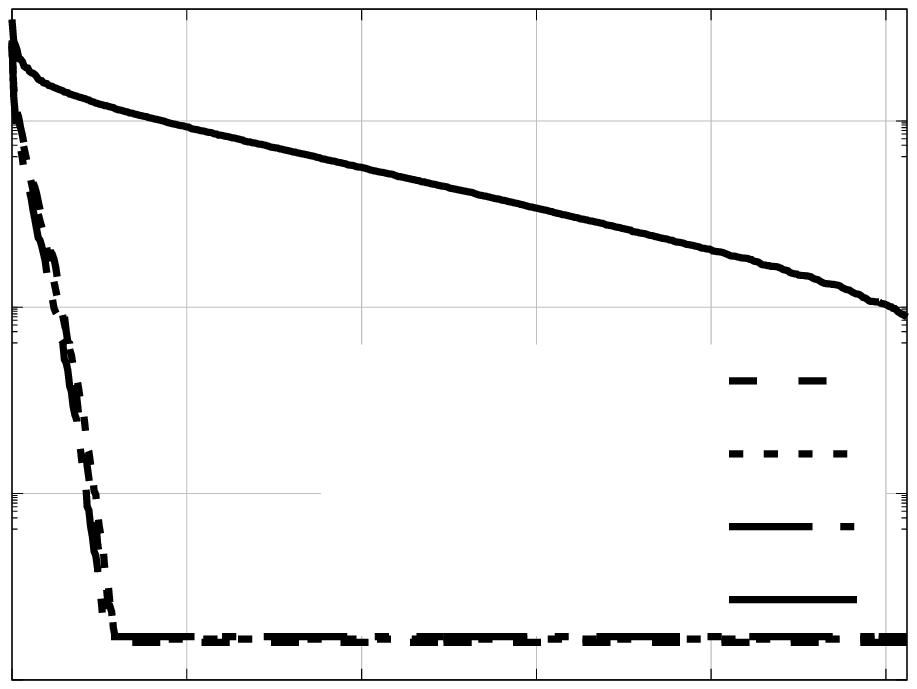}}} & {\huge\resizebox{0.45\columnwidth}{!}{\input{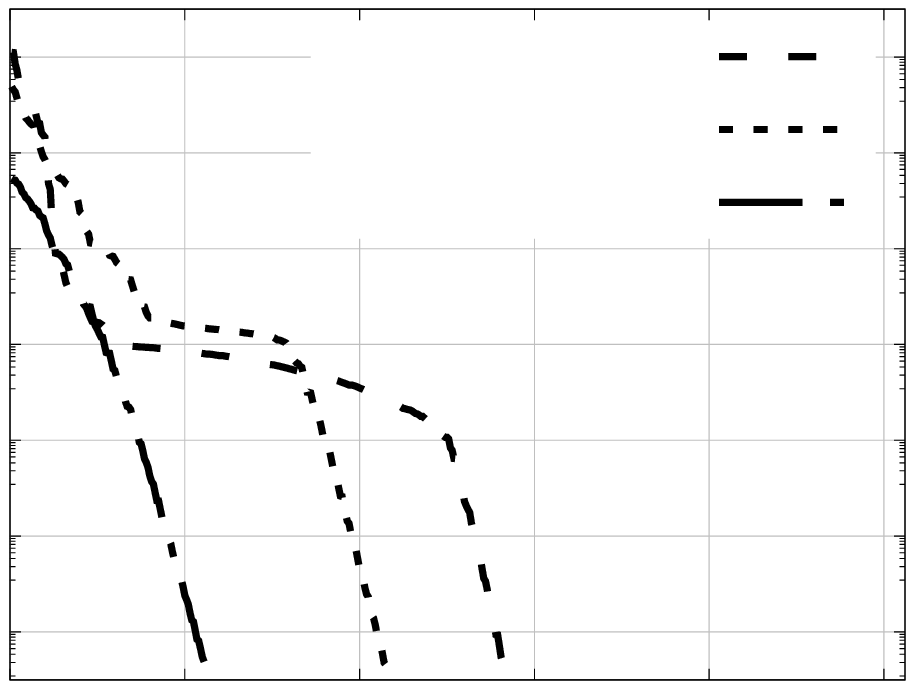}}}\\
{\scriptsize (a) singular values} & {\scriptsize (b) residual}
\end{tabular}
\caption{Burgers' example: The plots indicate that the singular values of local trajectories (see plot (a)) and the corresponding DEIM residual of the local spaces (see plot(b)) decay fast, cf.~Section~\ref{sec:ABAS:LocalStructure}.}
\label{fig:Burgers:Residual}
\end{figure}

\subsubsection{Performance of \abas{}}
\label{sec:NumExp:Burgers:Performance}
We now compare the runtime and accuracy of static reduced models, \abas{} models, and the full model. The static reduced models are derived from the trajectories corresponding to the parameters $\mu \in \{5 \times 10^{-3}, 10^{-3}, 5 \times 10^{-4}\}$, following the procedure outline in Section~\ref{sec:Prelim:MOR}. The dimension of the DEIM space is $\nr$ and the interpolation points are selected with QDEIM \cite{QDEIM}. The \abas{} models are initialized with the first $\winit = 100$ states computed with the full model. The dimension of the DEIM spaces of the \abas{} models is $\nr = 8$. The interpolation points are selected with QDEIM \cite{QDEIM}. The DEIM space $\bfU_k$ and the DEIM interpolation points $\bfP_k$ are adapted every other time step. The interpolation points $\bfP_k$ are adapted by applying QDEIM to the adapted basis. Only updates with rank $\nab = 1$ are applied. The window size is $\win = \nr + 1 = 9$. The sampling points are obtained with the adaptive sampling strategy described in Section~\ref{sec:ABAS:LocalCoherence}. The sampling points are adapted every $\nz = 5$ iteration, except otherwise noted. The error of the static and the \abas{} models are measured in the Frobenius norm
\begin{equation}
\operatorname{err}(\tilde{\bfQ}(\mu)) = \frac{\|\tilde{\bfQ}(\mu) - \bfQ(\mu)\|_F}{\|\bfQ(\mu)\|_F}\,,
\label{eq:NumExp:Burgers:Error}
\end{equation}
where $\bfQ(\mu)$ is the trajectory obtained with the full model at parameter $\mu$ and $\tilde{\bfQ}(\mu)$ is the trajectory obtained either with a static reduced model or an \abas{} model. For computing $\operatorname{err}(\tilde{\bfQ}(\mu))$, only states at time steps $k = 1, \dots, 1000$ and then for $k > 1000$ at every 50-th time step are taken into account.

\begin{figure}
\begin{tabular}{cc}
{\LARGE\resizebox{0.45\columnwidth}{!}{\input{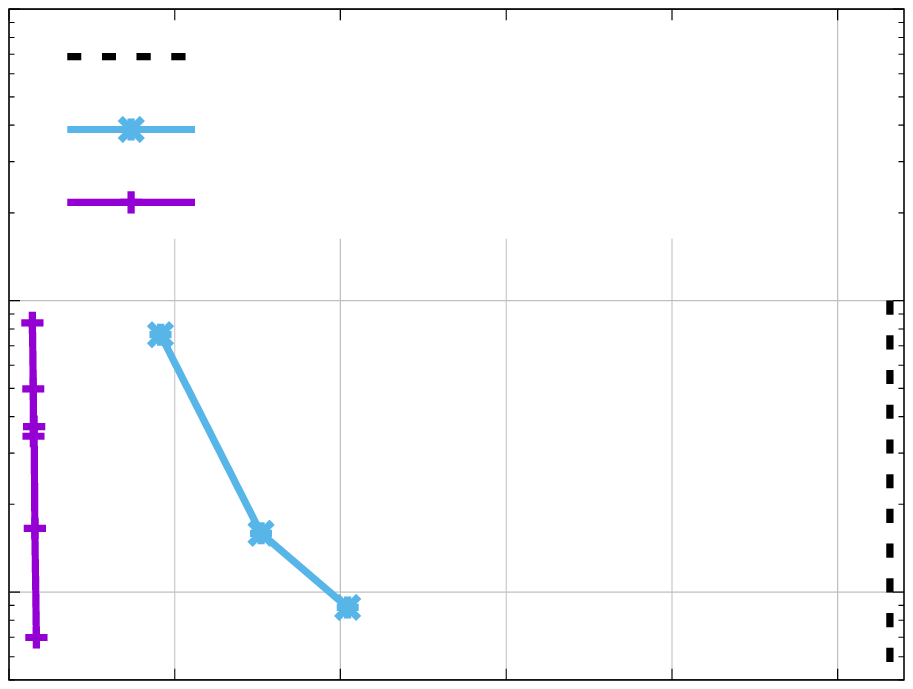}}} & {\LARGE\resizebox{0.45\columnwidth}{!}{\input{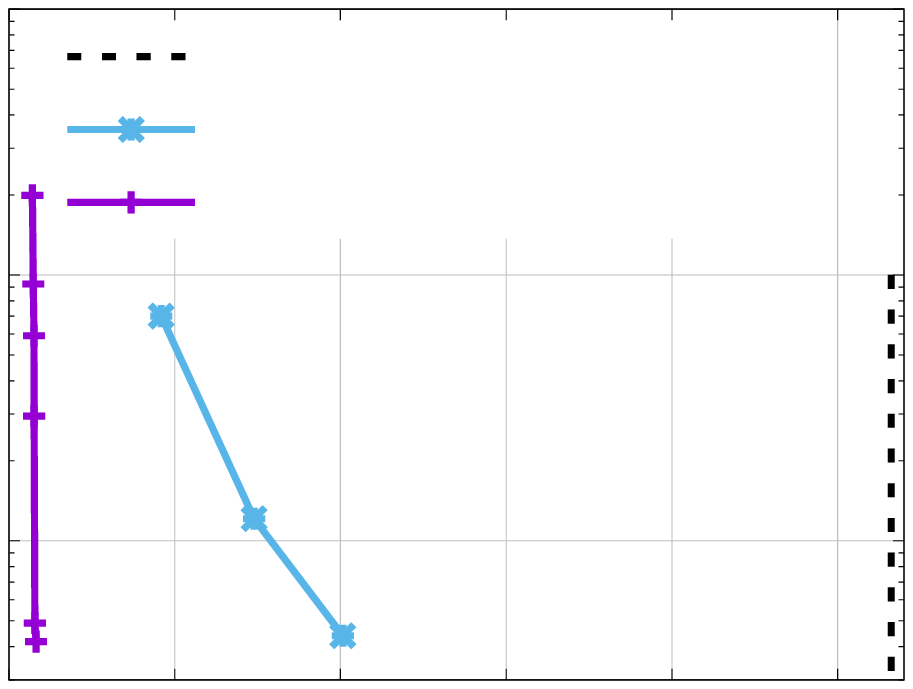}}}\\
{\scriptsize (a) parameter $\mu = 3 \times 10^{-3}$} & {\scriptsize (b) parameter $\mu = 8 \times 10^{-4}$}
\end{tabular}
\caption{Burgers' example: The plots show that \abas{} achieves a speedup of about one order of magnitude compared to the full model. About $\nms = 96$ sampling points out of $\nh = 1024$ are sufficient for the adaptive sampling strategy to obtain an \abas{} model with an error below $10^{-1}$. The results in these plots further show that increasing the number of samplings points only slightly increases the runtime, which indicates that computing the full-model function $\bff$ to update the DEIM space is computationally cheap in this example. The curves corresponding to the static reduced models show error/runtime for dimensions $\nr \in \{125, 175, 225\}$. The error/runtime of the \abas{} models is shown for $\nms \in \{96, 160, 224, 352, 480, 608\}$.}
\label{fig:Burgers:Speedup}
\end{figure}

Let us first consider parameter $\mu = 3 \times 10^{-3}$. Figure~\ref{fig:Burgers:Speedup}a shows the error $\operatorname{err}(\tilde{\bfQ}(\mu))$ of the static reduced models with $\nr \in \{125, 175, 225\}$ and of the \abas{} models with $\nms \in \{96, 160, 224, 352, 480, 608\}$ and dimension $\nr = 8$. The runtime results are obtained on compute nodes with Intel Xeon E5-1660v4 with 64GB RAM. The sampling points in the \abas{} models are adapted very $\nz$-th time step with $\nz = 5$. The dashed line in Figure~\ref{fig:Burgers:Speedup}a marks the runtime of the full model. The plot shows that the static reduced model achieves a speedup compared to the full model in this example; however, more than 100 dimensions are required for the DEIM space to achieve an error below $10^{-1}$. In contrast, the \abas{} model achieves errors below $10^{-1}$ with $\nr = 8$ dimensions and $\nms = 96$ sampling points, which leads to about an order of magnitude speedup compared to the full model. The plot in Figure~\ref{fig:Burgers:Speedup}a further indicates that increasing the number of sampling points from $\nms = 96$ to $\nms = 608$ reduces the error from $10^{-1}$ to $10^{-2}$ without a significant increase of the runtime, which provides evidence that evaluating the full-model function $\bff$ is significantly cheaper than solving the full model in this example. Figure~\ref{fig:Burgers:Speedup}b shows similar behavior of \abas{} for parameter $\mu = 8 \times 10^{-4}$. Figure~\ref{fig:Burgers:SpeedupHistogram}a summarizes the runtime of the static reduced model with dimension $\nr = 225$ and the \abas{} model with $\nms = 608$ sampling points, which are required to achieve an error \eqref{eq:NumExp:Burgers:Error} below $10^{-2}$.

\begin{figure}
\begin{tabular}{cc}
{\LARGE\resizebox{0.45\columnwidth}{!}{\input{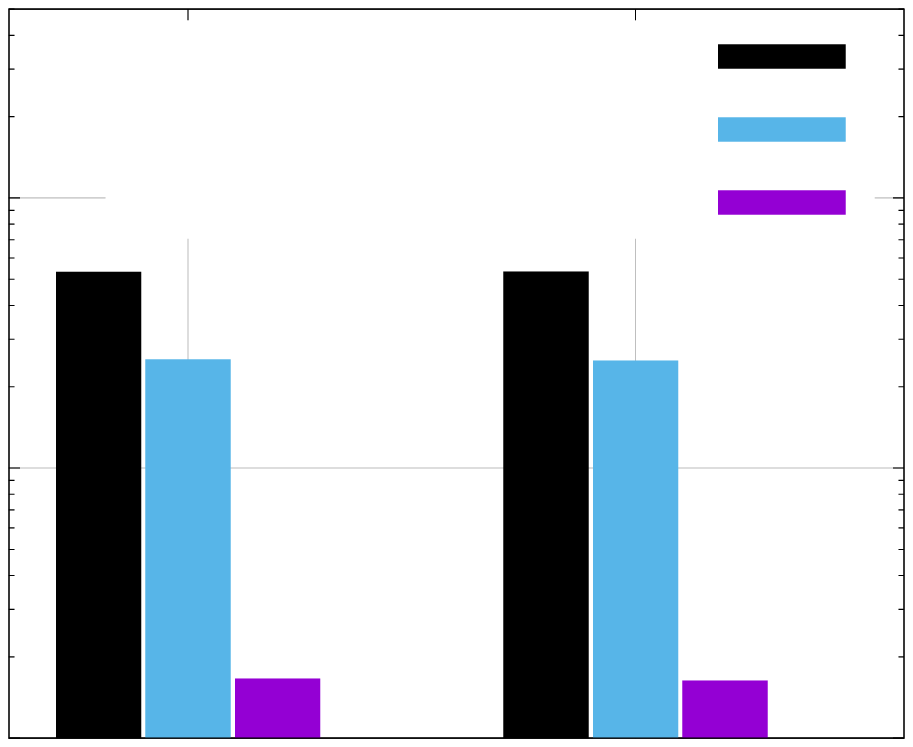}}} & {\LARGE\resizebox{0.45\columnwidth}{!}{\input{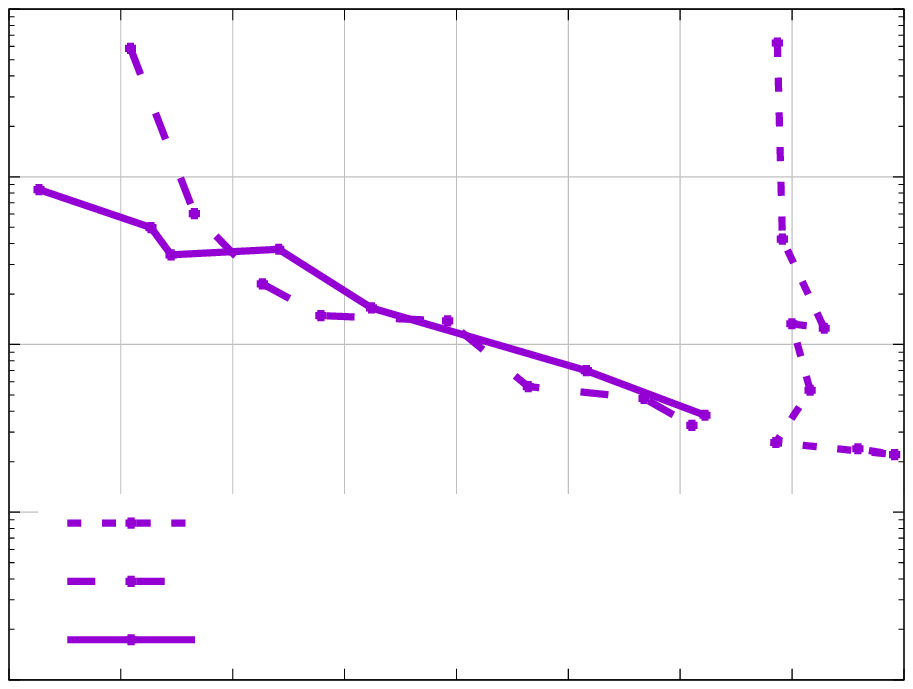}}}\\
{\scriptsize (a) runtime to achieve error \eqref{eq:NumExp:Burgers:Error} below $10^{-2}$} & {\scriptsize (b) sampling points adaptation}
\end{tabular}
\caption{Burgers' example: The plot in (a) shows that the \abas{} model achieves a speedup of about one order of magnitude compared to the static and the full model. Plot (b) indicates that adapting the sampling points every $5$-th time step, i.e., $\nz = 5$, is sufficient in this example.}
\label{fig:Burgers:SpeedupHistogram}
\end{figure}

\begin{figure}
\begin{center}
{\LARGE\resizebox{0.45\columnwidth}{!}{\input{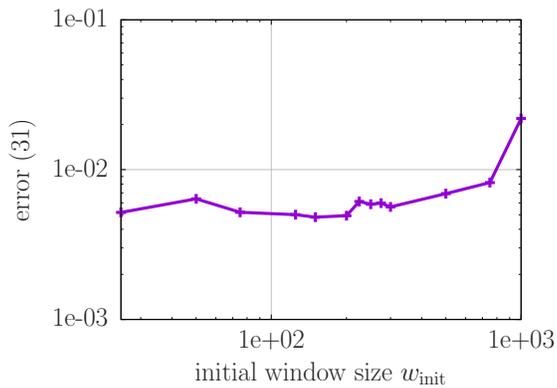}}}
\end{center}
\caption{Burgers' example: Plot shows that error \eqref{eq:NumExp:Burgers:Error} of the AADEIM model changes only slightly with the initial window size $\winit$ if it is near 100 in this example.}
\label{fig:Burgers:InitWinSize}
\end{figure}

We now investigate the effect of adapting the sampling points. The sampling points need to be adapted during the time stepping to ensure that components of the basis matrix corresponding to a high residual are updated. The frequency of adapting the sampling points is controlled by the parameter $z$ and has to be chosen depending on the problem at hand. Just as the time step size of the full model has to be chosen such that the coherent structure moves only so far in the spatial domain per time step that the full model remains stable, the frequency of adapting the sampling points $z$ has to be set such that sampling points can move along with the coherent structure to indicate high-residual components. Thus, if too few adaptations of the sampling points are performed, then high-residual components might be missed. At the same time, too many adaptations can become costly because each adaptation of the sampling points requires an evaluation of the full-model right-hand side function at all components. In Figure~\ref{fig:Burgers:SpeedupHistogram}b, we study the effect of the parameter $z$ on the error \eqref{eq:NumExp:Burgers:Error} of the \abas{} model for parameter $\mu = 3 \times 10^{-3}$. The sampling points are adapted at every time step ($z = 1$), at every $3$-rd time step ($z = 3$), and at every $5$-th time step ($z = 5$). The number of sampling points $\nms$ is varied $\nms = 96, 160, 224, 352, 480, 608$. First, note that,
for $\nz = 1$, i.e., the sampling points are adapted at each time step, the runtime is almost constant for an increasing number of sampling points.
Second, there is almost no difference in the error for adapting the sampling points at every $3$-rd iteration to adapting at every $5$-th iteration, which means that the sampling points are valid over several time steps in this example.

\begin{figure}
\begin{tabular}{cc}
{\LARGE\resizebox{0.45\columnwidth}{!}{\input{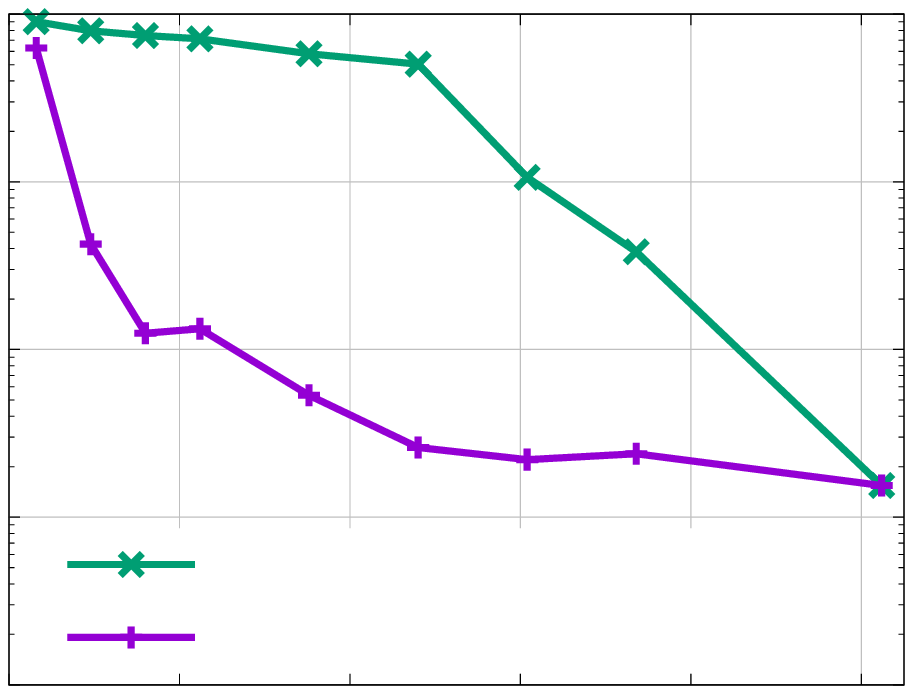}}} &{\LARGE\resizebox{0.45\columnwidth}{!}{\input{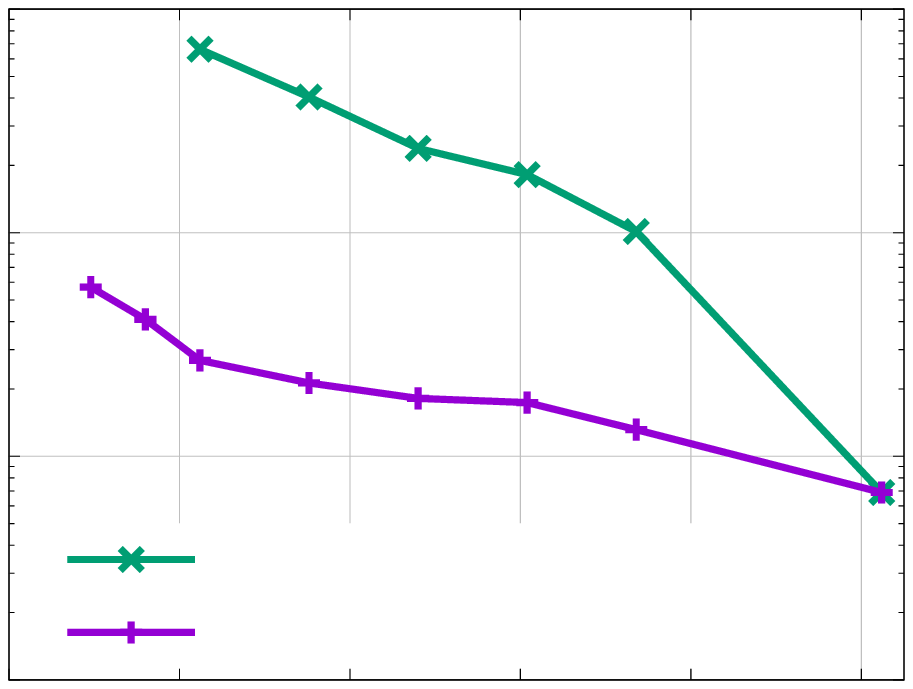}}} \\
{\scriptsize (a) parameter $\mu = 3 \times 10^{-3}$} & {\scriptsize (b) parameter $\mu = 8 \times 10^{-4}$}
\end{tabular}
\caption{Burgers' example: The proposed adaptive sampling strategy reduces the error \eqref{eq:NumExp:Burgers:Error} of the \abas{} model by up to two orders of magnitude compared to uniform sampling without replacement.}
\label{fig:Burgers:Uniform}
\end{figure}

Figure~\ref{fig:Burgers:InitWinSize} reports error \eqref{eq:NumExp:Burgers:Error} of the \abas{} model for different initial window sizes $\winit$. The setup of the \abas{} model is the same as for the runtime results reported in Figure~\ref{fig:Burgers:SpeedupHistogram}a with $\mu = 3 \times 10^{-3}$, i.e., dimension $\nr = 8$, number of sampling points $\nms = 608$, and sampling points are adapted every $5$-th time step. The initial window size $\winit$ is varied in $\{25, 50, 75, 125, 150, 200, 225, 250, 275, 300, 500, 750, 1000\}$. If the window size is set too large, then the local low-rank structure of the snapshots is lost and the correspondingly larger error in the first few time steps accumulates quickly, which can be observed in Figure~\ref{fig:Burgers:InitWinSize} by noting that the curve increases starting around $\winit = 750$. If the window size is chosen too small, then the initial basis might so poorly approximate the dynamics in the first few time steps that the basis adaptation cannot be initialized. Thus, the initial window size $\winit$ is a tuning parameter of the proposed approach; however, as the results in Figure~\ref{fig:Burgers:InitWinSize} show, the error \eqref{eq:NumExp:Burgers:Error} of the \abas{} model remains near $10^{-2}$ for a wide range of initial window sizes and therefore, in this example, the proposed approach is robust with respect to choices of the initial window sizes.

\subsubsection{Performance of adaptive sampling strategy}
We now compare adaptive sampling to random uniform sampling, which is used in, e.g., \cite{Peherstorfer15aDEIM}. Figure~\ref{fig:Burgers:Uniform} shows the error \eqref{eq:NumExp:Burgers:Error} of the \abas{} model if the sampling points are selected with the proposed adaptive sampling strategy and random uniform sampling of $\{1, \dots, \nh\}$. The random uniform sampling is without replacement. The curves in Figure~\ref{fig:Burgers:Uniform} correspond to $\nms \in \{32, 96, 160, 224, 352, 480, 608, 736, \nh\}$ sampling points. Uniform sampling with $\nms = 32$ leads to an unstable model in case of $\mu = 8 \times 10^{-4}$ and therefore its error is not plotted. The results in Figure~\ref{fig:Burgers:Uniform} indicate that adaptive sampling reduces the error \eqref{eq:NumExp:Burgers:Error} of the \abas{} model compared to uniform sampling. Improvements of up to two orders of magnitude can be observed. Note that the two sampling schemes coincide if all points $\nms = \nh$ are selected. Figure~\ref{fig:Burgers:ErrorDim} reports the error \eqref{eq:NumExp:Burgers:Error} of the \abas{} model for dimensions $\nr \in \{4, 6, 8, 10, 12\}$ with $\nms \in \{96, 224, 480\}$ sampling points (adaptive sampling strategy). The results indicate that a larger dimension of the DEIM space leads to a lower error only if sufficiently many sampling points are selected for the adaptation. For example, the \abas{} model with dimension $\nr = 12$ and $\nms = 480$ sampling points achieves an about one order of magnitude lower error than the \abas{} model with the same dimension and $\nms = 96$ sampling points.

\begin{figure}
\begin{tabular}{cc}
{\LARGE\resizebox{0.45\columnwidth}{!}{\input{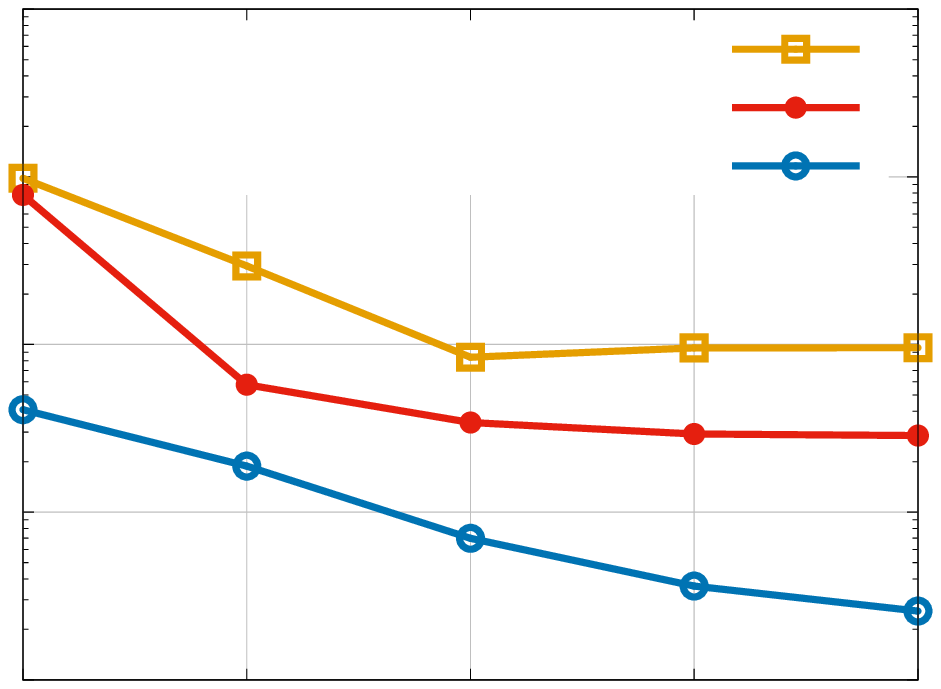}}} &  {\LARGE\resizebox{0.45\columnwidth}{!}{\input{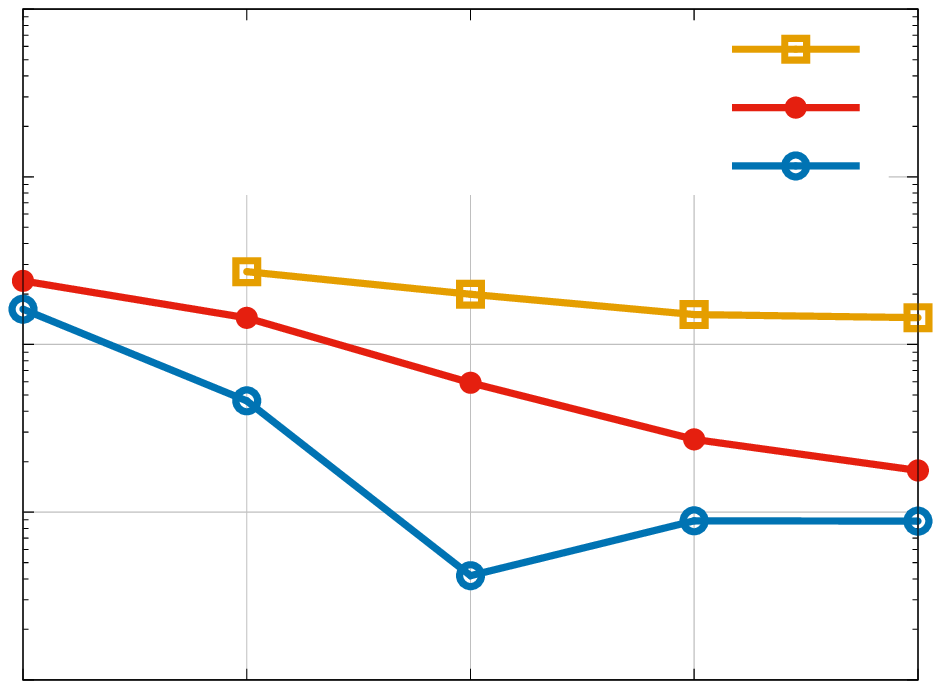}}}\\
{\scriptsize (a) parameter $\mu = 3 \times 10^{-3}$} & {\scriptsize (b) parameter $\mu = 8 \times 10^{-4}$}
\end{tabular}
\caption{Burgers' example: The results in these plots indicate that the dimension of the DEIM space and the number of sampling points $\nms$ need to be traded off with respect to each other. At least in this example, it seems that an increase in the dimension of the DEIM space needs to be accompanied by an increase in the number of sampling points to obtain an \abas{} model with a lower error.}
\label{fig:Burgers:ErrorDim}
\end{figure}

\subsubsection{Performance of \abas{} with respect to full-model dimension}
\label{sec:NumExp:Burgers:N}
We consider the same setup as described in Section~\ref{sec:NumExp:Burgers:ProblemSetup}, except that we now base the comparison on four full models corresponding to $\nh \in \{256, 512, 1024, 2048\}$. For all four full models, the time-step size is set to $10^{-5}$ to avoid instabilities of the full model with dimension $\nh = 2048$, in contrast to Section~\ref{sec:NumExp:Burgers:Performance} where the time-step size is $5 \times 10^{-5}$. We derive static reduced models and \abas{} models for each of the four full models with the same procedure as described in Section~\ref{sec:NumExp:Burgers:ProblemSetup} and Section~\ref{sec:NumExp:Burgers:Performance}, respectively. The dimension of the static reduced models is 125 for $\nh = 256$, 225 for $\nh = 512$, 175 for $\nh = 1024$, and 200 for $\nh = 1024$ so that an error \eqref{eq:NumExp:Burgers:Error} of less than $10^{-2}$ is achieved for parameters $\mu = 3 \times 10^{-3}$ and $\mu = 8 \times 10^{-4}$. The dimension of the \abas{} models is 8 and the numbers of sampling points are 32, 96, 224, and 736 for full-model dimension 256, 512, 1024, and 2048, respectively, which leads to error \eqref{eq:NumExp:Burgers:Error} of less than $10^{-2}$ for parameter $\mu = 3 \times 10^{-3}$ and parameter $\mu= 8 \times 10^{-4}$. As in Section~\ref{sec:NumExp:Burgers:Performance}, the adaptive sampling strategy is used and the sampling points are adapted every $5$-th time step. Runtimes were measured on compute nodes with Intel Xeon E5-2690v4 2.6GHz and 32GB RAM. The results in Figure~\ref{fig:Burgers:ErrorDimN}a show that speedups of about one order of magnitude are achieved by the \abas{} models in all four cases of $\nh \in \{256, 512, 1024, 2048\}$ for parameter $\mu = 3 \times 10^{-3}$. Figure~\ref{fig:Burgers:ErrorDimN}b reports the error \eqref{eq:NumExp:Burgers:Error} versus the ratio $\nms/\nh$. The reported results demonstrate that the error \eqref{eq:NumExp:Burgers:Error} is similar for all $\nh \in \{256, 512, 1024, 2048\}$ with respect to the ratio $\nms/\nh$. Note that the number of grid points within a neighborhood of the coherent structure grows with the full-model dimension $\nh$, and thus the number of sampling points $\nms$ has to grow with $\nh$ as well. However, the results in Figure~\ref{fig:Burgers:ErrorDimN}b further indicate that the number of sampling points $\nms$ has to grow linearly with $\nh$ only. Furthermore, note that even though the number of sampling points $\nms$ grows with $\nh$, for all $\nh \in \{256, 512, 1024, 2048\}$, about the same speedup is achieved by \abas{} as shown in Figure~\ref{fig:Burgers:ErrorDimN}a. Similar conclusions are drawn from the results corresponding to parameter $\mu = 8 \times 10^{-4}$ in Figure~\ref{fig:Burgers:ErrorDimN}c and Figure~\ref{fig:Burgers:ErrorDimN}d.

\begin{figure}
\begin{tabular}{cc}
{\LARGE\resizebox{0.45\columnwidth}{!}{\input{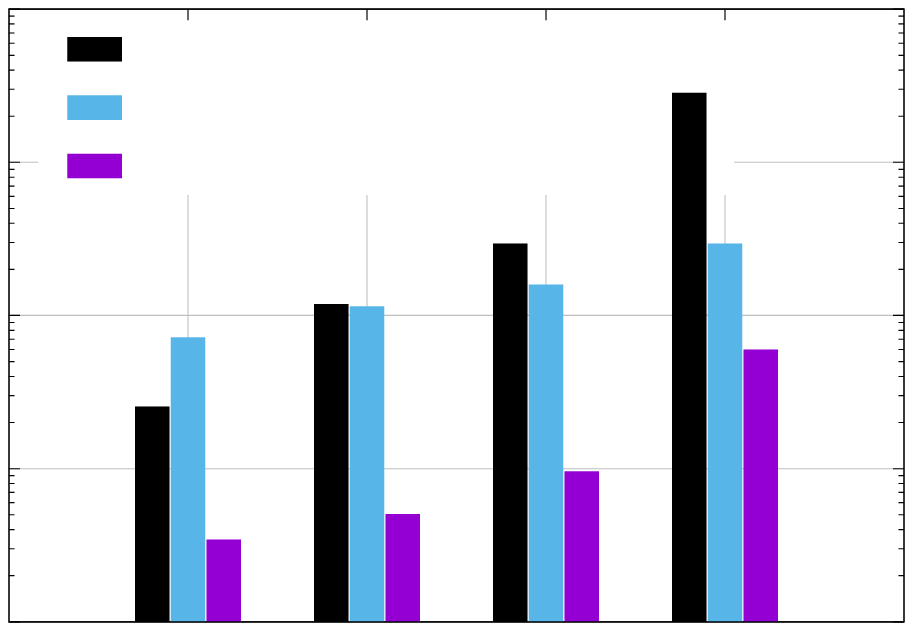}}} &  {\LARGE\resizebox{0.45\columnwidth}{!}{\input{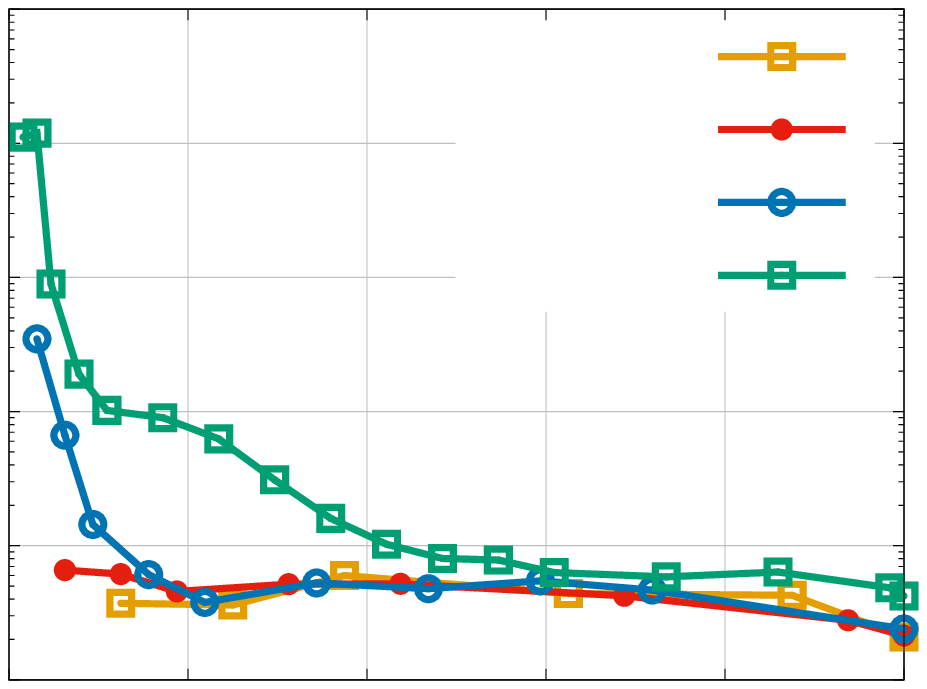}}}\\
{\scriptsize (a) $\mu = 3 \times 10^{-3}$, runtime speedup} & {\scriptsize (b) $\mu = 3 \times 10^{-3}$, number of sampling points}\\
{\LARGE\resizebox{0.45\columnwidth}{!}{\input{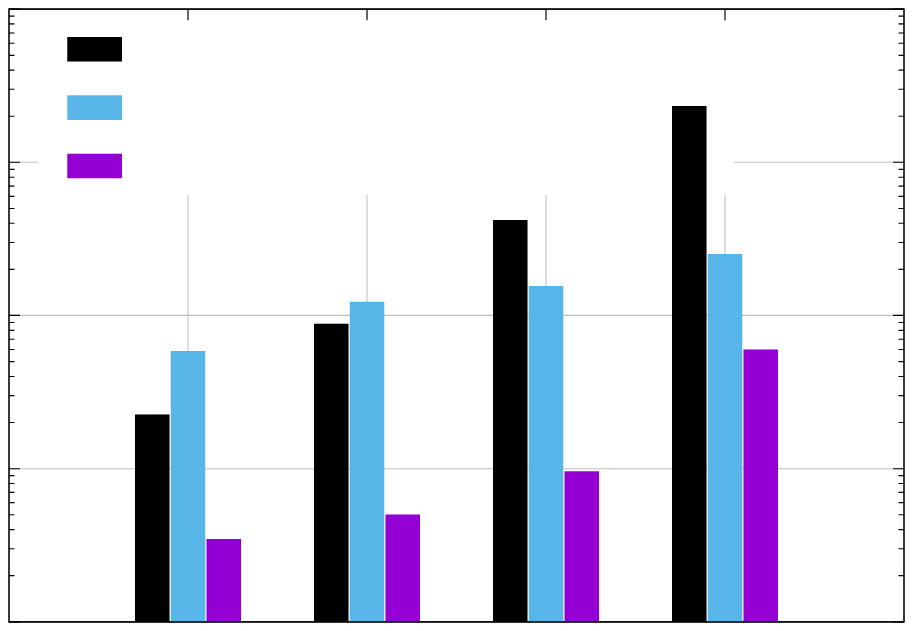}}} &  {\LARGE\resizebox{0.45\columnwidth}{!}{\input{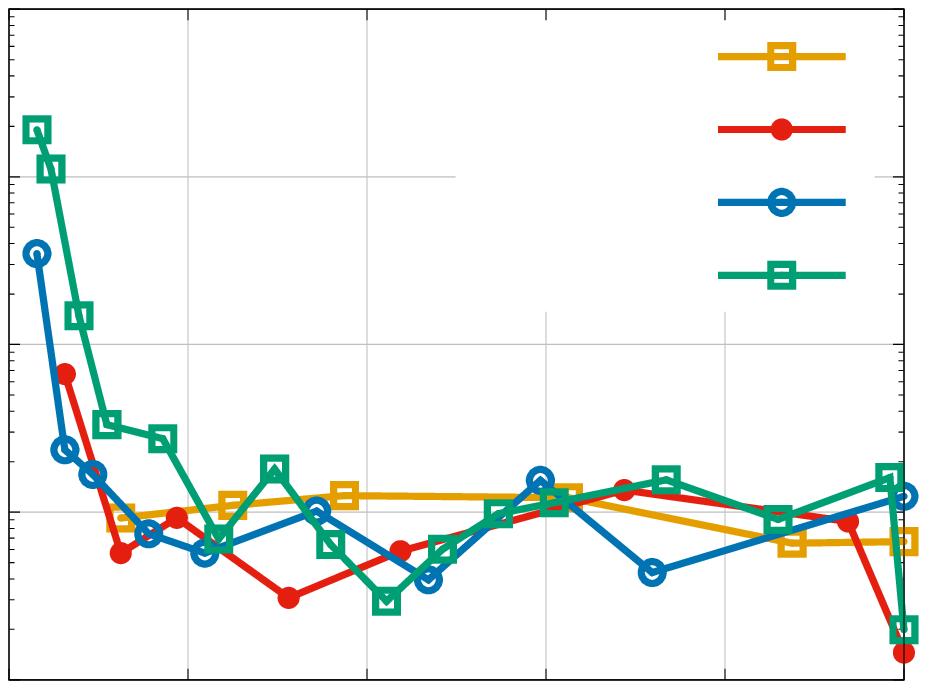}}}\\
{\scriptsize (c) $\mu = 8 \times 10^{-4}$, runtime speedup} & {\scriptsize (d) $\mu = 8 \times 10^{-4}$, number of sampling points}\\
\end{tabular}
\caption{Burgers' example: Plots (a),(c) report speedups of the AADEIM model for full-model dimensions $\nh \in \{256, 512, 1024, 2048\}$. In all cases, the AADEIM model achieves about one order of magnitude speedup. Plots (b),(d) report the error \eqref{eq:NumExp:Burgers:Error} with respect to the ratio $\nms/\nh$, which shows similar behavior for all full-model dimensions $\nh \in \{256, 512, 1024, 2048\}$.}
\label{fig:Burgers:ErrorDimN}
\end{figure}

Figure~\ref{fig:Burgers:TimeFine} shows the runtime of the full, static, and \abas{} models for $\nh = 1024$. The runtime of the full model and the static reduced model is partitioned into 4 parts that correspond to evaluating the right-hand side function (``RHS''), computing the Jacobian (``Jacobian''), and solving the corresponding linear system (``solve'') in each Newton iteration. The rest of the runtime is combined into overhead (``overhead''). The runtime of the \abas{} models is split into 7 parts that include the ones mentioned previously and additionally the runtime of sampling the right-hand side function and adapting the sampling points (``sample''), adapting the basis (``adapt $U$''), and computing the QDEIM interpolation points (``adapt $P$''). The overhead includes the runtime of initializing the \abas{} models with $\winit$ many time steps with the full model. The results in Figure~\ref{fig:Burgers:TimeFine} show that the full model spends most of the runtime in solving the linear system in each Newton iteration (``solve''). The \abas{} models reduce the runtime of solving the linear system drastically and so achieve about one order of magnitude speedup. When comparing the runtime of the \abas{} models with $\nms = 96, 224, 736$, one sees that the share of the runtime spent on sampling (``sample'') increases. The runtime due to the adaption of the basis (``adapt $U$'') and computing the QDEIM interpolation points (``adapt $P$'') is low compared to sampling. In particular, this means that using other methods for adapting reduced bases than ADEIM will have a minor effect on the overall runtime. To summarize, the results in Figure~\ref{fig:Burgers:TimeFine} indicate that \abas{} achieves runtime speedups in this example because the time for the linear solves in the Newton iterations is reduced compared to full and static reduced models.

\begin{figure}
{\LARGE\resizebox{1\columnwidth}{!}{\input{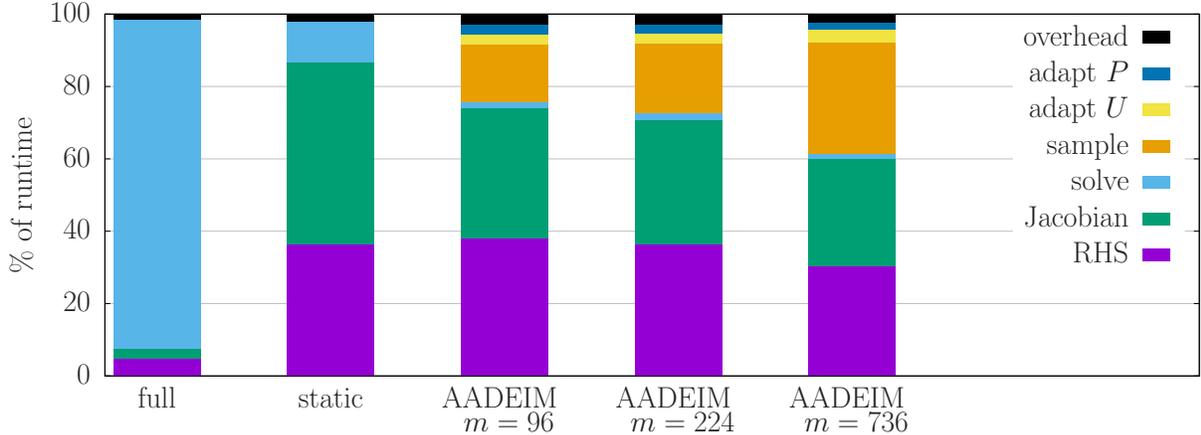}}}
\caption{The plot reports that \abas{} models spend less time on solving the system of nonlinear equations in each time step than the full model, which is one reason for the speedups of the \abas{} models compared to the full and static reduced models reported in Figure~\ref{fig:Burgers:ErrorDimN}a.}
\label{fig:Burgers:TimeFine}
\end{figure}

\subsubsection{Performance of \abas{} with respect to updates with SVD}
Consider a procedure Full+SVD that follows \abas{} except that the full-model right-hand side function $\bff$ is evaluated at all components at all time steps and the adapted basis is computed with a thin SVD. This means that the procedure Full+SVD follows Algorithm~\ref{alg:ABAS} except that lines 10-19 are replaced with $\bfF[:, k] = \bff(\bfQ[:, k]; \bfmu)$ and the new basis is computed with a thin SVD from $\bfF_k$ instead of ADEIM at line 21. The interpolation points $\bfP_k$ are computed with QDEIM, cf.~Algorithm~\ref{alg:qdeim}. Thus, the procedure Full+SVD evaluates the right-hand side function $\bff$ at all components rather than using local sampling to update the DEIM basis as \abas{}. The new basis is computed with $\textsc{Matlab}$'s implementation of the thin SVD.

Consider now the same setup as in Section~\ref{sec:NumExp:Burgers:N} to compare Full+SVD with \abas{}. In case of \abas{}, we adapt the basis every other time step as in Section~\ref{sec:NumExp:Burgers:N}. In case of the procedure Full+SVD, the basis is adapted every third time step so that a comparable error \eqref{eq:NumExp:Burgers:Error} is achieved as with \abas{}; see Figure~\ref{fig:Burgers:SVDDimN}a and Figure~\ref{fig:Burgers:SVDDimN}c for $\mu = 3 \times 10^{-3}$ and $\mu = 8 \times 10^{-4}$, respectively. We compare the runtime $T_A$ of \abas{} to the runtime $T_S$ of Full+SVD. The runtime reduction $T_R$ achieved by \abas{} is
\begin{equation}
T_R = \frac{T_S - T_A}{T_S} \times 100\,.
\label{eq:Burgers:RuntimeRed}
\end{equation}
The runtime reduction $T_R$ is shown in Figure~\ref{fig:Burgers:SVDDimN}b and Figure~\ref{fig:Burgers:SVDDimN}d. \abas{} achieves a runtime reduction of up to 30\% compared to Full+SVD because \abas{} requires computing the full-model right-hand side function $\bff$ at only a few components whereas Full+SVD evaluates the right-hand side function at all components. The costs of adapting the bases by either \abas{} or computing a thin SVD is negligible compared to the costs of to evaluating the right-hand side function, cf.~Figure~\ref{fig:Burgers:TimeFine}.

\begin{figure}
\begin{tabular}{cc}
{\LARGE\resizebox{0.45\columnwidth}{!}{\input{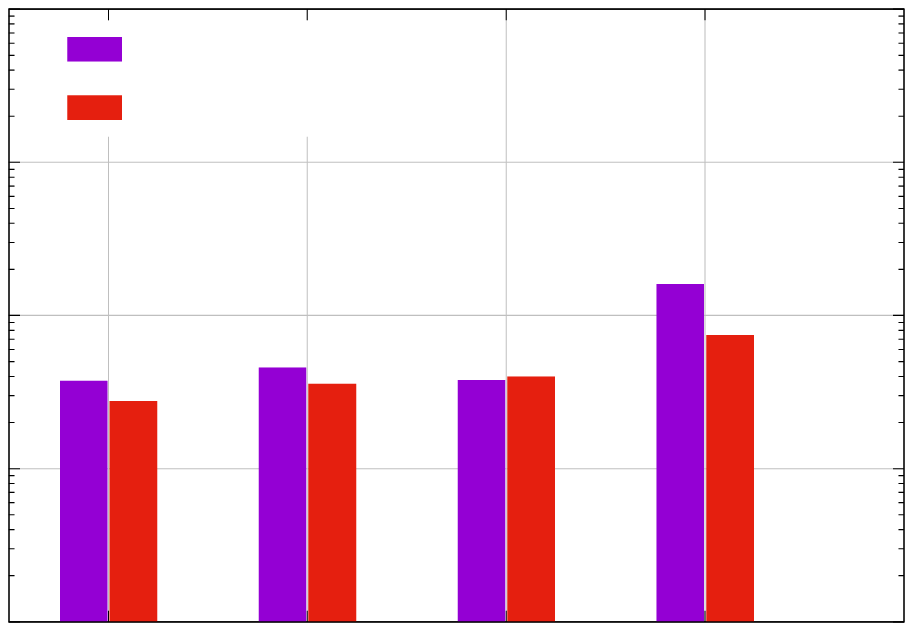}}} & {\LARGE\resizebox{0.45\columnwidth}{!}{\input{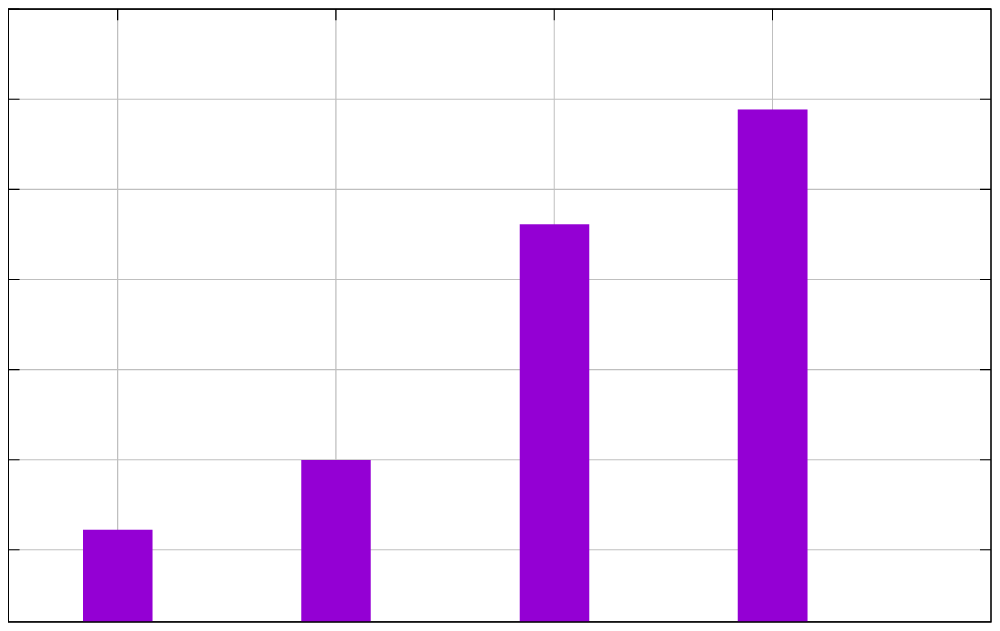}}} \\
{\scriptsize (a) error, $\mu = 3 \times 10^{-3}$} & \hspace*{-0.5cm}{\scriptsize (b) runtime reduction with \abas{}, $\mu = 3 \times 10^{-3}$}\\
{\LARGE\resizebox{0.45\columnwidth}{!}{\input{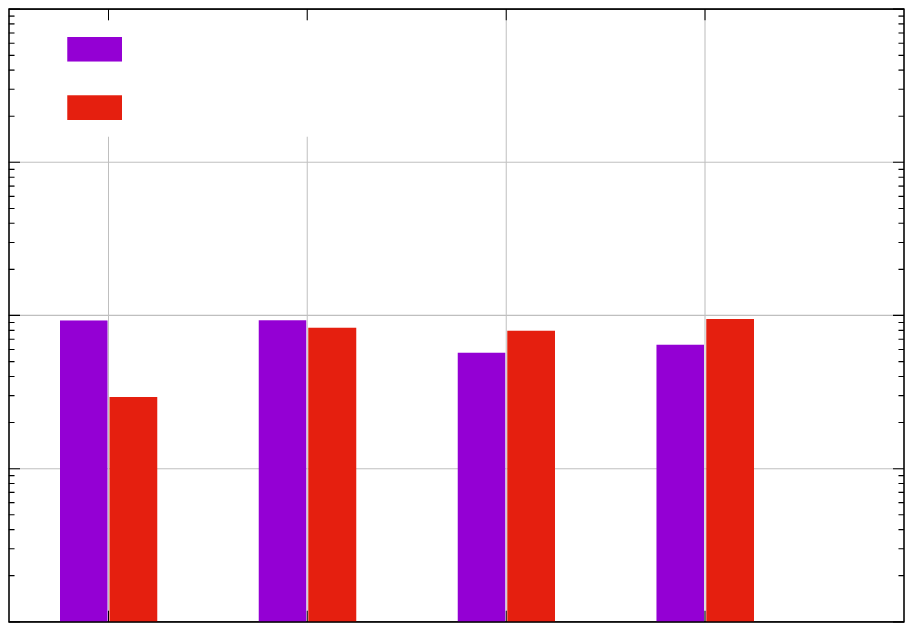}}} & {\LARGE\resizebox{0.45\columnwidth}{!}{\input{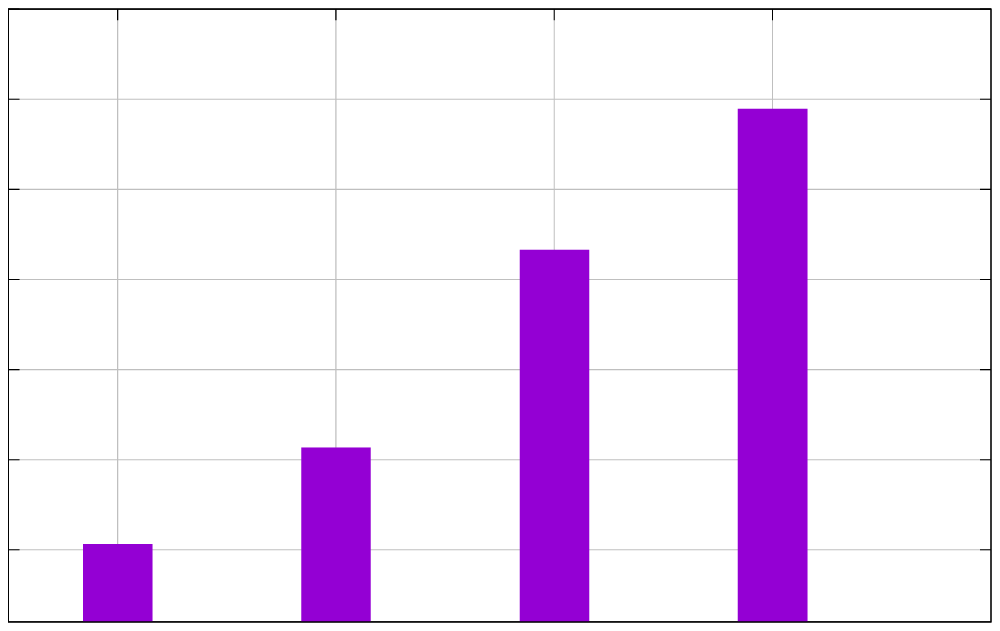}}}\\
{\scriptsize (c) error, $\mu = 8 \times 10^{-4}$} & \hspace*{-0.5cm}{\scriptsize (d) runtime reduction with \abas{}, $\mu = 8 \times 10^{-4}$}
\end{tabular}
\caption{Burgers' example: Plots (a) and (c) report that \abas{} achieves a comparable error \eqref{eq:NumExp:Burgers:Error} as Full+SVD, which evaluates the full-model right-hand side function at all components and then computes basis updates with a thin SVD. Plots (b) and (d) show that \abas{} achieves a runtime reduction \eqref{eq:Burgers:RuntimeRed} of up to 30\% compared to Full+SVD because \abas{} evaluates the full-model right-hand side function at a few components only.}
\label{fig:Burgers:SVDDimN}
\end{figure}

Figure~\ref{fig:Burgers:SVDLessDimN} reports results for Full+SVD if the full-model right-hand side function $\bff$ is evaluated at all components at every third time step only, rather than at all time steps as in the previous numerical experiment. Because of the fewer evaluations of $\bff$, Full+SVD is 2--8\% faster than \abas{}; see Figure~\ref{fig:Burgers:SVDLessDimN}a. However, the runtime decrease of Full+SVD, because of fewer evaluations of $\bff$, leads to poorer updates, which is shown in Figure~\ref{fig:Burgers:SVDLessDimN}b where now Full+SVD is up to one order of magnitude worse than \abas{} with respect to error \eqref{eq:NumExp:Burgers:Error}. Note that \abas{} evaluates $\bff$ at all components at every 5-th time step to update the sampling points and takes sparse evaluations of $\bff$ at all other time steps, which is the same setup as in the previous experiment.

\begin{figure}\begin{tabular}{cc}
 {\LARGE\resizebox{0.45\columnwidth}{!}{\input{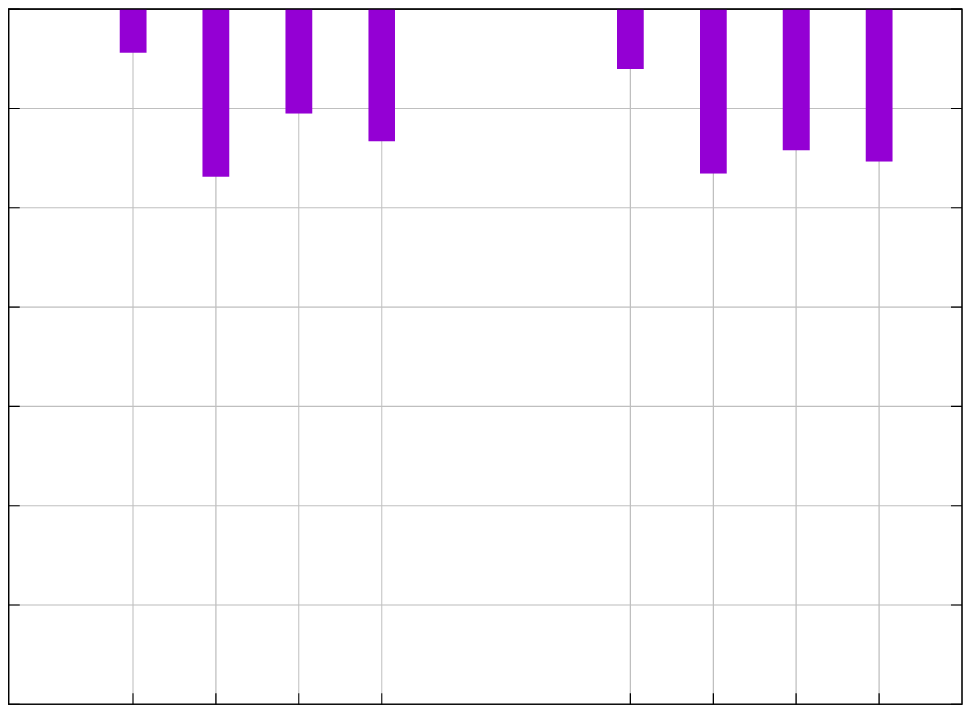}}} &{\LARGE\resizebox{0.45\columnwidth}{!}{\input{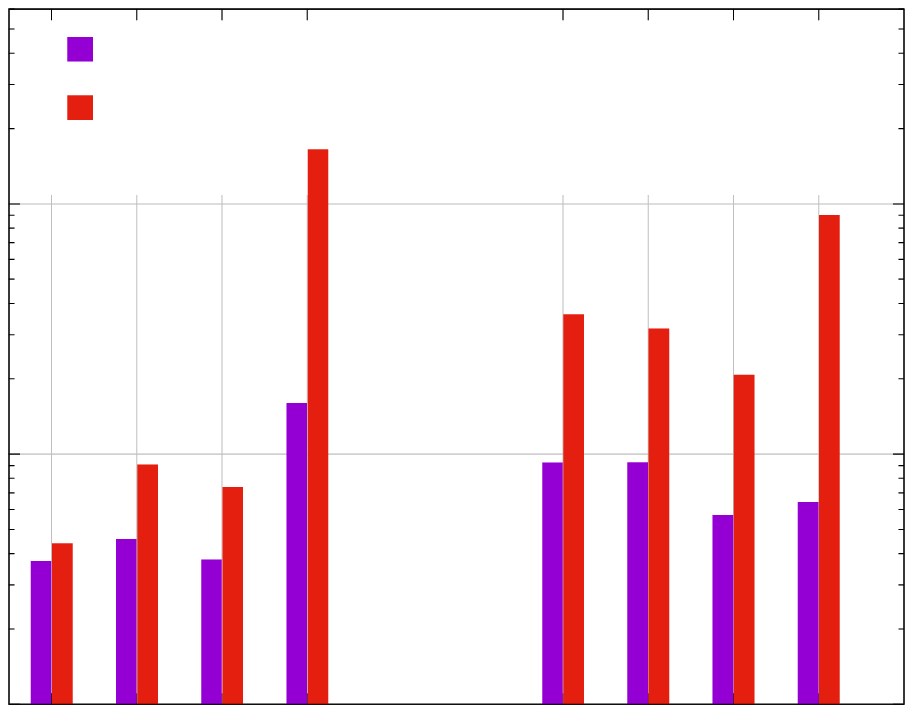}}} \\
& \\
{\scriptsize (a) runtime reduction} & \hspace*{-0.5cm}{\scriptsize (b) error}
\end{tabular}
\caption{Burgers' example: Reducing the number of full-model right-hand side function evaluations in Full+SVD leads to runtimes that are 2--8\% lower than the runtimes of \abas{}. However, fewer full-model right-hand side function evaluations lead to poorer updates that result in errors of Full+SVD that are up to one order of magnitude higher than the errors of \abas{}.}
\label{fig:Burgers:SVDLessDimN}
\end{figure}

\subsection{Combustion model}
\label{sec:NumExp:Combustion}
In this section, we apply \abas{} to a quasi-1D version of a single-element model rocket combustor. The model we use has been developed in the works \cite{smith_computational_2008,frezzotti_numerical_2017,FREZZOTTI2018261}. The goal is to approximate the growth of the amplitude of pressure oscillations at a monitoring point, which provides critical insights for designing engines that avoid combustion instabilities and unbounded growth of the amplitude of the pressure oscillations. The runtime results are obtained on compute nodes with Intel Xeon E5-1660v4 with 64GB RAM.

\begin{figure}
\centering
{\LARGE\resizebox{1\columnwidth}{!}{\input{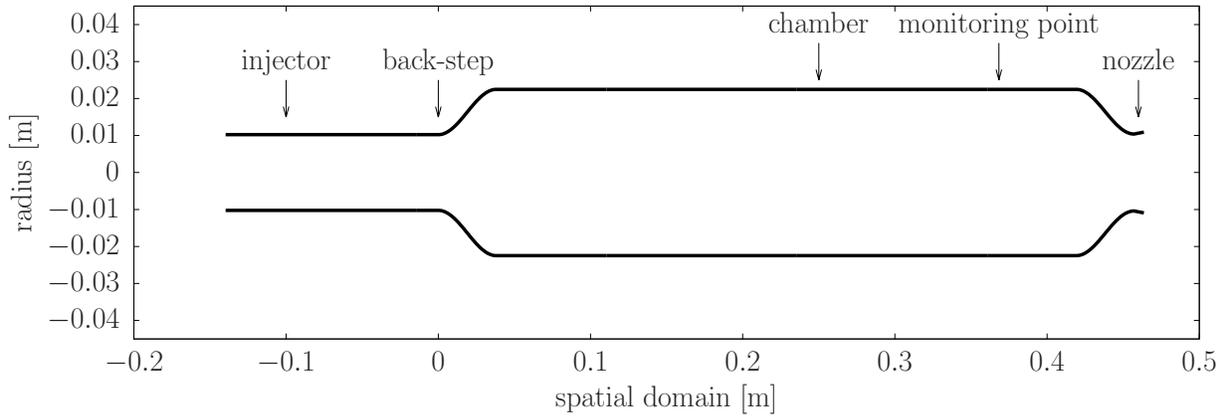}}}
\caption{Combustion example: Geometry of the combustion problem with injector, back-step, combustion camber, and nozzle.}
\label{fig:NumExp:Combustion:Geometry}
\end{figure}

\begin{figure}
\begin{tabular}{cc}
{\LARGE\resizebox{0.45\columnwidth}{!}{\input{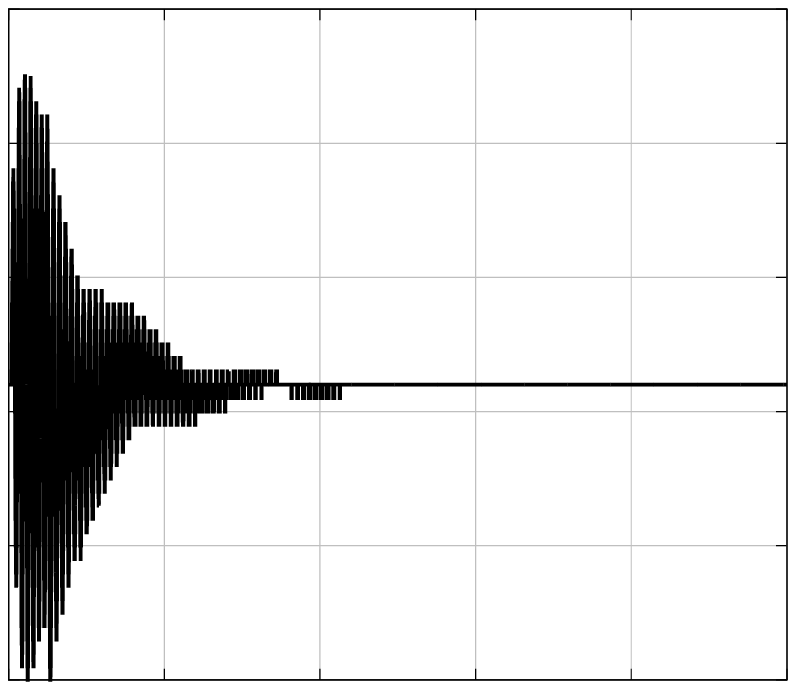}}} & {\LARGE\resizebox{0.45\columnwidth}{!}{\input{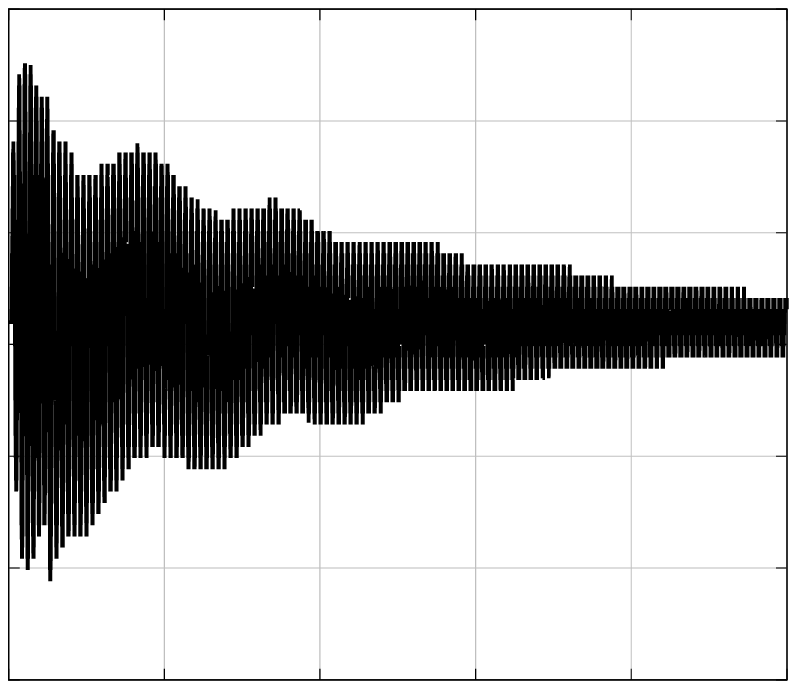}}}\\
{\scriptsize (a) $\mu = 2.4$} & {\scriptsize (b) $\mu = 3.0$}\\
 {\LARGE\resizebox{0.45\columnwidth}{!}{\input{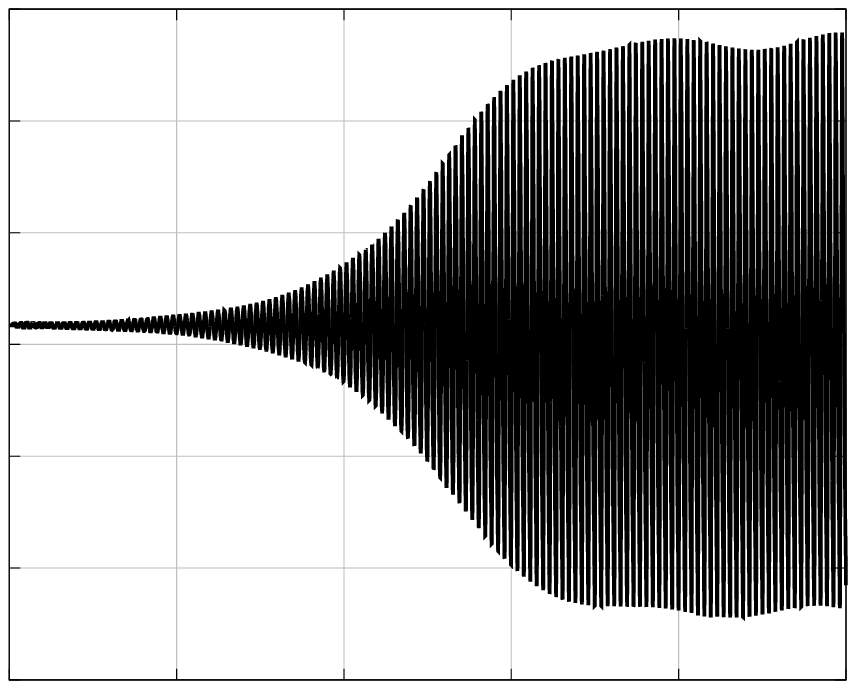}}} & {\LARGE\resizebox{0.45\columnwidth}{!}{\input{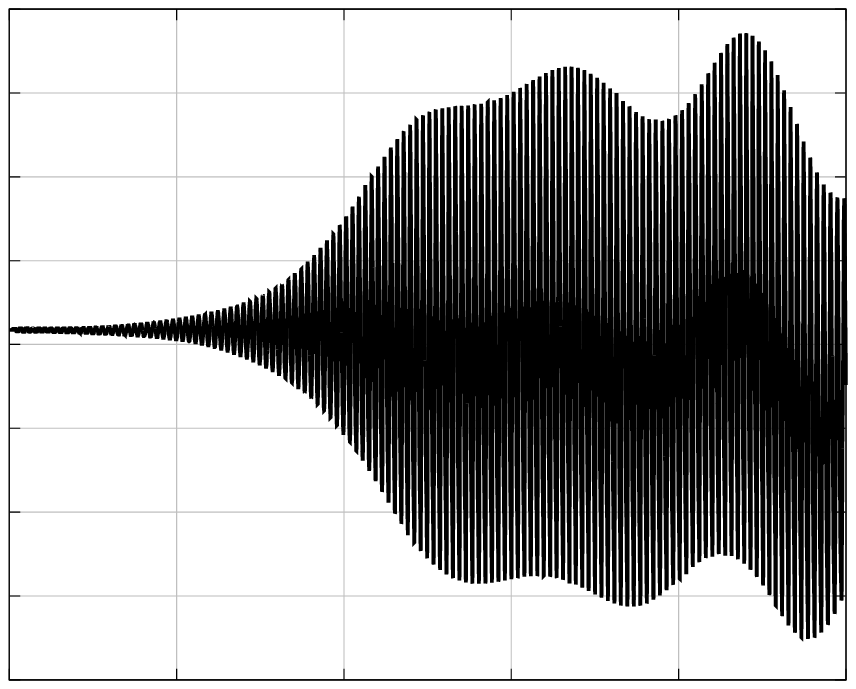}}}\\
{\scriptsize (c) $\mu = 3.8$} & {\scriptsize (d) $\mu = 4.0$}\\
\end{tabular}
\caption{Combustion example: The plots report the pressure at the monitoring point (see Figure~\ref{fig:NumExp:Combustion:Geometry}) for four different heat-release parameters. The results indicate that the parameter domain $\Dcal = [2,4.2]$ leads to solutions with significantly different behavior. Note that the solution for $\mu = 3.8$ seems to enter a limit cycle oscillation.}
\label{fig:NumExp:Combustion:FOMSolutions}
\end{figure}

\subsubsection{Problem setup and full model}
The problem setup and full model follows \cite{xu_reduced-order_2017,Wang:255708}. The problem setup consists of three parts, namely the oxidizer post, the combustion chamber, and the exit nozzle, see Figure~\ref{fig:NumExp:Combustion:Geometry}. The oxidizer is induced and meets the fuel at the back-step, where it reacts instantaneously. The combustion products exit the chamber through the nozzle. The combustion follows a one-step reaction model
\[
\mathrm C \mathrm H_4 + 2 \mathrm O_2 \to \mathrm C \mathrm O_2 + 2 \mathrm H_2 \mathrm O\,,
\]
where the fuel is gaseous methane and the oxidizer is a mixture of oxygen and water. Details on the operating conditions are given in \cite[Table~2]{xu_reduced-order_2017}. The governing equations are in conservative form
\[
\partial_t q + \partial_x g = s_A + s_g + s_q(\mu)\,,
\]
where
\[
q = \begin{bmatrix}
\rho A\\
\rho v A\\
\rho E A\\
\rho Y_{\text{ox}} A
\end{bmatrix}\,,\qquad
g = \begin{bmatrix}
\rho v A\\
(\rho v^2 + p)A\\
(\rho E + p)uA\\
\rho u Y_{\text{ox}} A
\end{bmatrix}\,,
\]
where $\rho$ is the density, $v$ is the velocity, $E$ is the total internal energy, $Y_{\text{ox}}$ is the oxidizer mass fraction, $A$ is the cross sectional area of the fuel duct, and $p$ is the pressure. The source terms $s_A$ and $s_g$ are given in \cite[equation~(2)]{xu_reduced-order_2017}. The source term $s_q(\mu) = [0, 0, q^{\prime}(\mu), 0]^T$ models the heat release, where the parameter $\mu \in \Dcal = [2, 4.2] \subset \mathbb{R}$ in $q^{\prime}(\mu)$ controls the amplification of the heat release, see \cite[equation~(5)]{xu_reduced-order_2017}. Following \cite{xu_reduced-order_2017}, a steady-state solution is first obtained by ignoring the source term $s_q(\mu)$. The steady-state solution is then used as initial condition for computing the time-dependent solution that takes the source term $s_q(\mu)$ into account. The initial condition is perturbed to trigger an instability, which depends on the heat-release parameter $\mu$. The spatial domain is discretized on 300 equidistant grid points. There are four degrees of freedom at each grid point (density, velocity, energy, mass fraction), and so the full model has a total of $\nh = 1200$ degrees of freedom. The spatial discretized is a first-order finite-difference scheme. Time is discretized with a fourth-order implicit scheme and time step size $\delta t = 10^{-7}$ with end time $T = 10^{-1}$. Newton's method is used to solve the corresponding system of nonlinear equations at each time step. Jacobians are derived analytically and passed in assembled form to the Newton solver. \textsc{Matlab}'s backslash operator is used to solve the linear system in each Newton step. The Newton scheme takes 10 iterations and uses a line search based on the Armijo condition \cite[p.~33]{WrightNumOpt} with control parameter $10^{-4}$ as recommended in \cite[p.~33]{WrightNumOpt}. The rest of the setup of the Newton scheme is the same as in Section~\ref{sec:NumExp:Burgers}. The same Newton scheme is used in all models. The pressure is monitored at spatial coordinate $x = 0.3683$, see Figure~\ref{fig:NumExp:Combustion:Geometry}. Figure~\ref{fig:NumExp:Combustion:FOMSolutions} shows the pressure at the monitoring point for parameters $\mu \in \{2.4, 3.0, 3.8, 4\}$. The solutions converge to a steady state for $\mu = 2.4$. A limit cycle oscillation is entered for $\mu = 3.8$. A combustion instability is observed for $\mu = 4.0$.

\begin{figure}
\begin{tabular}{cc}
{\LARGE\resizebox{0.45\columnwidth}{!}{\input{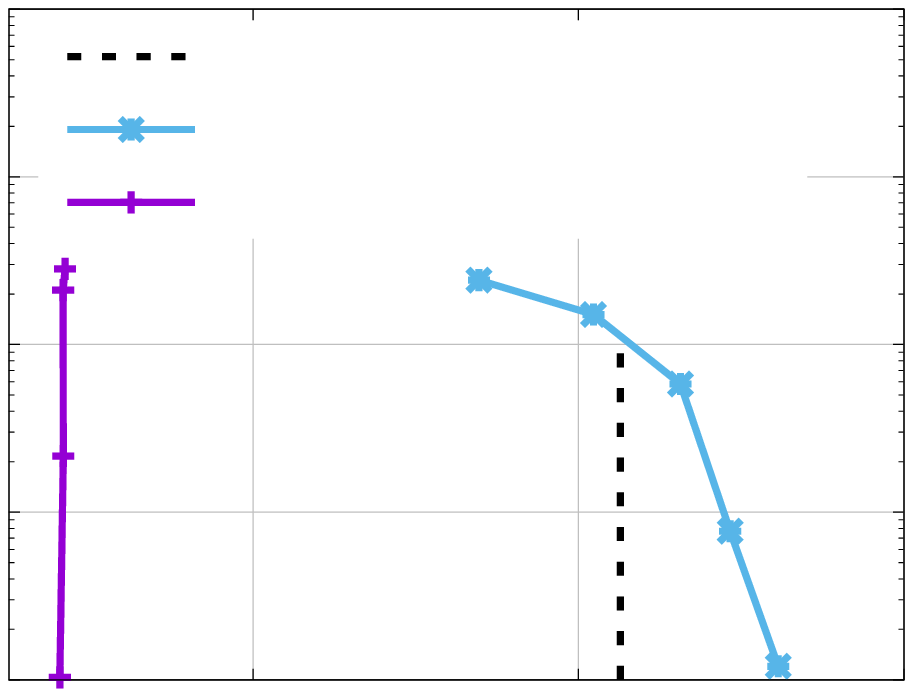}}} &  {\LARGE\resizebox{0.45\columnwidth}{!}{\input{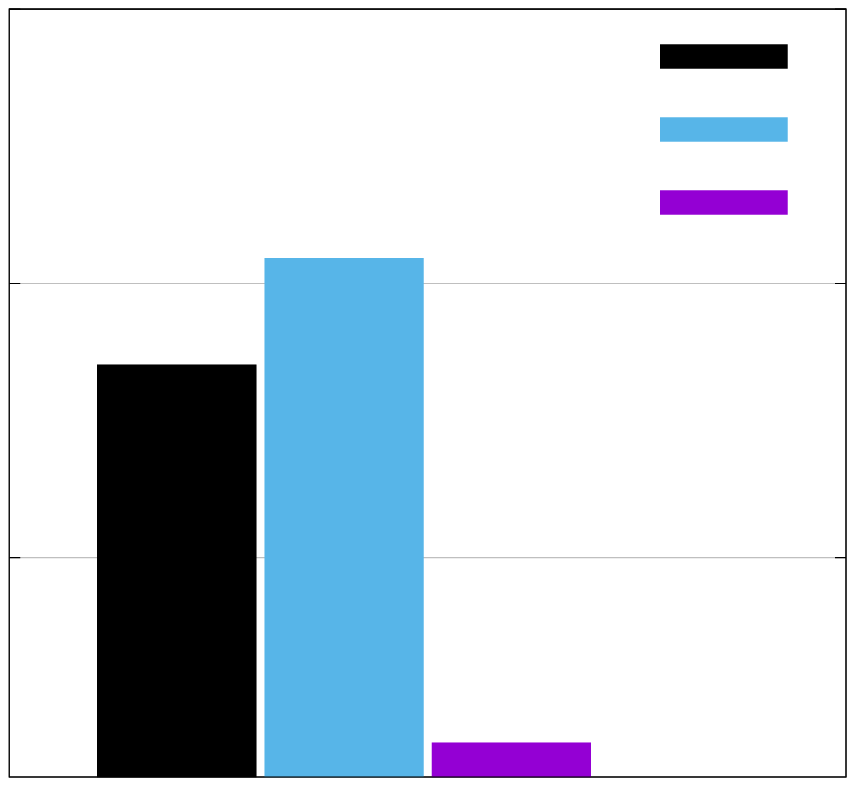}}}\\
{\scriptsize (a) error vs.~runtime} & {\scriptsize (b) runtime to achieve error of about $10^{-4}$}
\end{tabular}
\caption{Combustion example: Plot (a) shows that the static reduced model is even more expensive than the full model in this example. The \abas{} model achieves a speedup of about 6 compared to the full model. Plot (b) visualizes the speedup as a histogram for $\nms = 50$ sampling points in case of the \abas{} model and dimension 225 in case of the static reduced model, so that both reduced models achieve an error of about $10^{-4}$.}
\label{fig:NumExp:Combustion:Speedup}
\end{figure}

\subsubsection{Performance of \abas{}}
We compare static reduced models, \abas{} models, and the full model. The static reduced model is derived from the trajectories corresponding to the parameters $\mu \in \{2, 2.44, 2.88, 3.32, 3.76, 4.2\}$, which are the six equidistant parameters in the parameter domain $\Dcal = [2, 4.2]$. A separate DEIM basis of dimension $\nr$ is computed for each degree of freedom, see, e.g., \cite[Section~2.2]{zhou_model_2012}. The DEIM interpolation points are derived with QDEIM, where $\nr$ points are derived for each of the four DEIM basis and then the union of all four sets of points is used as the set of DEIM interpolation points. The dimension of the \abas{} model is $\nr = 8$ and the window size is $\win = \nr + 1$ in the following. The \abas{} model is initialized with the full-model states obtained until $t = 1.6 \times 10^{-4}$, where only every $50$-th state is used so that $\winit = 4 \times \nr = 32$. The sampling points are derived for each degree of freedom separately, then they are ranked by how often each sampling point has been selected, and then the same $\nms$ sampling points are used for all four bases that have been selected the most. This means that a total of $4 \times \nms$ components of the full-model residual are computed in each iteration. The basis is adapted at every time step and the sampling points are adapted very other time step ($\nz = 2$).
We measure the error of the pressure at the monitoring point. Let $\bfy(\mu) = [y_1(\mu), \dots, y_K(\mu)]^T \in \mathbb{R}^{K}$ be the trajectory of the pressure at the monitoring point computed with the full model. Then, we measure the error
\begin{equation}
\operatorname{err}(\tilde{\bfy}(\mu)) = \frac{\|\tilde{\bfy}(\mu) - \bfy(\mu)\|_2}{\|\bfy(\mu)\|_2}\,,
\label{eq:NumExp:Combustion:Error}
\end{equation}
where $\tilde{\bfy}(\mu)$ is the trajectory computed with either a static or an AADEIM model. The rest of the setup is the same as in Section~\ref{sec:NumExp:Burgers}.

Figure~\ref{fig:NumExp:Combustion:Speedup} reports the error \eqref{eq:NumExp:Combustion:Error} of the reduced models and their runtime. The error and runtime of the static reduced model is plotted for $\nr \in \{60, 70, 80, 90, 100\}$. The results for the \abas{} model are reported for $\nms \in \{20, 30, 40, 50\}$ and $\nr = 8$. The parameter is set to $\mu = 3.8$. The \abas{} model achieves a speedup of about 6 compared to the full model in this example. The static reduced model is slower than the full model. Figure~\ref{fig:NumExp:Combustion:ParameterSweep}a shows the speedup of the \abas{} model with $\nms = 50$ and for a parameter sweep over $\mu \in \{2.4, 3.0, 3.8, 4.0\}$. The dimension of the static reduced model is $\nr = 225$. The static and the \abas{} model achieve about the same error for all four parameters. Note that the four parameters lead to significantly different behaviors in the solutions, see Figure~\ref{fig:NumExp:Combustion:FOMSolutions}. The \abas{} model achieves a significant speedup compared to the full model, whereas the static reduced model is slower than the full model. Figure~\ref{fig:NumExp:Combustion:ParameterSweep}b demonstrates that the proposed adaptive sampling scheme is orders of magnitude more efficient than uniform sampling without replacement. Uniform sampling requires at least $\nms = 175$ sampling points per degree of freedom to prevent the Newton method from diverging and to achieve an error \eqref{eq:NumExp:Combustion:Error} of about $10^{-2}$. With the proposed adaptive sampling scheme, the \abas{} approach achieves an error \eqref{eq:NumExp:Combustion:Error} of about $10^{-4}$ with $\nms = 50$ sampling points per degree of freedom.

\begin{figure}
\begin{tabular}{cc}
   {\LARGE\resizebox{0.45\columnwidth}{!}{\input{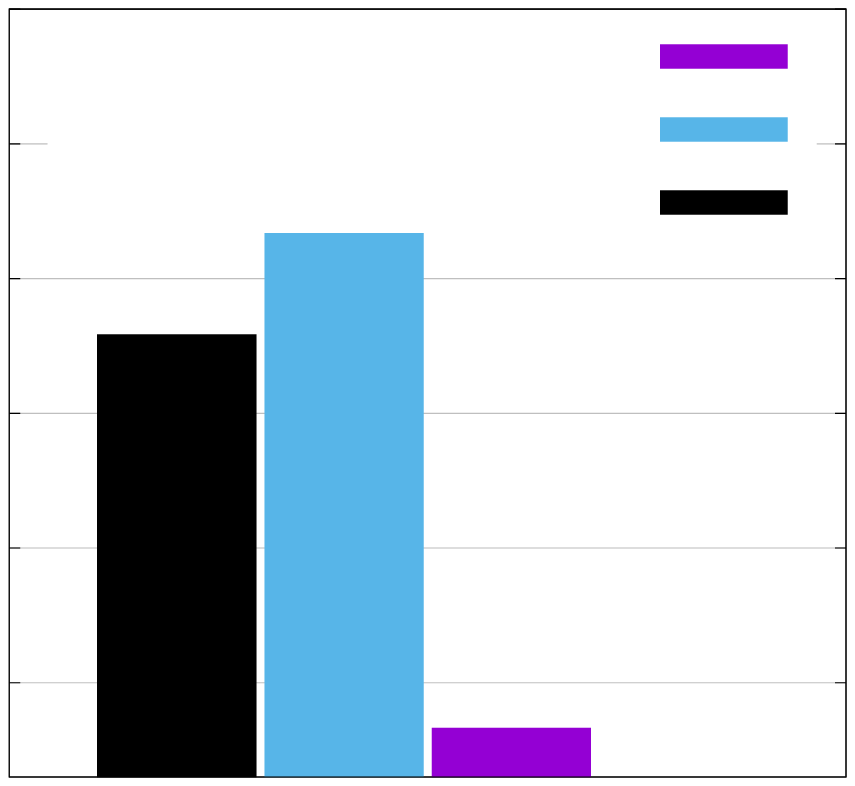}}} & {\LARGE\resizebox{0.45\columnwidth}{!}{\input{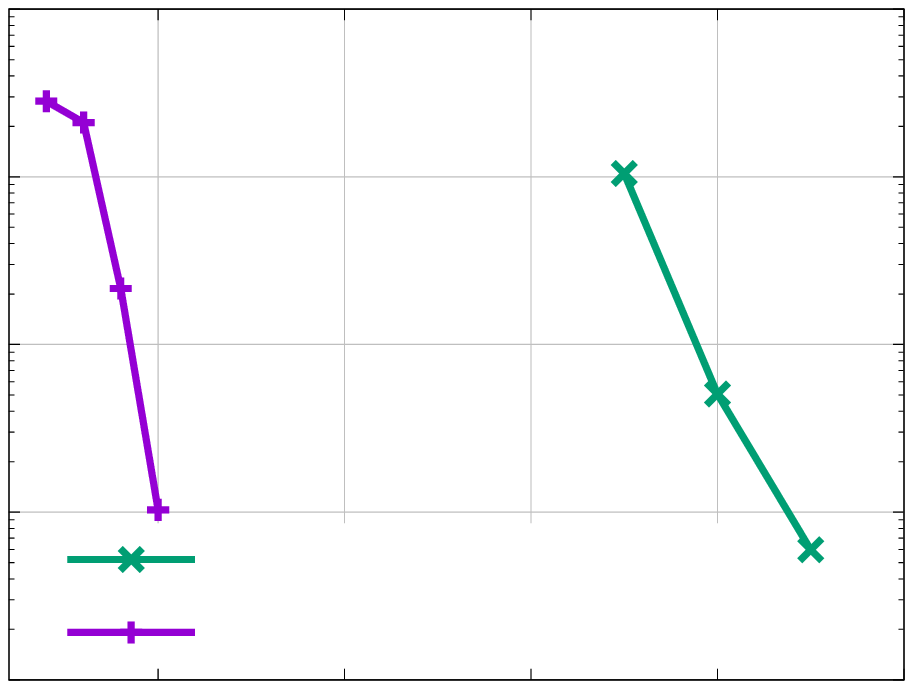}}}\\
{\scriptsize (a) parameter sweep} & {\scriptsize (b) sampling}
\end{tabular}
\caption{Combustion example: Plot (a) shows that the \abas{} model achieves significant speedups compared to the full model for a parameter sweep over the parameter domain $\Dcal = [2, 4.2]$, where solutions show significantly different behavior, cf.~Figure~\ref{fig:NumExp:Combustion:FOMSolutions}. This provides evidence that the proposed \abas{} approach is robust with respect to changes in the parameter. Plot (b) demonstrates that the proposed adaptive sampling scheme achieves orders of magnitude lower errors than uniform sampling. }
\label{fig:NumExp:Combustion:ParameterSweep}
\end{figure}

\section{Conclusions}
\label{sec:Conc}
The proposed \abas{} approach demonstrates that transport-dominated problems have a rich local structure that can be exploited to construct efficient reduced models. We exploit locality in time via adaptive basis updates and locality in space via adaptive sampling. An analysis establishes a connection between the local coherence properties of reduced spaces and the number of samples that are required to adapt the basis. The faster the local coherence decays, the fewer samples are required to adapt the reduced spaces. Reduced models built with \abas{} are implicitly parametrized, which means that there is no offline phase to construct reduced models, rather the basis is adapted online to changes in the parameter. Numerical results demonstrated that \abas{} is applicable to a wide range of problems and faithfully approximates behavior that changes significantly with parameters. At the same time, \abas{} achieves significant runtime speedups compared to full and traditional, static reduced models.

There are many directions for future work. First, the dimension of the reduced space, as well as the window size, can be adapted over time. For example, in problems with strongly time-varying coefficients or transport behavior that changes over time, time-varying windows and time-varying dimensions of the reduced spaces could help to further reduce the runtime. Second, the proposed approach relates the number of sampling points for the basis updates to the locality of the moving coherent structure. It would be interesting to compare to other adaptive basis schemes, e.g., incremental SVDs \cite{BRAND200620}, and to establish how many sampling points are required per adaptation step.

\section*{Acknowledgments}
The author would like to thank Nina Beranek (University of Ulm, Germany) for carefully reading an earlier version of this manuscript and for reporting typos and helpful comments.

\bibliography{convdeim}
\bibliographystyle{abbrv}

\appendix
\section{Analytic example with local low-rank structure}
\label{sec:Appx:Example}
We analyze analytically an example to demonstrate the local low-rank structure of transport-dominated problems. The example follows \cite[Example~2.5]{Tommasao}. Consider the function
\begin{equation}
q(x, t) = \begin{cases}
0\,, & \qquad x \leq t\,,\\
1\,, & \qquad x > t\,,
\end{cases}
\label{eq:ABAS:LocalExpQDef}
\end{equation}
in the spatial domain $x \in [-5, 5]$ and $t \in [0, T]$ with $T = 1$. There is no $\nr$-dimensional Lagrangian space $\Ucal$ for which the  error $\sup_{t \in [0, T]}\inf_{q^* \in \Ucal} \|q(\cdot, t) - q^*\|_{L^2(-5, 5)}$ decays faster than linearly in $1/\sqrt{\nr}$, which means there is no space spanned by $q(\cdot, t_1), \dots, q(\cdot, t_{\nr})$ with $\nr$ pairwise distinct $t_i \in [0, T]$ with $i = 1, \dots, \nr$ that achieves a faster error decay than $1/\sqrt{\nr}$. Instead of considering the whole time domain $[0, T]$, let us now consider $[0, T/\zeta]$ with $\zeta > 0$. Let $\Ucal_{\zeta}$ be the space spanned by $q(\cdot, t_i)$ with $t_i = ih$ for $i = 1, \dots, \nr$ and $h = (T/\zeta)/\nr$. For $t \in [0, T/\zeta]$, the space $\Ucal_{\zeta}$ achieves
\[
\inf_{q^* \in \Ucal_{\zeta}} \|q(\cdot, t) - q^*\|_{L^2(-5, 5)} = \begin{cases}
\sqrt{t_1 - t}\,,&\qquad t < t_1\,,\\
\sqrt{\frac{(t_{i + 1} - t)(t - t_i)}{t_{i + 1} - t_i}}\,,&\qquad t_i \leq t \leq t_{i + 1}\,,i = 1, \dots, \nr - 1\,,
\end{cases}
\]
and thus $\sup_{t \in [0, T/\zeta]} \inf_{q^* \in \Ucal_{\zeta}} \|q(\cdot, t) - q^*\|_{L^2(-5, 5)} \leq \sqrt{h} = \sqrt{T/(\zeta \nr)}$. The rate of the error decay can be increased by letting $\zeta$ depend on $\nr$. For example, setting $\zeta = \mathrm e^{\nr}$ gives $\sup_{t \in [0, T/\zeta]} \inf_{q^* \in \Ucal_{\zeta}} \|q(\cdot, t) - q^*\|_{L^2(-5,5)} \in \mathcal{O}(\mathrm e^{-\nr})$, which shows that a local low-rank structure can be recovered if \eqref{eq:ABAS:LocalExpQDef} is approximated locally in time.

\section{Helper functions for Algorithm~\ref{alg:ABAS}}
\label{app:Code}

\begin{algorithm}[h]
\caption{Interpolation points selection with QDEIM}\label{alg:qdeim}
\begin{algorithmic}[1]
\Procedure{QDEIM}{$\bfU$}[See reference \cite{QDEIM}]
\State $[\sim, \sim, \bfP] = \texttt{qr}(\bfU^T, '\text{vector}');$
\State $\bfP = \bfP[1:\texttt{size}(\bfU, 2)]$\\
\Return $\bfP$
\EndProcedure
\end{algorithmic}
\end{algorithm}

\begin{algorithm}[h]
\caption{Adaptation with ADEIM}\label{alg:adeim}
\begin{algorithmic}[1]
\Procedure{ADEIM}{$\bfU, \bfP, \bfS, \bfF_p, \bfF_S, \nab$}[See reference \cite{Peherstorfer15aDEIM}]
\State $\bfC = \bfU[\bfP, :]\backslash\bfF_p$ \Comment{Coefficients w.r.t.~interpolation points}
\State $\bfR = \bfU[\bfS, :]\bfC - \bfF_s$ \Comment{Residual at sampling points}
\State $[\sim, \boldsymbol{Sv}, \boldsymbol{Sr}] = \texttt{svd}(\bfR, 0)$ \Comment{Compute SVD of residual}
\State $\boldsymbol{Sv} = \texttt{diag}(\boldsymbol{Sv})$
\State $(\bfC^T)^+ = \texttt{pinv}(\bfC^T)$ \Comment{Pseudo inverse of $\bfC^T$}
\State $\nab = \texttt{min}([\nab\,, \texttt{length}(\textbf{Sv})])$ \Comment{Determine rank of update}
\For{$i = 1, \dots, \nab$} \Comment{Apply updates}
\State $\bfalpha = -\bfR\boldsymbol{Sr}[:, i]$
\State $\bfbeta = (\bfC^T)^+\boldsymbol{Sr}[:, i]$
\State $\bfU[\bfS, :] = \bfU[\bfS, :] + \bfalpha\bfbeta^T$
\EndFor
\State Orthogonalize $\bfU$ \Comment{Orthogonalize columns of $\bfU$}
\State $\bfP = \texttt{qdeim}(\bfU)$ \Comment{Recompute QDEIM interpolation points}\\
\Return $\bfU, \bfP$
\EndProcedure
\end{algorithmic}
\end{algorithm}

\end{document}